\theoremstyle{theorem}
\newtheorem{theorem}{Theorem}[section]
\theoremstyle{definition}
\newtheorem{corollary}[theorem]{Corollary}
\theoremstyle{definition}
\newtheorem{proposition}[theorem]{Proposition}
\theoremstyle{definition}
\newtheorem{lemma}[theorem]{Lemma}
\theoremstyle{definition}
\newtheorem{definition}[theorem]{Definition}
\theoremstyle{definition}
\theoremstyle{definition}
\newtheorem{remark}[theorem]{Remark}
\theoremstyle{definition}
\theoremstyle{definition}
\theoremstyle{definition}
\renewcommand{\Vec}{\mathrm{Vec}}
\newcommand{\End}{\text{End}}
\newcommand{\Flag}{\mathrm{Flag}}
\newcommand{\bfG}{\mathbf{G}}
\newcommand{\bfU}{\mathbf{U}}
\newcommand{\bfV}{\mathbf{V}}
\newcommand{\bfW}{\mathbf{W}}
\newcommand{\scrU}{\mathscr{U}}
\newcommand{\scrD}{\mathscr{D}}
\newcommand{\C}{\mathbf{C}}
\newcommand{\N}{\mathbf{N}}
\renewcommand{\P}{\mathbf{P}}
\newcommand{\GL}{\mathbf{GL}}
\newcommand{\frakS}{\mathfrak{S}}
\newcommand{\FI}{\mathbf{FI}}
\newcommand{\FB}{\mathbf{FB}}
\newcommand{\bfS}{\textbf{S}}
\newcommand{\bfM}{\textbf{M}}
\newcommand{\bfI}{\textbf{I}}
\newcommand{\bfJ}{\textbf{J}}
\newcommand{\bfL}{\textbf{L}}
\newcommand{\bfP}{\textbf{P}}
\newcommand{\bfQ}{\textbf{Q}}
\newcommand{\id}{\text{id}}
\newcommand{\Rep}{\text{Rep}}
\newcommand{\Hom}{\text{Hom}}
\newcommand{\Mod}{\text{Mod}}
\newcommand{\Ext}{\text{Ext}}
\newcommand{\interior}[1]{%
  {\kern0pt#1}^{\mathrm{o}}%
}
\DeclarePairedDelimiter\abs{\lvert}{\rvert}
\title{Polynomial functors on flags}
\author{Teresa Yu}
\address{Department of Mathematics, University of Michigan, Ann Arbor, MI}
\email{\href{mailto:twyu@umich.edu}{twyu@umich.edu}}
\urladdr{\url{https://sites.google.com/view/teresayu}}
\thanks{TY was supported by NSF grant DGE-1841052.}
\begin{document}
\maketitle

\begin{abstract}
    We study generalizations of Schur functors from categories consisting of flags of vector spaces. We give different descriptions of the category of such functors in terms of representations of certain combinatorial categories and infinite rank groups, and we apply these descriptions to study polynomial representations and representation stability of parabolic subgroups of general linear groups.
\end{abstract}

\section{Introduction}

Schur functors and polynomial functors are fundamental objects due to their ability to translate properties across fields of math such as representation theory, commutative algebra, algebraic geometry, and combinatorics. There has been recent interest in studying variants of Schur functors from categories of linear algebraic data, such as the category whose objects are vector spaces with symmetric bilinear forms; these variants provide a new perspective for studying representations of classical groups \cites{Pa,PS,SSschurgen}. In this paper, we take a similar approach by studying generalizations of Schur functors from categories where the linear algebraic data consists of flags of vector spaces, and we use these functors to study representations of parabolic subgroups of general linear groups. 

\subsection{Main result}
In order to state our main result, we introduce three categories. Each will be given the structure of a \emph{tensor} category, meaning it is linear and symmetric monoidal. Fix $n\geq 1$.

Let $\Flag_n$ be the following category: an object $\{V_i\}$ is a finite dimensional complex vector space $V$ equipped with a flag $0=V_0\subset V_1\subset\cdots\subset V_n=V$ of length $n$, and a morphism $f:V\to W$ is a linear map such that $f(V_i)\subset W_i$ for $i\in[n]=\{1,\ldots,n\}$. For $a\geq 0$, there is a functor $F_a:\Flag_n\to\Vec$ given by $F_a(\{V_i\})=V^{\otimes a}$, where $\Vec$ denotes the category of finite dimensional complex vector spaces. We say a functor $\Flag_n\to\Vec$ is \emph{polynomial} if it is a subquotient of a finite direct sum of $F_a$'s. The tensor product of such functors is defined by tensoring their values, and it is again polynomial.

Let $\GL=\bigcup_{i=1}^\infty\GL_i(\C)$ be the infinite general linear group, and let $\bfV$ be the defining representation. Equip $\bfV$ with a flag of length $n$ where each graded piece is also infinite dimensional. Let $\bfG$ be the subgroup of $\GL$ preserving this flag; this group can be identified with the group of invertible $n\times n$ block upper triangular matrices, where each block is an element of $\End(\C^\infty)=\bigcup_{i=1}^\infty\End(\C^i)$ and blocks along the diagonal are elements of $\GL$. We call $\bfV$ along with the flag the \emph{standard representation} of $\bfG$, and say a representation of $\bfG$ is \emph{polynomial} if it appears as a subquotient of a finite direct sum of tensor powers of $\bfV$. The tensor product of polynomial representations as complex vector spaces is again polynomial.

Let $\mathscr{U}$ be the following category: an object is a finite set $S$ equipped with a weight function $S\to [n]$, and a morphism $f:S\to T$ is a bijection such that weights are non-decreasing. A \emph{module} over $\mathscr{U}$ is a functor $\mathscr{U}\to\Vec$ of finite length, meaning its value at any object of $\scrU$ is a finite dimensional vector space and the functor is supported on finitely many objects of $\scrU$ (up to isomorphism). A morphism of $\mathscr{U}$-modules is a natural transformation of functors, and the tensor product of $\scrU$-modules is defined by Day convolution (see \S\ref{subsec:Umod tens} for a precise statement).

Our main result is the following.

\begin{theorem}\label{thm:main}
    There is an equivalence among the following tensor categories:
    \begin{itemize}
        \item the category of polynomial functors $\Flag_n\to\Vec$;
        \item the category of polynomial representations of $\bfG$;
        \item the category of $\mathscr{U}$-modules.
    \end{itemize}
\end{theorem}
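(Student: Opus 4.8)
The plan is to prove the theorem by exhibiting explicit tensor functors among the three categories and checking, by a Schur--Weyl–type computation, that they are equivalences; the case $n=1$, where it specializes to the classical equivalences among polynomial functors $\Vec\to\Vec$, polynomial representations of $\GL$, and $\FB$-modules, is a useful guide. Concretely, I would use a "Schur functor" construction $\Phi$ from $\scrU$-modules to polynomial functors on $\Flag_n$, together with evaluation $\mathrm{ev}_{\bfV}\colon P\mapsto P(\bfV)$ from polynomial functors to polynomial representations of $\bfG$ (regarding $\bfV$ with its flag as an ind-object of $\Flag_n$, the filtered colimit of its finite flagged subspaces, on which $\bfG$ acts by flag-preserving automorphisms). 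Both functors are visibly exact and symmetric monoidal, so it remains to prove that each is an equivalence of abelian categories; composing then gives the result.

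For $\Phi$, set
\[
\Phi(M)(\{V_i\}) \;=\; \int^{(S,w)\in\scrU} M(S,w)\otimes\bigotimes_{s\in S} V_{n+1-w(s)},
\]
a coend over $\scrU$; the factor $(S,w)\mapsto\bigotimes_{s} V_{n+1-w(s)}$ is covariant in $\{V_i\}$ and contravariant in $\scrU$ (a weight-raising bijection $\phi$ induces inclusions $V_{n+1-w'(\phi(s))}\hookrightarrow V_{n+1-w(s)}$ in the reverse direction), so the coend makes sense, and it is a subquotient of a finite direct sum of $F_a$'s since $M$ has finite length and $\bigotimes_s V_{n+1-w(s)}\subseteq V^{\otimes|S|}$. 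By co-Yoneda, $\Phi$ sends the representable projective $P_c=\C[\Hom_\scrU(c,-)]$ to the flag monomial functor $T_c\colon\{V_i\}\mapsto\bigotimes_i V_{n+1-i}^{\otimes a_i}$, where $a_i$ is the number of weight-$i$ elements of $c$; in particular $F_a=T_c$ for $c$ of constant weight $1$. A quasi-inverse $\Psi$ is the "multilinear part on split flags" functor: $\Psi(Q)(S,w)$ is the $(1,\dots,1)$-weight space of $Q$ evaluated on the split flag of shape $(a_1,\dots,a_n)$, which carries an action of $\prod_i\frakS_{a_i}=\Aut_\scrU(S,w)$, with the weight-raising morphisms of $\scrU$ acting through degenerations of split flags. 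The isomorphism $\Psi\Phi\cong\id$ is essentially co-Yoneda. The isomorphism $\Phi\Psi\cong\id$ is the substantive point: since every finite flagged vector space is (noncanonically) isomorphic to its associated graded, a polynomial functor on $\Flag_n$ is the same as a polynomial functor of $n$-tuples of vector spaces with block-triangular linear maps, and one must show such a functor is reconstructed from its multihomogeneous pieces—this reduces, using semisimplicity of polynomial $\prod_i\GL$-representations together with a polarization/restitution identity accounting for the triangular morphisms, to a combinatorial bookkeeping that matches exactly the data of a $\scrU$-module. Compatibility of Day convolution with the pointwise tensor product follows from Fubini for coends and $\bigotimes_{s\in S\sqcup T}U_s=\bigl(\bigotimes_{s\in S}U_s\bigr)\otimes\bigl(\bigotimes_{s\in T}U_s\bigr)$.

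For $\mathrm{ev}_\bfV$: it is faithful (a nonzero natural transformation is nonzero on some finite flag, which embeds as a flag-preserving direct summand into a suitable balanced finite subflag of $\bfV$, since finite flags split), it is exact (filtered colimits of exact evaluations), and it sends $T_c$ to $\bigotimes_s\bfV_{n+1-w_c(s)}$, in particular $F_a\mapsto\bfV^{\otimes a}$. For essential surjectivity, the $\bfV^{\otimes a}$ generate polynomial $\bfG$-representations by definition, and the image is closed under subquotients: a polynomial $\bfG$-representation extends to a representation of the monoid of \emph{all} flag-preserving endomorphisms of $\bfV$ (polynomial representations of the relevant finite matrix monoids extend those of the corresponding $\GL$'s), so any $\bfG$-subrepresentation $Z\subseteq P(\bfV)$ is stable under this monoid and therefore equals the value at $\bfV$ of the subfunctor $\{V_i\}\mapsto\bigcap_h (P(h))^{-1}(Z)$, the intersection over all flag-preserving $h\colon\{V_i\}\to\bfV$. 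Full faithfulness then reduces, via presentations of polynomial functors by sums of the $T_c$'s and a five-lemma argument, to surjectivity of $\mathrm{ev}_\bfV$ on $\Hom(T_c,T_{c'})$; since it is already faithful there, and the flag-order-respecting bijections of tensor factors give a lower bound $\dim\Hom_{\mathrm{PolyFun}}(T_c,T_{c'})\ge\#\{\phi\colon j_s\le k_{\phi(s)}\}$ equal to $\dim\Hom_{\scrU\text{-mod}}(P_c,P_{c'})=\#\Hom_\scrU(c',c)$, everything collapses once one proves the matching upper bound
\[
\dim\Hom_\bfG\!\Big(\bigotimes_{s}\bfV_{j_s},\ \bigotimes_{t}\bfV_{k_t}\Big)\ \le\ \#\{\text{bijections }\phi\text{ of factors with }j_s\le k_{\phi(s)}\ \forall s\},
\]
where $T_c(\bfV)=\bigotimes_s\bfV_{j_s}$ and $T_{c'}(\bfV)=\bigotimes_t\bfV_{k_t}$.

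This last inequality is the main obstacle, and it is where the new phenomenon relative to the classical case lies: $\bfG$ is a parabolic, not reductive, so this is not ordinary Schur--Weyl duality. I would prove it by exploiting that, as a $\bfG$-representation, $\bfV_j$ is uniserial with composition factors $\bfV_1/\bfV_0,\dots,\bfV_j/\bfV_{j-1}$ (socle $\bfV_1$), that each graded piece $\bfV_i/\bfV_{i-1}$ is the standard representation of the $i$-th factor of the Levi $L=\prod_{i=1}^n\GL$ inflated along $\bfG\twoheadrightarrow L$ (so $\Hom_\bfG$ between inflated modules is computed by ordinary Schur--Weyl for $L$), and then inducting along the induced filtration of $\bigotimes_s\bfV_{j_s}$, showing the unipotent radical $N$ forces precisely the inequalities $j_s\le k_{\phi(s)}$ and no more. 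The non-semisimplicity of polynomial $\bfG$-representations—for instance $\bfV_1\otimes\bfV\subset\bfV\otimes\bfV$ is a non-split subobject—is exactly what the non-invertible, weight-raising morphisms of $\scrU$ encode, and controlling it carefully is the heart of the argument; the remaining ingredients (the coend/colimit formalism, the monoid-extension step, and the compatibility with the symmetric monoidal structures on all three sides) are routine but should be written out with some care.
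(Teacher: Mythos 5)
Your architecture is genuinely different from the paper's. The paper builds $\Mod_\scrU\cong\Rep(\bfG)$ directly via a structured tensor product (an inverse limit) against the \emph{injective} objects $T_{\underline{a}}=\bigotimes_i(\bfV/\bfV_{i-1})^{\otimes a_i}$, invoking a general criterion of Sam--Snowden once it has computed $\Hom_\bfG(T_{\underline{a}},T_{\underline{b}})=\C[\Hom_\scrU(\underline{a},\underline{b})]$ and the socle of $T_{\underline{a}}$ (Lemma~\ref{lem:homG Ta} and Corollary~\ref{cor:G socle Ta}); it then proves $\mathscr{A}\cong\Rep(\bfG)$ separately via the adjoint pair (direct limit over $\C^{(j^n)}$, $H_{\underline{a}}$-invariants). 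You instead work on the \emph{projective} side ($\bigotimes_i\bfV_i^{\otimes a_i}$, coends rather than limits) and factor through $\mathscr{A}$. Your key inequality is exactly the dual of Lemma~\ref{lem:homG Ta}(1) (your count of bijections with $j_s\le k_{\phi(s)}$ matches $\#\Hom_\scrU(\tau(\underline{b}),\tau(\underline{a}))$), and the proposed proof via the uniserial filtration of $\bfV_j$ and Schur--Weyl for the Levi is a plausible substitute for the paper's explicit generator-and-weight computation; your monoid-stability argument for closure of the essential image under subquotients parallels the paper's use of $\End(\bfV)$.

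Two steps as written would not go through, however. First, reducing fullness of $\mathrm{ev}_{\bfV}$ to surjectivity on $\Hom(T_c,T_{c'})$ ``via presentations and the five lemma'' requires lifting a map $\mathrm{ev}_{\bfV}(P_0)\to\mathrm{ev}_{\bfV}(F')$ through a presentation of $F'$, i.e.\ it needs the images $\bigotimes_s\bfV_{j_s}$ to be projective in $\Rep(\bfG)$ --- but projectivity of the $U_{\underline{a}}$ is one of the \emph{consequences} the paper derives from the theorem, not an available input, so this is circular as stated. Your own subfunctor construction gives a non-circular fix: realize the graph of a map $\mathrm{ev}_{\bfV}(F)\to\mathrm{ev}_{\bfV}(F')$ as $\mathrm{ev}_{\bfV}(H)$ for a subfunctor $H\subset F\oplus F'$ and use that a faithful exact functor reflects isomorphisms. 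Second, for $n\ge 2$ the category $\scrU$ is not a groupoid, so the coend defining $\Phi$ is only right exact a priori, and $\Phi\Psi\cong\id$ --- that a polynomial functor on $\Flag_n$ is reconstructed from its multilinear parts at split flags together with the degeneration maps --- is the real content of that leg; ``polarization plus combinatorial bookkeeping'' is a placeholder, not an argument, and this is precisely the statement the paper avoids proving directly by routing everything through $\Rep(\bfG)$ and the abstract equivalence criterion. Together with the admitted incompleteness of the $\Hom_\bfG$ upper bound, the proposal correctly locates the difficulties but leaves the two load-bearing verifications unproved.
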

The case with $n=1$ recovers Schur--Weyl duality in the sense of the equivalence among the categories of polynomial functors $\Vec\to \Vec$, polynomial representations of $\GL$, and finite length $\FB$-modules (see \cite{SStca} for an overview of this equivalence). Our result can therefore be seen as a generalization of Schur--Weyl duality for flags.

By studying $\scrU$-modules, we are able to describe many aspects for the other models of the above category. In particular, the category is self-dual (despite not being semisimple), all objects are of finite length and have both finite projective and injective dimensions, and we give descriptions of all indecomposable projective and injective objects. We also are able to compute the first $\Ext$ groups among simple objects and describe left-exact tensor functors from the category.

\subsection{Relation to other work}
\subsubsection{The infinite symmetric group} Let $\FI^{(n)}$ denote the category of $[n]$-weighted finite sets and injections such that the weights are non-decreasing. When $n=1$, this is the classical $\FI$ category \cite{CEF}. An $\FI^{(n)}$-module is a functor $\FI^{(n)}\to\Vec$, and such functors are related to the infinite symmetric group as follows.

Let $R=\C[x_1,x_2,\ldots]$ denote the infinite polynomial ring; it has a natural action of the infinite symmetric group $\frakS_\infty$. There has been recent interest in studying modules over $R$ that are symmetric in the sense that they have a compatible action of $\frakS_\infty$ \cites{LNNRdim,LNNRreg,MR,NSsym}. Let $\mathfrak{h}_n=(x_i^n:i\geq 1)$ be the ideal of $R$ generated by $n^\text{th}$ powers of the variables. There is a natural functor from the category of $\FI^{(n)}$-modules to the category of modules over $R/\mathfrak{h}_{n+1}$ with a compatible $\frakS_\infty$-action. In future work, we plan to apply the results in this paper to analyze the categories of $\FI^{(n)}$-modules and symmetric $R/\mathfrak{h}_n$-modules. Via the results in \cite{NSsym}, understanding the category of symmetric $R/\mathfrak{h}_n$-modules is key to sudying symmetric $R$-modules in general.

\subsubsection{Representation stability}
The idea behind representation stability is to consider sequences of groups that are naturally indexed in some way and to consider compatible representations of these groups. It has been especially fruitful to study these as functors out of categories, such as $\FI$-modules for the study of symmetric groups, $\textbf{VI}$- and $\textbf{VIC}$-modules for the study of general linear groups, and other variants of Schur functors for the study of orthogonal and symplectic groups \cites{CEF,PS,SSschurgen}. 

Representation stability can also be studied using representation theory of infinite rank groups, as done in \cite{SSstabpatterns} with representations of groups such as infinite rank orthogonal and sympletic groups. Such representations have also been studied from the perspective of locally finite Lie algebras \cites{DPS,PSe,PSt}.

\subsubsection{Brauer categories and their representations}
A useful perspective for studying representations of infinite rank groups is through representations of combinatorial or diagrammatic categories. In \cite{SSstabpatterns}, the authors use categories of various types of Brauer diagrams to study representations of different infinite rank groups, such as algebraic representations of $\GL$; many of the arguments and ideas in this paper are similar to those in that paper. The same authors have also begun to develop a more general theory for representations of Brauer categories in \cite{SSBrauerI}.

The $n=2$ case in this paper is particularly similar to the category of algebraic representations of $\GL$, which is equivalent to the category of modules over the upwards walled Brauer diagram category \cite[Corollary 3.2.12]{SSstabpatterns}. Let $\bfW=\bigcup_{d\geq 0}\C^d$ be the standard representation of $\GL$. Algebraic representations of $\GL$ can also be described in terms of certain representations of $\GL(\bfW\oplus\bfW_*)$, where $\bfW_*$ is the restricted dual. Then the Lie algebra corresponding to the unipotent radical of the subgroup of block upper triangular matrices can be identified with $\bfW\otimes\bfW$, while the corresponding Lie algebra in our case can be identified with $\bfW\otimes\bfW_*$. 

\subsubsection{Twisted commutative algebras and their applications} Let $n=1$, and consider the category of all (not necessarily of finite length) functors $\scrU\to\Vec$. Then a commutative algebra object in the category is called a \emph{twisted commutative algebra}; over $\C$, these are also \emph{$\GL$-algebras}. These algebras and their modules have been well-studied \cites{Stcaspec,SSglI,SSglII}, and have also been used to study various notions of tensor rank and to give proofs of Stillman's conjecture in commutative algebra \cites{BDDE,BDES,DLL,ESS}.

\subsection{Outline} The rest of the paper is organized as follows. In \S \ref{sec:rep FBn}, we study the category of $\scrU$-modules, and in \S \ref{sec:rep models}, we study the category of polynomial representations of $\bfG$. In \S \ref{sec:equiv}, we prove Theorem~\ref{thm:main} and explicitly describe the equivalence of categories, as well as give some immediate consequences.

\subsection{Notation and conventions} Fix $n\geq 1$. Throughout, we work over the field $\C$ of complex numbers. We write $\underline{a}=(a_1,\ldots,a_n)\in\N^n$ for an $n$-tuple of nonnegative integers, and write $\abs{\underline{a}}=a_1+\cdots+a_n$; the set of all such tuples is partially ordered by the dominance order. We let $\tau(\underline{a})=(a_n,\ldots,a_1)$ denote the reverse tuple. We write $\underline{\lambda}=(\lambda^1,\ldots,\lambda^n)$ for an $n$-tuple of partitions, where $\lambda^i$ is a partition of $\abs{\lambda^i}$.
\begin{align*}
    \Vec\textbf{: }&\text{the category of finite dimensional vector spaces}\\
    \Flag_n\textbf{: }&\text{the category of finite dimensional vector spaces with flags of length $n$}\\
    \mathscr{A}\textbf{: }&\text{the category of polynomial functors $\Flag_n\to\Vec$}\\
    \bfG\textbf{: }&\text{the subgroup of $\GL$ of invertible $n\times n$ block upper triangular matrices}\\
    \Rep(\bfG)\textbf{: }&\text{the category of polynomial representations of $\bfG$}\\
    \bfL,H_{\underline{a}},G_{\underline{a}}\textbf{: }&\text{subgroups of $\bfG$}\\
    \bfV,\{\bfV_i\}_{i=0}^n,\bfV_{(i)}\textbf{: }&\text{the standard $\bfG$-representation, its natural flag structure, and graded pieces}\\
    \scrU,\scrD\textbf{: }&\text{the categories of $[n]$-weighted finite sets and non-decreasing/increasing bijections}\\
    \Mod_\scrU,\Mod_\scrD\textbf{: }&\text{the categories of $\scrU$- and $\scrD$-modules}\\
    \bfM_{\underline{\lambda}},\P_{\underline{a}},\P_{\underline{\lambda}},\bfI_{\underline{a}},\bfI_{\underline{\lambda}}\textbf{: }&\text{objects of $\Mod_{\scrU}$}\\
    \bfQ_{\underline{a}},\bfQ_{\underline{\lambda}},\bfJ_{\underline{a}},\bfJ_{\underline{\lambda}}\textbf{: }&\text{objects of $\Mod_\scrD$}\\
    S_{\underline{\lambda}}, T_{\underline{a}}, T_{\underline{\lambda}}, K_{\underline{a}},U_{\underline{a}},U_{\underline{\lambda}}\textbf{: }&\text{objects of $\Rep(\bfG)$}\\
    \bfS_{\lambda}(-)\textbf{: }&\text{the Schur functor corresponding to the partition $\lambda$}\\
    \frakS_{\underline{a}}\textbf{: }&\text{the product of symmetric groups $\frakS_{a_1}\times\cdots\times\frakS_{a_n}$}\\
    \C^{\underline{a}}\textbf{: }&\text{the flag $0\subset\C^{a_1}\subset\C^{a_1+a_2}\subset\cdots\subset\C^{\abs{\underline{a}}}$}
\end{align*}

\subsection*{Acknowledgements} The author is grateful to Andrew Snowden for his guidance and for valuable discussions, and to Karthik Ganapathy for helpful discussions and comments. 

\section{Representations of categories of weighted finite sets}\label{sec:rep FBn}

In this section, we study representations of certain combinatorial categories of weighted finite sets. We refer the reader to \cite[\S 2.1]{SSstabpatterns} and \cite[\S 3]{SSBrauerI} for further background on representations of combinatorial categories.

\subsection{Preliminaries}

\begin{definition}
    The category of \emph{$[n]$-weighted finite sets and upwards bijections}, denoted $\scrU$, is the following category:
    \begin{itemize}
        \item The objects are $[n]$-weighted finite sets, i.e., finite sets $S=\bigsqcup_{i=1}^n S_i$ where $S_i$ has weight $i$; an object is denoted by the $n$-tuple $S=(S_1,\ldots,S_n)$.
    \item A morphism $S\to T$ is a bijection $\varphi:S\to T$ of sets such that weights do not decrease, i.e., $\varphi(S_i)\subset\bigsqcup_{j=i}^n T_j$.
    \end{itemize}
\end{definition}

For an $n$-tuple of nonnegative integers $\underline{a}\in\N^n$, there is a weighted finite set $([a_1],\ldots,[a_n])$, where $[a_i]$ has weight $i$. Every object of $\scrU$ is isomorphic to a unique such object. We will use $\underline{a}$ to denote both the tuple of integers and the corresponding weighted finite set; context should clarify any confusion. 

There is also the category of \emph{$[n]$-weighted finite sets and downwards bijections}, denoted $\scrD$. Its definition is the same, except that a morphism is a bijection of finite sets such that weights do not increase, and so $\scrD$ can naturally be identified with $\scrU^{\text{op}}$.

Both $\scrU$ and $\scrD$ have a symmetric monoidal structure $\amalg$ defined by disjoint union: multiplication is defined by $S\amalg T=(S_1\sqcup T_1,\ldots,S_n\sqcup T_n),$ and the empty set $\varnothing$ is the unit object. The automorphism group of an object $S$ in either category is the product of symmetric groups $\text{Aut}(S_1)\times\cdots\times\text{Aut}(S_n)$. 

A \emph{$\scrU$-module} $M$ is a functor $\scrU\to\Vec$ that is supported on finitely many objects of $\scrU$ up to isomorphism. 
For a morphism $\varphi:S\to T$ in $\scrU$, we write $\varphi_*:M(S)\to M(T)$ to denote the linear map $M(\varphi)$. A \emph{morphism} of $\scrU$-modules is a natural transformation of functors. Let $\Mod_\scrU$ denote the category of $\scrU$-modules. This is an abelian category. For $\scrU$-modules $M$ and $N$, we write $\Hom_\scrU(M,N)$ for the space of morphisms $M\to N$. We also write $\Hom_\scrU(S,T)$ for the space of morphisms $S\to T$ for $S,T\in\scrU$. 

Analogous definitions and notation hold for modules over $\scrD$.

\subsection{Duality}

Given an $\scrU$-module $M$, we define the $\scrD$-module $M^\vee$ by $M^\vee(S)=M(S)^*$, where $(-)^*$ denotes the dual vector space. This induces an exact functor $(\Mod_\scrU)^{\text{op}}\to\Mod_\scrD$, which gives an equivalence of categories via the canonical isomorphism $M\to (M^\vee)^\vee$.

There is also a (covariant) equivalence of monoidal categories $\scrU\cong\scrD$ given by changing the weights of elements from $i$ to $n-i+1$:
    \[\tau:S=(S_1,\ldots,S_n)\mapsto(S_n,S_{n-1},\ldots,S_1).\]
    A morphism $\varphi:S\to T$ in $\scrU$ also gives a morphism $(S_n,\ldots,S_1)\to (T_n,\ldots,T_1)$ in $\scrD$, since now $\varphi$ does not increase weights. For an $n$-tuple $\underline{a}$ or $\underline{\lambda}$, we also use $\tau(\underline{a})$ or $\tau(\underline{\lambda})$ to denote the reverse tuple.

As a consequence, we have a covariant equivalence of categories $\Mod_\scrU\cong\Mod_\scrD$ given by the pushforward and pullback of $\tau$:
\[\tau_!:\Mod_\scrU\to\Mod_\scrD,\qquad\tau^*:\Mod_\scrD\to\Mod_\scrU.\] 
If $M$ is a $\scrU$-module, then $\tau_!M$ is defined by $\tau_!M(S_1,\ldots,S_n)=M(S_n,\ldots,S_1)$. If $\varphi:S\to T$ is a morphism in $\scrD$, then the linear map $\tau_!M(S)\to\tau_!M(T)$ is defined to be the map $\varphi_*=M(\varphi):M(S_n,\ldots,S_1)\to M(T_n,\ldots,T_1)$. The pullback $\tau^*$ is defined similarly.

\begin{proposition}\label{prop:ModFBn selfdual}
    The functors $\tau_!,\tau^*$ give an equivalence of categories $\Mod_\scrU\cong\Mod_\scrD$. In particular, this category is self-dual.
\end{proposition}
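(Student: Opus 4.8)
The plan is to verify directly that $\tau_!$ and $\tau^*$ are mutually inverse functors, and then to combine the resulting equivalence $\Mod_\scrU\cong\Mod_\scrD$ with the duality $(\Mod_\scrU)^{\mathrm{op}}\cong\Mod_\scrD$ from the previous paragraph to deduce self-duality of $\Mod_\scrU$.

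First I would check that $\tau_!$ and $\tau^*$ are well-defined functors $\Mod_\scrU\to\Mod_\scrD$ and $\Mod_\scrD\to\Mod_\scrU$. The key observation, already recorded in the excerpt, is that the assignment $(S_1,\ldots,S_n)\mapsto(S_n,\ldots,S_1)$ together with the identity map on underlying sets sends a weight-nondecreasing bijection to a weight-nonincreasing bijection and vice versa, so it defines an isomorphism of categories $\tau\colon\scrU\xrightarrow{\sim}\scrD$ (with inverse again given by reversing the tuple). Since $\tau$ is an isomorphism of categories, its pushforward $\tau_!$ (which here is just precomposition with $\tau^{-1}$, as $\tau$ is invertible) and pullback $\tau^*$ (precomposition with $\tau$) are exact functors between the functor categories; one must also note that $\tau$ and $\tau^{-1}$ preserve isomorphism classes of objects in the obvious way, so finite support is preserved and these functors land in $\Mod_\scrD$ and $\Mod_\scrU$ respectively. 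Then $\tau^*\circ\tau_!$ and $\tau_!\circ\tau^*$ are precomposition with $\tau^{-1}\circ\tau=\mathrm{id}$ and $\tau\circ\tau^{-1}=\mathrm{id}$, hence are the identity functors on the nose (or at worst canonically isomorphic to them, depending on how strictly one treats the reversal as an involution — and reversing a tuple of length $n$ twice is literally the identity, so no coherence subtlety arises). This gives the equivalence $\Mod_\scrU\cong\Mod_\scrD$.

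For the self-duality claim, I would then compose: the duality functor $M\mapsto M^\vee$ gives an equivalence $(\Mod_\scrU)^{\mathrm{op}}\cong\Mod_\scrD$, and $\tau^*$ gives an equivalence $\Mod_\scrD\cong\Mod_\scrU$. Composing yields an equivalence $(\Mod_\scrU)^{\mathrm{op}}\cong\Mod_\scrU$, i.e. $\Mod_\scrU$ is self-dual, explicitly via $M\mapsto\tau^*(M^\vee)$, whose value on $\underline{a}$ is $M(\tau(\underline{a}))^*=M(a_n,\ldots,a_1)^*$. It is also worth recording that this self-duality functor is exact, being a composite of exact functors, and is its own (weak) inverse up to the canonical double-dual isomorphism.

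I do not anticipate a serious obstacle here: the whole content is that reversing an $n$-tuple is an involution that swaps the monotonicity condition defining $\scrU$ with the one defining $\scrD$, which is immediate from the definitions. The only points requiring a modicum of care are (i) confirming that the monoidal structure $\amalg$ is respected, so that the equivalences are monoidal — this follows since disjoint union is applied componentwise and reversing the indexing does not affect it — and (ii) being slightly careful about whether $\tau_!$ and $\tau^*$ are strictly inverse or merely naturally isomorphic; since $\tau$ is a genuine isomorphism of categories (not just an equivalence), they are strict inverses, so even this is clean.
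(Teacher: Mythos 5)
Your proposal is correct and matches the paper's intent exactly: the paper states this proposition without a written proof, relying on the immediately preceding observations that reversal of the weight tuple is an involutive isomorphism $\scrU\cong\scrD$ and that $(-)^\vee$ gives a contravariant equivalence $(\Mod_\scrU)^{\mathrm{op}}\cong\Mod_\scrD$. The details you supply — that $\tau_!$ and $\tau^*$ are strict mutual inverses because reversal is a genuine isomorphism of categories, and that composing with $(-)^\vee$ yields the self-duality $M\mapsto\tau^*(M^\vee)$ — are precisely the implicit argument.
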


\subsection{Simple, projective, and injective modules}

For a partition $\lambda$ with $\abs{\lambda}=m$, let $M_\lambda$ denote the Specht module corresponding to $\lambda$; this is a simple representation of $\frakS_m$. Then, the simple $\scrU$-modules and $\scrD$-modules are indexed by $n$-tuples of partitions $\underline{\lambda}=(\lambda^1,\ldots,\lambda^n)$, and are defined by
\[S\mapsto \begin{cases}
    M_{\lambda^1}\boxtimes\cdots\boxtimes M_{\lambda^n}&\quad\text{if $\abs{\lambda^i}=\abs{S_i}$ for all $i$,}\\
    0&\quad\text{otherwise.}
\end{cases}\]
We let $\bfM_{\underline{\lambda}}$ denote this module, as well as the corresponding irreducible representation of $\frakS_{\underline{a}}=\frakS_{a_1}\times\cdots\times\frakS_{a_n}$, where $a_i=\abs{\lambda^i}$.

We define the $\scrU$-module $\P_{\underline{a}}$, called the \emph{principal projective} at $\underline{a}$, by $\P_{\underline{a}}(T)=\C[\Hom_\scrU(\underline{a},T)].$
We define the $\scrU$-module $\bfI_{\underline{a}}$, called the \emph{principal injective} at $\underline{a}$, by $\bfI_{\underline{a}}(T)=\C[\Hom_{\scrU}(T,\underline{a})]^*.$
Let $M$ be a $\scrU$-module. By \cite[Proposition 3.2]{SSBrauerI}, we have that
\[\Hom_\scrU(\P_{\underline{a}},M)=M(\underline{a}), \qquad \Hom_\scrU(M,\bfI_{\underline{a}})=M(\underline{a})^*.\]
In particular, the principal projectives are indeed projective objects, and the principal injectives are injective objects.

We have that $\frakS_{\underline{a}}$ acts on $\P_{\underline{a}}$ and $\bfI_{\underline{a}}$ by $\scrU$-module automorphisms, and so for a tuple of partitions $\underline{\lambda}$ with $\abs{\lambda^i}=a_i$ for all $i$, we have $\scrU$-modules $\P_{\underline{\lambda}}$ and $\bfI_{\underline{\lambda}}$ defined to be the $\bfM_{\underline{\lambda}}$-isotypic pieces of $\P_{\underline{a}}$ and $\bfI_{\underline{a}}$:
\[\P_{\underline{\lambda}}=\Hom_{\frakS_{\underline{a}}}(\bfM_{\underline{\lambda}},\P_{\underline{a}}),\qquad \bfI_{\underline{\lambda}}=\Hom_{\frakS_{\underline{a}}}(\bfM_{\underline{\lambda}},\bfI_{\underline{a}}).\]

\begin{proposition}\label{prop:fb indecomposable injs}
    The module $\bfI_{\underline{\lambda}}$ is an indecomposable injective, and its socle is $\bfM_{\underline{\lambda}}$.
\end{proposition}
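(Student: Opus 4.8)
The plan is to reduce the whole statement to a computation of the Hom spaces $\Hom_\scrU(\bfM_{\underline\mu},\bfI_{\underline\lambda})$ for all $n$-tuples of partitions $\underline\mu$: once these are known, the socle can be read off at once, indecomposability follows formally, and injectivity will have been obtained along the way. Throughout write $\underline a=(\abs{\lambda^1},\dots,\abs{\lambda^n})$, so that $\bfI_{\underline\lambda}$ is by definition the $\bfM_{\underline\lambda}$-isotypic piece of the principal injective $\bfI_{\underline a}$.

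For injectivity I would first note that, because $\C[\frakS_{\underline a}]$ is semisimple and $\frakS_{\underline a}$ acts on $\bfI_{\underline a}$ by $\scrU$-module automorphisms, the isotypic decomposition can be carried out internally to $\Mod_\scrU$, giving $\bfI_{\underline a}\cong\bigoplus_{\underline\nu}\bfM_{\underline\nu}\otimes_\C\bfI_{\underline\nu}$ as $\scrU$-modules, the sum running over the finitely many $\underline\nu$ with $\abs{\nu^i}=a_i$ for all $i$. Hence $\bfI_{\underline\lambda}$ is a direct summand of the injective object $\bfI_{\underline a}$, so it is injective; it is also of finite length, since $\bfI_{\underline a}$ has finite total dimension.

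For the socle, the key point is that taking $\bfM_{\underline\lambda}$-multiplicity spaces is exact and commutes with $\Hom_\scrU(M,-)$ (again because $\C[\frakS_{\underline a}]$ is semisimple), so for any $n$-tuple of partitions $\underline\mu$,
\[\Hom_\scrU(\bfM_{\underline\mu},\bfI_{\underline\lambda})=\Hom_\scrU\!\big(\bfM_{\underline\mu},\Hom_{\frakS_{\underline a}}(\bfM_{\underline\lambda},\bfI_{\underline a})\big)\cong\Hom_{\frakS_{\underline a}}\!\big(\bfM_{\underline\lambda},\Hom_\scrU(\bfM_{\underline\mu},\bfI_{\underline a})\big)\cong\Hom_{\frakS_{\underline a}}\!\big(\bfM_{\underline\lambda},\bfM_{\underline\mu}(\underline a)^*\big),\]
where the last identification uses the isomorphism $\Hom_\scrU(\bfM_{\underline\mu},\bfI_{\underline a})=\bfM_{\underline\mu}(\underline a)^*$ recalled above together with the fact that it is $\frakS_{\underline a}$-equivariant. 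Now $\bfM_{\underline\mu}(\underline a)$ is zero unless $\abs{\mu^i}=a_i$ for every $i$, in which case it is the irreducible $\frakS_{\underline a}$-representation $\bfM_{\underline\mu}=M_{\mu^1}\boxtimes\cdots\boxtimes M_{\mu^n}$, which is self-dual since irreducible representations of symmetric groups over $\C$ are. Thus by Schur's lemma $\Hom_\scrU(\bfM_{\underline\mu},\bfI_{\underline\lambda})$ is $\C$ when $\underline\mu=\underline\lambda$ and $0$ otherwise. Since the $\bfM_{\underline\mu}$ are exactly the simple $\scrU$-modules and each has endomorphism algebra $\C$, and since $\bfI_{\underline\lambda}$ has finite length, its socle is $\bfM_{\underline\lambda}$.

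Indecomposability is then immediate: $\bfI_{\underline\lambda}$ is nonzero and of finite length with simple socle, so a nontrivial direct-sum decomposition would split the socle into two nonzero summands. The only step that genuinely needs care---and which I expect to be the main, though still routine, point---is the compatibility of the identification $\Hom_\scrU(M,\bfI_{\underline a})\cong M(\underline a)^*$ with the $\frakS_{\underline a}$-actions; this I would verify by unwinding the natural isomorphism of \cite[Proposition 3.2]{SSBrauerI}, noting that the automorphism of $\bfI_{\underline a}$ induced by $g\in\frakS_{\underline a}$ corresponds under that isomorphism to the natural action of $g$ on $\bfI_{\underline a}(\underline a)^*$, and that the passage to $\frakS_{\underline a}$-invariants used in the displayed chain is exact in characteristic $0$.
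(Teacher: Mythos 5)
Your proof is correct and follows essentially the same route as the paper: injectivity from being a summand of $\bfI_{\underline{a}}$, and the adjunction $\Hom_\scrU(M,\bfI_{\underline{a}})\cong M(\underline{a})^*$ combined with semisimplicity of $\C[\frakS_{\underline{a}}]$ to compute Hom spaces into $\bfI_{\underline{\lambda}}$. The only difference is the order of deductions (the paper gets indecomposability from $\End_\scrU(\bfI_{\underline{\lambda}})=\C$ and then identifies the socle, whereas you compute $\Hom_\scrU(\bfM_{\underline{\mu}},\bfI_{\underline{\lambda}})$ for all simples first and deduce indecomposability from the simple socle), which is immaterial.
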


\begin{proof}
    First, observe that $\bfI_{\underline{\lambda}}$ is a direct summand of the injective $\bfI_{\underline{a}}$, and so it is injective as well.

    For a $\scrU$-module $M$, we have that
    \[\Hom_{\scrU}(M,\bfI_{\underline{\lambda}})=\Hom_{\frakS_{\underline{a}}}(M(\underline{a}),\bfM_{\underline{\lambda}}^*).\]
    In particular, taking $M=\bfI_{\underline{\lambda}}$, we see that $\End_{\scrU}(\bfI_{\underline{\lambda}})=\Hom_{\frakS_{\underline{a}}}(\bfM_{\underline{\lambda}},\bfM_{\underline{\lambda}}^*)=\C,$
    where the last equality comes from working over $\C$ so Specht moduls are self-dual. We therefore have that there are no nontrivial idempotent endomorphisms of $\bfI_{\underline{\lambda}}$, and so it is indecomposable.

    To show that $\bfM_{\underline{\lambda}}$ is the socle, it suffices to note that $\bfM_{\underline{\lambda}}\subset\bfI_{\underline{\lambda}}$; this can be seen by
    \[\Hom_{\scrU}(\bfM_{\underline{\lambda}},\bfI_{\underline{\lambda}})=\Hom_{\frakS_{\underline{a}}}(\bfM_{\underline{\lambda}},\bfM_{\underline{\lambda}}^*)=\C.\]
    Then, $\bfI_{\underline{\lambda}}$ is the injective hull of $\bfM_{\underline{\lambda}}$.
\end{proof}

\begin{proposition}
    The module $\P_{\underline{\lambda}}$ is an indecomposable projective, and it is the projective cover of $\bfM_{\underline{\lambda}}$.
\end{proposition}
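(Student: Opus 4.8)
The plan is to run the proof of Proposition~\ref{prop:fb indecomposable injs} in reverse, trading injectives for projectives, socles for heads, and the formula $\Hom_\scrU(M,\bfI_{\underline a})=M(\underline a)^*$ for $\Hom_\scrU(\P_{\underline a},M)=M(\underline a)$. First, $\P_{\underline\lambda}$ is by definition the $\bfM_{\underline\lambda}$-isotypic piece of $\P_{\underline a}$ for the $\frakS_{\underline a}$-action by $\scrU$-automorphisms, hence a direct summand of the projective $\P_{\underline a}$, hence projective. The key input is the analogue of the Hom formula used in the injective case: for any $\scrU$-module $M$,
\[\Hom_\scrU(\P_{\underline\lambda},M)=\Hom_{\frakS_{\underline a}}(\bfM_{\underline\lambda},M(\underline a)).\]
This follows from $\Hom_\scrU(\P_{\underline a},M)=M(\underline a)$, using that this isomorphism is $\frakS_{\underline a}$-equivariant, so that restricting to the $\bfM_{\underline\lambda}$-isotypic summand on the source corresponds to extracting the $\bfM_{\underline\lambda}$-multiplicity space of $M(\underline a)$ (using, as in the proof of Proposition~\ref{prop:fb indecomposable injs}, that Specht modules over $\C$ are self-dual).

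Granting this, indecomposability is immediate: taking $M=\P_{\underline\lambda}$ and using $\P_{\underline a}(\underline a)=\C[\Hom_\scrU(\underline a,\underline a)]=\C[\frakS_{\underline a}]$, we compute $\P_{\underline\lambda}(\underline a)=\Hom_{\frakS_{\underline a}}(\bfM_{\underline\lambda},\C[\frakS_{\underline a}])\cong\bfM_{\underline\lambda}$, whence $\End_\scrU(\P_{\underline\lambda})=\Hom_{\frakS_{\underline a}}(\bfM_{\underline\lambda},\bfM_{\underline\lambda})=\C$; having no nontrivial idempotent endomorphisms, $\P_{\underline\lambda}$ is indecomposable. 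For the projective cover statement, take $M=\bfM_{\underline\mu}$: since $\bfM_{\underline\mu}(\underline a)$ equals $\bfM_{\underline\mu}$ when $\abs{\mu^i}=a_i$ for all $i$ and is $0$ otherwise, we get $\Hom_\scrU(\P_{\underline\lambda},\bfM_{\underline\mu})=\C$ if $\underline\mu=\underline\lambda$ and $0$ otherwise. Thus $\P_{\underline\lambda}$ surjects onto $\bfM_{\underline\lambda}$ and onto no other simple, and since this Hom space is one-dimensional the head of $\P_{\underline\lambda}$ is $\bfM_{\underline\lambda}$ with multiplicity one; an indecomposable projective with simple head $\bfM_{\underline\lambda}$ is the projective cover of $\bfM_{\underline\lambda}$. (All $\scrU$-modules here have finite length, so heads are semisimple and projective covers exist and behave as expected.)

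The main obstacle will be making the Hom formula precise — keeping track of the two-sided $\frakS_{\underline a}$-action on $\P_{\underline a}$ and the self-duality of Specht modules so that the multiplicity spaces match exactly — but this is strictly parallel to the analogous step for $\bfI_{\underline\lambda}$ and no harder. Alternatively, one can bypass that bookkeeping entirely via duality: one checks directly that $\P_{\underline a}^\vee\cong\bfI_{\underline a}$ as $\scrD$-modules (both send $S$ to $\C[\Hom_\scrD(S,\underline a)]^*$), so $\P_{\underline\lambda}^\vee\cong\bfI_{\underline\lambda}$ over $\scrD$, and then invokes the $\scrD$-analogue of Proposition~\ref{prop:fb indecomposable injs} together with the fact that the contravariant equivalence $(-)^\vee$ exchanges projectives with injectives and heads with socles.
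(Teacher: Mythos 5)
Your proposal is correct and follows essentially the same route as the paper: projectivity from being a summand of $\P_{\underline{a}}$, indecomposability from computing $\End_\scrU(\P_{\underline{\lambda}})=\Hom_{\frakS_{\underline{a}}}(\bfM_{\underline{\lambda}},\P_{\underline{\lambda}}(\underline{a}))=\C$, and the projective cover from the one-dimensional surjection onto $\bfM_{\underline{\lambda}}$ (the paper cites a standard lemma of Krause where you argue the simple-head step directly, but the content is the same). One small notational point: in your duality aside, the $\scrD$-module $\P_{\underline{a}}^\vee$ is what the paper denotes $\bfJ_{\underline{a}}$, the principal injective of $\scrD$, rather than $\bfI_{\underline{a}}$, which is reserved for the $\scrU$-side.
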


\begin{proof}
    It is a direct summand of the projective $\P_{\underline{a}}$, and so it is projective as well. We have that
    \[\End_\scrU(\P_{\underline{\lambda}})=\Hom_{\frakS_{\underline{a}}}(\bfM_{\underline{\lambda}},\P_{\underline{\lambda}}(\underline{a}))=\Hom_{\frakS_{\underline{a}}}(\bfM_{\underline{\lambda}},\bfM_{\underline{\lambda}})=\C,\]
    and so $\P_{\underline{\lambda}}$ is indecomposable.

    There is a surjection $\P_{\underline{\lambda}}\to\bfM_{\underline{\lambda}}$, since
    \[\Hom_\scrU(\P_{\underline{\lambda}},\bfM_{\underline{\lambda}})=\Hom_{\frakS_{\underline{a}}}(\bfM_{\underline{\lambda}},\bfM_{\underline{\lambda}})=\C.\]
    Therefore, by \cite[Lemma 3.6]{Kr}, the surjection is a projective cover.
\end{proof}

\begin{corollary}\label{cor:classifyig indecomp injs}
\begin{enumerate}
    \item The $\scrU$-modules $\bfI_{\underline{\lambda}}$ form a complete irredundant set of indecomposable injectives in $\Mod_{\scrU}$, and the $\scrU$-modules $\bfP_{\underline{\lambda}}$ form a complete irredundant set of indecomposable projectives in $\Mod_\scrU$.
    \item Every object in $\Mod_{\scrU}$ has finite injective and projective dimension.
\end{enumerate}
\end{corollary}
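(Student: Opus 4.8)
The plan is to bootstrap from the two immediately preceding propositions, which identify $\bfI_{\underline{\lambda}}$ as the injective hull of the simple module $\bfM_{\underline{\lambda}}$ and $\bfP_{\underline{\lambda}}$ as the projective cover of $\bfM_{\underline{\lambda}}$. For part (1), the key structural input I would establish first is that $\Mod_\scrU$ is a \emph{locally finite} abelian category in the sense that every object has finite length: indeed, any $\scrU$-module $M$ is supported on finitely many objects $\underline{a}$ up to isomorphism by definition, each $M(\underline{a})$ is a finite-dimensional representation of the finite group $\frakS_{\underline{a}}$, and hence $M$ has a finite filtration whose graded pieces are among the $\bfM_{\underline{\lambda}}$. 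In such a category, the indecomposable injectives are precisely the injective hulls of the simple objects, and these are pairwise non-isomorphic because their socles are; the preceding proposition computing the simples shows these socles exhaust all simple $\scrU$-modules, so $\{\bfI_{\underline{\lambda}}\}$ is complete and irredundant. The statement for $\{\bfP_{\underline{\lambda}}\}$ is dual (or can be deduced directly via the self-duality of Proposition~\ref{prop:ModFBn selfdual}, which swaps projectives and injectives while sending $\bfM_{\underline\lambda}$ to $\bfM_{\tau(\underline\lambda)}$).

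For part (2), I would first bound projective dimension. The module $\P_{\underline{a}}(T) = \C[\Hom_\scrU(\underline{a},T)]$ is, as a representation of the automorphism groups, governed by the combinatorics of non-decreasing bijections; the crucial finiteness is that for fixed $\underline{a}$ there are only finitely many $\underline{b}$ with $|\underline{b}| = |\underline{a}|$ admitting a morphism $\underline{a}\to\underline{b}$, and a morphism in $\scrU$ can only move weights up, so the relation "$\underline{a}$ maps to $\underline{b}$" is contained in a fixed partial order with finitely many elements below any given one. Using this, I would build a finite projective resolution of each simple $\bfM_{\underline{\lambda}}$ by downward induction on the weight profile: the kernel of $\P_{\underline{\lambda}}\twoheadrightarrow\bfM_{\underline{\lambda}}$ is supported on strictly "higher" weight data, so iterating terminates after at most $\sum_i (n-i)a_i$ steps. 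Since every object has finite length and finite projective dimension is additive over short exact sequences, it follows that every object of $\Mod_\scrU$ has finite projective dimension. Finite injective dimension then follows either by the same argument applied to $\Mod_\scrD\cong\Mod_\scrU^{\mathrm{op}}$, or directly from the self-duality functor $\tau_!$ of Proposition~\ref{prop:ModFBn selfdual} composed with the duality $(-)^\vee$.

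The main obstacle I anticipate is making the inductive finiteness in part (2) genuinely rigorous rather than hand-wavy: one needs a well-chosen monovariant statistic on the set of isomorphism classes $\underline{a}$ such that every nonzero morphism in $\scrU$ that is not an isomorphism strictly increases it, and such that the kernel of $\P_{\underline{\lambda}}\to\bfM_{\underline{\lambda}}$ (equivalently the radical of $\P_{\underline\lambda}$) is concentrated in strictly larger values. The natural candidate is $w(\underline{a}) = \sum_{i=1}^n i\cdot a_i$, which is non-decreasing along morphisms and strictly increases under any weight-raising bijection; since $|\underline{a}|$ is preserved, $w$ takes only finitely many values on the support of any fixed $\P_{\underline\lambda}$, giving the termination bound. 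One should also confirm the analogous claim used implicitly in part (1)—that $\Mod_\scrU$ has enough injectives so that injective hulls of simples exist—which follows since the principal injectives $\bfI_{\underline{a}}$ already form a generating family of injectives by the adjunction $\Hom_\scrU(M,\bfI_{\underline{a}}) = M(\underline{a})^*$ recalled above. Once these finiteness bookkeeping points are pinned down, the corollary is formal.
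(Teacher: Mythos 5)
Your proposal is correct and follows essentially the same route as the paper: part (1) from the classification of simples together with the preceding propositions identifying $\bfI_{\underline{\lambda}}$ and $\P_{\underline{\lambda}}$ as injective hulls and projective covers, and part (2) by observing that the cokernel/kernel of the map between $\bfM_{\underline{\lambda}}$ and its (co)resolution terms is supported on tuples strictly comparable to $\underline{a}$ in a partial order with only finitely many admissible values (your statistic $w(\underline{a})=\sum_i i\,a_i$ is equivalent to the paper's use of the dominance order at fixed $\abs{\underline{a}}$), so the induction terminates. The only cosmetic difference is that you obtain the injective half of (2) by self-duality rather than running the symmetric argument directly.
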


\begin{proof}
(1) This follows from all simples being of the form $\bfM_{\underline{\lambda}}$.

(2)  Suppose $\underline{a}=(\abs{\lambda^1},\ldots,\abs{\lambda^n})$. 
The quotient $\bfI_{\underline{\lambda}}/\bfM_{\underline{\lambda}}$ is supported on weighted finite sets $\underline{b}$ such that $\abs{\underline{a}}=\abs{\underline{b}}$ and $\underline{b}>\underline{a}$ under the dominance order, and the kernel of the surjection $\bfP_{\underline{\lambda}}\to\bfM_{\underline{\lambda}}$ is supported on weighted finite sets $\underline{c}$ such that $\abs{\underline{a}}=\abs{\underline{c}}$ and $\underline{c}<\underline{a}$. There are only finitely many such sets $\underline{b}$ and $\underline{c}$.
Thus, any $\scrU$-module has a finite injective resolution given by finite direct sums of indecomposable injectives, and a finite projective resolution given by finite direct sums of indecomposable projectives.
\end{proof}

The above definitions and results also hold for $\scrD$-modules; we denote the corresponding projective modules by $\bfQ_{\underline{a}},\bfQ_{\underline{\lambda}}$, and the injective modules by $\bfJ_{\underline{a}},\bfJ_{\underline{\lambda}}$. Under the equivalence of categories $(\Mod_{\scrU})^{\text{op}}\cong\Mod_{\scrD}$, we have that $(\P_{\underline{a}})^\vee=\bfJ_{\underline{a}}$ and $ (\bfI_{\underline{a}})^\vee=\bfQ_{\underline{a}}$. Under the equivalence of categories $\Mod_{\scrU}\cong\Mod_{\scrD}$, we have
$\P_{\underline{a}}=\bfQ_{\tau(\underline{a})}$ and $\bfI_{\underline{a}}=\bfJ_{\tau(\underline{a})}.$ The analogous identifications hold for the indecomposable injective and projective objects.

\subsection{Tensor products}\label{subsec:Umod tens}

Using the monoidal structure $\amalg$ of $\scrU$, one can define a tensor product of $\scrU$-modules (and analogously of $\scrD$-modules) as follows. Let $M,N$ be $\scrU$-modules. We define $M\otimes N$ by 
\begin{equation}\label{eqn:tensor}(M\otimes N)(U)=\bigoplus_{U=S\amalg T}M(S)\otimes_\C N(T),\end{equation}
where $U,S,T$ are weighted sets, and the weights of $S,T$ are induced from those of $U$.

The following lemma shows how this tensor product works on principal projectives. For $\underline{a},\underline{b}\in\N^n$, let $\underline{a+b}=(a_1+b_1,\ldots,a_n+b_n)$. Note that this is isomorphic to $\underline{a}\amalg\underline{b}$ as weighted sets.

\begin{lemma}\label{lem:tensor prin proj}
    For $\underline{a},\underline{b}\in\N^n$, there is a natural isomorphism $\P_{\underline{a}}\otimes\P_{\underline{b}}\cong\P_{\underline{a+b}}$.
\end{lemma}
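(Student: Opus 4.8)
The plan is to unwind both sides of the claimed isomorphism using the definition of the tensor product in \eqref{eqn:tensor} together with the definition $\P_{\underline{a}}(T) = \C[\Hom_\scrU(\underline{a},T)]$, and then exhibit a natural bijection on the level of the indexing sets of basis vectors. Concretely, for a weighted finite set $U$ we have
\[
(\P_{\underline{a}}\otimes\P_{\underline{b}})(U) = \bigoplus_{U = S\amalg T}\C[\Hom_\scrU(\underline{a},S)]\otimes_\C\C[\Hom_\scrU(\underline{b},T)],
\]
and a basis for this space is indexed by triples $(S,T,\varphi,\psi)$ where $U = S\amalg T$ as weighted sets (an ordered decomposition into two disjoint weighted subsets whose weights are induced from $U$), $\varphi\in\Hom_\scrU(\underline{a},S)$, and $\psi\in\Hom_\scrU(\underline{b},T)$. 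On the other side, a basis for $\P_{\underline{a+b}}(U) = \C[\Hom_\scrU(\underline{a+b},U)]$ is indexed by upwards bijections $\underline{a}\amalg\underline{b} = \underline{a+b}\to U$.

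First I would define the map from the right-hand side to the left: given an upwards bijection $\rho:\underline{a}\amalg\underline{b}\to U$, set $S = \rho(\underline{a})$ and $T = \rho(\underline{b})$, which gives a weighted decomposition $U = S\amalg T$ since $\rho$ is a bijection; then $\rho$ restricts to upwards bijections $\varphi = \rho|_{\underline{a}}:\underline{a}\to S$ and $\psi = \rho|_{\underline{b}}:\underline{b}\to T$, and we send the basis vector $e_\rho$ to $e_\varphi\otimes e_\psi$ in the $(S,T)$-summand. Conversely, given $(S,T,\varphi,\psi)$ with $U = S\amalg T$, the bijections $\varphi,\psi$ glue to a single bijection $\underline{a}\amalg\underline{b}\to S\amalg T = U$, and this glued map is weight-nondecreasing because each of $\varphi,\psi$ is. These two assignments are mutually inverse, so they give a linear isomorphism $\P_{\underline{a+b}}(U)\xrightarrow{\sim}(\P_{\underline{a}}\otimes\P_{\underline{b}})(U)$ for each $U$.

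It then remains to check naturality in $U$: for a morphism $\alpha:U\to U'$ in $\scrU$, the induced maps on both sides are given by post-composition with $\alpha$ (on the left, $\alpha$ carries a decomposition $U = S\amalg T$ to $U' = \alpha(S)\amalg\alpha(T)$ and post-composes the component bijections, since $\alpha$ is a bijection of the underlying sets), and gluing commutes with post-composition, so the square commutes. I would also remark that the isomorphism is $\frakS_{\underline{a}}\times\frakS_{\underline{b}}$-equivariant for the natural actions — $\frakS_{\underline{a}}$ acting on $\underline{a}$ and $\frakS_{\underline{b}}$ on $\underline{b}$ on the source — which is what lets this lemma pass to the isotypic pieces $\P_{\underline{\lambda}}$ later. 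The only mildly delicate point, and the one I'd be most careful about, is bookkeeping the weighted structure: one must confirm that "$U = S\amalg T$" in \eqref{eqn:tensor} really does range over ordered pairs of disjoint weighted subsets with induced weights (so that $\rho\mapsto(\rho(\underline a),\rho(\underline b))$ lands in the correct index set and is a bijection onto it), rather than over unordered partitions or over abstract isomorphism classes. Once that convention is pinned down, the argument is a direct combinatorial identification with no real obstacle.
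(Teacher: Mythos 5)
Your proposal is correct and follows essentially the same route as the paper: identify basis elements of $\P_{\underline{a+b}}(U)$ with pairs of upwards bijections $\underline{a}\to S$, $\underline{b}\to T$ for decompositions $U=S\amalg T$ via restriction and gluing, then check naturality under post-composition. The added remarks on $\frakS_{\underline{a}}\times\frakS_{\underline{b}}$-equivariance and on the ordered-decomposition convention are consistent with the paper's conventions and do not change the argument.
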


\begin{proof}
Let $U\in\scrU$ be a weighted finite set. Given a morphim $\varphi:\underline{a+b}\to U$ in $\scrU$, one obtains morphisms $\varphi_1:\underline{a}\to S$, $\varphi_2:\underline{b}\to U\setminus S$ by restricting $\varphi$ via the natural identification $\underline{a+b}=\underline{a}\amalg\underline{b}$. On the other hand, given morphisms $\psi_1:\underline{a}\to S$, $\psi_2:\underline{b}\to T$ such that $S\amalg T=U$, one obtains a unique morphism $\psi:\underline{a}\amalg\underline{b}\to S\amalg T=U$. In particular, we have
\[\C[\Hom_\scrU(\underline{a+b},U)]=\bigoplus_{U=S\amalg T}\C[\Hom_\scrU(\underline{a},S)]\otimes\C[\Hom_\scrU(\underline{b},T)]\]
as vector spaces.

Furthermore, this identification is natural in the sense that if $\sigma:U\to V$ is a morphism in $\scrU$, then the maps $\P_{\underline{a+b}}(U)\to\P_{\underline{a+b}}(V)$ and $(\P_{\underline{a}}\otimes\P_{\underline{b}})(U)\to (\P_{\underline{a}}\otimes\P_{\underline{b}})(V)$ agree. Thus, we have the desired isomorphism of $\scrU$-modules.
\end{proof}

By (\ref{eqn:tensor}), this tensor product is exact in each variable. In particular, it is right-exact, and so since it satisfies the property on tensor products of principal projectives in Lemma~\ref{lem:tensor prin proj}, this tensor product agrees with \emph{Day convolution}, or the \emph{convolution tensor product} defined in \cite[(2.1.14)]{SSstabpatterns}. Therefore, this tensor product naturally gives $\Mod_\scrU$ the structure of a tensor category with unit object $\P_\varnothing$. Note also that the equivalence $\Mod_\scrU\cong\Mod_\scrD$ given by $\tau$ is one of tensor categories.

\subsection{Structured $\Hom$ spaces and tensor products}\label{subsec:structured hom}

We now give two constructions using $\scrU$-modules and $\scrD$-modules. They will be used in \S\ref{sec:equiv} to describe the equivalence of categories $\Mod_\scrU\cong\Rep(\bfG)$, as well as to give a universal property for this category.

Let $\mathscr{C}$ be an abelian category, and let $\mathcal{K}$ be a functor $\scrU\to\mathscr{C}$. For a $\scrU$-module $M$, we define an object $\Hom(M,\mathcal{K})$ of $\mathscr{C}$, called the \emph{structured Hom space}, as
\[\Hom(M,\mathcal{K})=\lim_{f:S\to T}\Hom(M(S),\mathcal{K}(S)),\]
where the inverse limit is over morphisms in $\scrU$. Note that this is a finite limit since $M$ is of finite length and $\scrU$ decomposes into finite directed categories, and so this Hom space always exists. More explicitly, the object in $\mathscr{C}$ is defined by the following mapping property. To give a morphism $f:A\to\Hom(M,\mathcal{K})$ in $\mathscr{C}$ is the same as giving morphisms $f_S:A\to \Hom(M(S),\mathcal{K}(S))$ for all $S\in\mathscr{U}$ such that for any $x\in A$ and morphism $\alpha:S\to T$ in $\mathscr{U}$, we have that the following diagram commutes:
    \[
 \xymatrix@C+0.5em@R+0.5em{ 
 	 M(S) \ar^{f_S(x)}[r] \ar^{\alpha_*}[d] & \mathcal{K}(S) \ar^{\alpha_*}[d]  \\
 	M(T) \ar^{f_T(x)}[r] & \mathcal{K}(T)
 }
 \]

 The next result follows immediately from the universal property.

\begin{lemma}\label{lem:structured hom prin proj}
    We have an identification $\Hom(\P_{\underline{a}},\mathcal{K})=\mathcal{K}(\underline{a}).$
\end{lemma}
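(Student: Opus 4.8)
The plan is to unwind the definition of the structured Hom space $\Hom(\P_{\underline a},\mathcal K)$ as an inverse limit over morphisms $\alpha\colon S\to T$ in $\scrU$, and to show directly that this limit is computed by the single object $\mathcal K(\underline a)$. Concretely, I will use the universal property spelled out just before the lemma: a morphism $A\to\Hom(\P_{\underline a},\mathcal K)$ in $\mathscr C$ is the same as a compatible family of morphisms $f_S\colon A\to\Hom(\P_{\underline a}(S),\mathcal K(S))$, one for each $S\in\scrU$, subject to the commuting-square condition for every $\alpha\colon S\to T$. I want to show this data is canonically the same as a single morphism $A\to\mathcal K(\underline a)$, which by Yoneda gives the claimed identification.

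First I would recall that $\P_{\underline a}(S)=\C[\Hom_\scrU(\underline a,S)]$, so that $\Hom(\P_{\underline a}(S),\mathcal K(S))=\Hom\bigl(\C[\Hom_\scrU(\underline a,S)],\mathcal K(S)\bigr)=\prod_{\varphi\in\Hom_\scrU(\underline a,S)}\mathcal K(S)$, i.e.\ a tuple of morphisms $A\to\mathcal K(S)$ indexed by morphisms $\varphi\colon\underline a\to S$. Thus a compatible family $\{f_S\}$ amounts to a morphism $g_{S,\varphi}\colon A\to\mathcal K(S)$ for every object $S$ and every $\varphi\colon\underline a\to S$. The compatibility condition under $\alpha\colon S\to T$ says that $g_{T,\alpha\circ\varphi}=\alpha_*\circ g_{S,\varphi}$ for all $\varphi\colon\underline a\to S$ (this is exactly the commuting square, read off on the basis vector $[\varphi]\in\P_{\underline a}(S)$, since $\alpha_*[\varphi]=[\alpha\circ\varphi]$). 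Specializing to $S=\underline a$ and $\varphi=\id_{\underline a}$, set $g:=g_{\underline a,\id}\colon A\to\mathcal K(\underline a)$. Then the compatibility relation forces $g_{S,\varphi}=\varphi_*\circ g$ for every $\varphi\colon\underline a\to S$ (apply the relation with $\alpha=\varphi$ and the identity morphism on $\underline a$). Hence the entire family is determined by the single morphism $g$, and conversely any $g\colon A\to\mathcal K(\underline a)$ produces such a family by the formula $g_{S,\varphi}:=\varphi_*\circ g$, which one checks is compatible. This gives a natural bijection $\Hom_{\mathscr C}(A,\Hom(\P_{\underline a},\mathcal K))\cong\Hom_{\mathscr C}(A,\mathcal K(\underline a))$, natural in $A$, so $\Hom(\P_{\underline a},\mathcal K)\cong\mathcal K(\underline a)$ by Yoneda.

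The only mild subtlety — and the step I would be most careful about — is the bookkeeping identifying $\Hom(\C[\Hom_\scrU(\underline a,S)],\mathcal K(S))$ with a product and then checking that the naturality square in the universal property translates exactly into the cocycle-type relation $g_{T,\alpha\circ\varphi}=\alpha_*\circ g_{S,\varphi}$; once that is set up, the argument is just the standard fact that a cone over a category with an initial-type object (here $\id_{\underline a}$ plays the role of a terminal object in the relevant comma category) is determined by its value there. Alternatively, I could phrase the whole thing more slickly: $\Hom(\P_{\underline a},\mathcal K)$ is the limit of $\Hom(\P_{\underline a}(-),\mathcal K(-))$ over the twisted-arrow-type diagram, the functor $\P_{\underline a}$ is corepresented by $\underline a$ in the relevant sense, and so the limit collapses to the fiber over $\underline a$, giving $\mathcal K(\underline a)$. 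I would present the explicit version since it matches the down-to-earth universal property already displayed in the text. This is exactly the structured-Hom analogue of the identity $\Hom_\scrU(\P_{\underline a},M)=M(\underline a)$ from \cite[Proposition 3.2]{SSBrauerI} recalled earlier.
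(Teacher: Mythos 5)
Your proof is correct and is exactly the argument the paper has in mind: the paper simply asserts the lemma "follows immediately from the universal property," and you have filled in the standard Yoneda-style verification that a compatible family $\{g_{S,\varphi}\}$ is determined by its value at $\id_{\underline{a}}$.
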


Another useful property regarding this construction is the following. There is a functor $\mathscr{C}\to\Mod_\scrU$ defined by
\[N\mapsto(S\mapsto\Hom_\mathscr{C}(N,\mathcal{K}(S));\]
we denote the resulting $\scrU$-module by $\Hom_\mathscr{C}(N,\mathcal{K})$.
Then by \cite[Proposition 2.1.10]{SSstabpatterns}, the functor $M\mapsto\Hom(M,\mathcal{K})$ is adjoint on the right to this functor:
\[\Hom_\scrU(M,\Hom_\mathscr{C}(N,\mathcal{K}))=\Hom_\mathscr{C}(N,\Hom(M,\mathcal{K})).\]

There is a covariant version of the structured Hom construction. If $M$ is a $\scrD$-module, then we define the \emph{structured tensor product} $M\odot \mathcal{K}$ to be
\[M\odot\mathcal{K}=\Hom(M^\vee,\mathcal{K}).\]
It can also be defined by an inverse limit; see \cite[(2.1.9)]{SSstabpatterns}. By \cite[Proposition 2.1.16]{SSstabpatterns}, we have that the functors
\begin{align*}
    \Mod_\scrU\to\mathscr{C}:&\quad M\mapsto\Hom(M,\mathcal{K}),\\
    \Mod_\scrD\to\mathscr{C}:&\quad N\mapsto N\odot\mathcal{K}
\end{align*}
are both tensor functors.

\section{Representations of infinite rank parabolic subgroups}\label{sec:rep models}

In this section, we analyze the category $\Rep(\bfG)$ of polynomial representations of $\bfG$.
Such representations can be analyzed using representation theory of the Levi subgroup $\bfL$ of $\bfG$, since $\bfL\cong\prod_{i=1}^n\GL$.
Many of the arguments in this subsection are similar to those in \cite{SSstabpatterns}, e.g., their argument that the category of polynomial represetnations of the infinite orthogonal group is equivalent to the upwards Brauer category in \cite[\S 4.2]{SSstabpatterns}.

Recall that $\bfV$ is the infinite dimensional vector space with the flag $0=\bfV_0\subset\bfV_1\subset\cdots\subset\bfV_n=\bfV$ where $\bfV_i/\bfV_{i-1}$ is also infinite dimensional; this is the standard representation of $\bfG$. We let $\bfV_{(i)}$ denote the $i^{\text{th}}$ graded piece $\bfV_i/\bfV_{i-1}$. 

\subsection{Action of the endomorphism monoid}
Let $V$ be a polynomial representation of $\bfG$, and let $\rho:\bfG\to\GL(V)$ denote the map giving the $\bfG$-action. Then, after picking a basis for $V$, the entries of $\rho$ can be expressed in terms of polynomials.

Let $\End(\bfV)$ denote the monoid of endomorphisms of $\bfV$ preserving the flag structure, so an element is a linear map $f:\bfV\to \bfV$ such that $f(\bfV_i)\subset\bfV_i$. Then, any polynomial representation $V$ of $\bfG$ also has an action of $\End(\bfV)$ via the polynomial entries of the map $\rho$. Furthermore, if $V\to W$ is a map of polynomial $\bfG$-representations, then the map is equivariant with respect to the $\End(\bfV)$-actions on $V$ and $W$.

\subsection{Invariants and specialization}\label{subsec:G invar}
Let $\underline{a}\in\N^n$. Let $H_{\underline{a}}$ denote the subgroup of $\bfG$ consisting of block diagonal matrices such that the $(i,i)$ block is of the form
\[\begin{pmatrix} 1 & 0 \\ 0 & * \end{pmatrix},\]
where the top left block is $a_i\times a_i$ and the bottom right block is $(\infty-a_i)\times(\infty-a_i)$. Note that $H_{\underline{a}}\cong\bfL$. Let $G_{\underline{a}}$ denote the subgroup of $\bfG$ consisting of matrices where the $(i,j)$ block is of the form
\begin{align*}
    \begin{pmatrix}* & 0 \\ 0 & 1\end{pmatrix}&\quad\text{if $i=j$},\qquad\begin{pmatrix}* & 0 \\ 0 & 0 \end{pmatrix}\quad\text{if $i<j$},
\end{align*}
where the top left block is $a_i\times a_j$ and the bottom right block is $(\infty-a_i)\times(\infty- a_j)$ in both cases. 
We have that $G_{\underline{a}}$ commutes with $H_{\underline{a}}$, and so if $V\in\Rep(\bfG)$, then $V^{H_{\underline{a}}}$ is a representation of $G_{\underline{a}}$. Let $\Rep(G_{\underline{a}})$ denote the category of polynomial representations of $G_{\underline{a}}$.

\begin{lemma}\label{lem:invar tensor}
    Taking $H_{\underline{a}}$-invariants induces a tensor functor $\Rep(\bfG)\to\Rep(G_{\underline{a}})$.
\end{lemma}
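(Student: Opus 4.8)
The plan is to verify the two things that ``tensor functor'' requires: first, that $H_{\underline a}$-invariants of a polynomial $\bfG$-representation form a polynomial $G_{\underline a}$-representation (so that the assignment $V\mapsto V^{H_{\underline a}}$ lands in $\Rep(G_{\underline a})$ and is functorial), and second, that the natural map $V^{H_{\underline a}}\otimes W^{H_{\underline a}}\to (V\otimes W)^{H_{\underline a}}$ is an isomorphism of $G_{\underline a}$-representations, compatibly with the associator and unit constraints. Functoriality into $\Rep(\bfG\text{-reps})$ is automatic since a morphism of $\bfG$-reps restricts to a morphism of $H_{\underline a}$-reps and hence on invariants; the content is the \emph{polynomiality} of the output and the tensor isomorphism.

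For the first point, I would reduce to tensor powers of the standard representation. A polynomial $\bfG$-representation $V$ is a subquotient of a finite sum of $\bfV^{\otimes a}$; since taking $H_{\underline a}$-invariants is exact over $\C$ (the group $H_{\underline a}\cong\bfL$ is a product of copies of $\GL$, and its polynomial-representation theory is semisimple, so invariants commute with subquotients), it suffices to show $(\bfV^{\otimes a})^{H_{\underline a}}$ is a polynomial $G_{\underline a}$-representation. Here I decompose $\bfV$ using the flag: as an $H_{\underline a}$-representation $\bfV_i/\bfV_{i-1}$ splits (after the choice of ``$a_i$-dimensional'' coordinate subspace built into $H_{\underline a}$) as $\C^{a_i}\oplus \bfV_{(i)}'$, where $\C^{a_i}$ is the trivial $H_{\underline a}$-summand and $\bfV_{(i)}'$ is the ``large'' piece on which the $(i,i)$-block acts via $\GL$. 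Expanding $\bfV^{\otimes a}$ distributively and taking $H_{\underline a}$-invariants, the only surviving summands are built purely out of the trivial pieces $\C^{a_i}$, so $(\bfV^{\otimes a})^{H_{\underline a}}$ is a finite-dimensional space on which $G_{\underline a}$ acts through its block entries (the blocks of $G_{\underline a}$ are precisely the ones hitting the coordinate subspaces $\C^{a_i}$), and that action is polynomial because the $\bfG$-action on $\bfV^{\otimes a}$ is. Passing to subquotients then gives the claim for general $V$, using that $G_{\underline a}$ commutes with $H_{\underline a}$ (stated in \S\ref{subsec:G invar}) so that $V^{H_{\underline a}}$ is indeed $G_{\underline a}$-stable.

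For the tensor compatibility, the canonical lax structure map $V^{H_{\underline a}}\otimes W^{H_{\underline a}}\to (V\otimes W)^{H_{\underline a}}$ is always injective; surjectivity again reduces, by exactness of invariants and right-exactness of $\otimes$, to the case $V=\bfV^{\otimes a}$, $W=\bfV^{\otimes b}$, where $(\bfV^{\otimes a})^{H_{\underline a}}\otimes(\bfV^{\otimes b})^{H_{\underline a}}\to(\bfV^{\otimes(a+b)})^{H_{\underline a}}$ is an isomorphism by the same ``only the trivial summands survive'' computation — an $H_{\underline a}$-invariant in a tensor product of the split pieces must be invariant in each tensor factor. Compatibility with the associativity and unit constraints is then a diagram chase that is forced by naturality of the $H_{\underline a}$-invariant functor together with the fact that the lax maps are built from the canonical inclusions. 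The main obstacle is the first half: one must be careful that taking $H_{\underline a}$-invariants is exact on the relevant category and that the splitting $\bfV_{(i)}\cong\C^{a_i}\oplus\bfV_{(i)}'$ is $H_{\underline a}$-equivariant and compatible enough with the $G_{\underline a}$-action to conclude polynomiality; once the decomposition of $\bfV^{\otimes a}$ under $H_{\underline a}\times G_{\underline a}$ is set up cleanly, everything else follows formally.
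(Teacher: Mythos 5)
Your proof is correct, but it takes a more self-contained route than the paper. The paper's proof is essentially a two-line reduction: for $n=1$ the statement is \cite[Proposition 3.4.4]{SSstabpatterns}, and for general $n$ one restricts a polynomial $\bfG$-representation along the Levi $\bfL\cong\prod_{i=1}^n\GL$ and applies the $n=1$ case factorwise, since $H_{\underline{a}}\cong\bfL$. What you do instead is unpack the content behind that citation: split $\bfV$ as an $H_{\underline{a}}$-representation into the trivial coordinate pieces $\C^{a_i}$ and the large pieces, use semisimplicity of polynomial $\prod\GL$-representations to get exactness of invariants, and observe that no tensor product of positive-degree polynomial $\GL$-representations contains a trivial constituent, so only the all-trivial summands of $\bfV^{\otimes a}$ survive and the lax structure map is an isomorphism. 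Both arguments are sound; yours buys independence from the reference at the cost of redoing the $\GL$ computation, while the paper's buys brevity. Two small points worth tightening in your write-up: the reduction of the surjectivity of $V^{H_{\underline{a}}}\otimes W^{H_{\underline{a}}}\to(V\otimes W)^{H_{\underline{a}}}$ to tensor powers is cleaner if you argue directly from the $H_{\underline{a}}$-isotypic decomposition $V=V^{H_{\underline{a}}}\oplus V'$ (with $V'$ having no trivial constituents) rather than via presentations, since polynomial representations are subquotients, not just quotients, of sums of tensor powers; and you should say explicitly that the restriction of a polynomial $\bfG$-representation to $H_{\underline{a}}$ is a polynomial $\prod\GL$-representation, which is what licenses the semisimplicity and hence the exactness you invoke.
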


\begin{proof}
For $n=1$, this follows from the fact that taking such invariants for a polynomial $\GL$-representation is a tensor functor \cite[Proposition 3.4.4]{SSstabpatterns}. For general $n$, we have that a polynomial $\bfG$-representation is also a polynomial $\prod_{i=1}^n\GL$-representation by restricting to the action of the Levi subgroup $\bfL$. The result then follows by considering taking $H_{\underline{a}}$-invariants as a functor from the $n$-fold product of the category of polynomial $\GL$-representations.   
\end{proof}

\subsection{Principal objects}\label{subsec:G prin obs}
For an $n$-tuple $\underline{a}\in\N^n$, we define the polynomial $\bfG$-representations $T_{\underline{a}}$ and $U_{\underline{a}}$ by
\begin{align*}
    T_{\underline{a}}&=\bigotimes_{i=1}^n (\bfV/\bfV_{i-1})^{\otimes a_i},\qquad  U_{\underline{a}}=\bigotimes_{i=1}^n \bfV_i^{\otimes a_i}.
\end{align*}
After showing the equivalence of categories in \S \ref{sec:equiv}, it will follow that $T_{\underline{a}}$ is injective and $U_{\underline{a}}$ is projective.

For a morphism $\varphi:\underline{a}\to\underline{b}$ in $\scrU$, there is a corresponding map of $\bfG$-representations $T_{\underline{a}}\to T_{\underline{b}}$ defined as follows. A tensor factor $\bfV/\bfV_{i-1}$ of $T_{\underline{a}}$ corresponds to an element $x$ of weight $i$ in $\underline{a}$, and $\varphi(x)$ has weight $j$ in $\underline{b}$ for some $j\geq i$. Then surject $\bfV/\bfV_{i-1}$ onto the tensor factor $\bfV/\bfV_{j-1}$ corresponding to $\varphi(x)$. 

Now, define the polynomial $\bfG$-representation $K_{\underline{a}}$ by
\[K_{\underline{a}}=\bigcap\ker(T_{\underline{a}}\to T_{\underline{b}}),\]
where the intersection ranges over all non-isomorphisms in $\scrU$.
This intersection is finite, as the only such morphisms are if $\abs{\underline{a}}=\abs{\underline{b}}$ with $\underline{a}>\underline{b}$.

\begin{lemma}\label{lem:homG Ta}
Fix $\underline{a}\in\N^n$.
    \begin{enumerate}
        \item The $\scrU$-module given by $\underline{b}\mapsto\Hom_{\bfG}(T_{\underline{a}},T_{\underline{b}})$ is equal to $\P_{\underline{a}}=(\bfJ_{\underline{a}})^\vee$. In particular, $\Hom_\bfG(T_{\underline{a}},T_{\underline{b}})\neq 0$ if and only if $\abs{\underline{a}}=\abs{\underline{b}}$ and $\underline{a}\geq \underline{b}$. When $\abs{\underline{a}}=\abs{\underline{b}}$,
        \[\dim (\Hom_\bfG(T_{\underline{a}},T_{\underline{b}}))=b_1!b_2!\cdots b_n!\prod_{i=1}^{n-1}\binom{(a_1+\cdots+a_i)-(b_1+\cdots+b_{i-1})}{b_i}.\]
        \item $K_{\underline{a}}=\bigotimes_{i=1}^n\bfV_{(i)}^{\otimes a_i}$.
    \end{enumerate}
\end{lemma}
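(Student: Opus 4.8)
The plan is to compute the two halves of the lemma more or less independently, both by reducing to a purely combinatorial count of morphisms in $\scrU$ composed with a Schur--Weyl-type analysis of $\bfG$-equivariant maps between tensor powers of the standard representation and its quotients.

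For part (1), I would first identify $\Hom_\bfG(T_{\underline a}, T_{\underline b})$ explicitly. Each $T_{\underline a}$ is a tensor product of copies of the quotients $\bfV/\bfV_{i-1}$, which as $\bfG$-representations are the ``standard'' representations of the quotient groups acting on the successive graded-plus-tail pieces. A $\bfG$-equivariant map out of a tensor power factors, by a Schur--Weyl argument for each quotient $\bfV/\bfV_{i-1}$ (these are infinite-dimensional, so one uses the standard fact that $\End_\bfG((\bfV/\bfV_{i-1})^{\otimes m}) = \C[\frakS_m]$ and that $\Hom_\bfG(\bfV/\bfV_{i-1}, \bfV/\bfV_{j-1})$ is one-dimensional precisely when $j\ge i$, spanned by the natural surjection, and zero otherwise), through contractions that send each tensor slot of $T_{\underline a}$ of weight $i$ to a slot of $T_{\underline b}$ of weight $\ge i$. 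Organizing this, a basis for $\Hom_\bfG(T_{\underline a}, T_{\underline b})$ is given by bijections $\underline a \to \underline b$ that do not decrease weights --- i.e.\ by $\Hom_\scrU(\underline a, \underline b)$. Checking that this identification is natural in $\underline b$ (compatible with the maps $T_{\underline b}\to T_{\underline c}$ and with composition in $\scrU$) gives the isomorphism of $\scrU$-modules $\bigl(\underline b\mapsto\Hom_\bfG(T_{\underline a},T_{\underline b})\bigr) \cong \P_{\underline a}$; the identification $\P_{\underline a} = (\bfJ_{\underline a})^\vee$ is then just the duality statement already recorded after Corollary~\ref{cor:classifyig indecomp injs}. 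The nonvanishing criterion $\abs{\underline a}=\abs{\underline b}$ and $\underline a\ge\underline b$ is the combinatorial condition for a weight-non-decreasing bijection to exist. The dimension formula is then a direct count of such bijections: build the bijection weight-class by weight-class, from $i=1$ up, choosing at step $i$ which $b_i$ of the available elements (those of $\underline b$ of weight $i$, together with the leftover slots of $\underline a$ of weights $\le i$ not yet used --- there are $(a_1+\cdots+a_i) - (b_1+\cdots+b_{i-1})$ of these) map to the weight-$i$ part of $\underline a$, in $\binom{(a_1+\cdots+a_i)-(b_1+\cdots+b_{i-1})}{b_i}$ ways, and then an arbitrary bijection on those, in $b_i!$ ways; multiplying over $i$ gives the stated product (with the final class forced, so $\binom{a_n}{b_n}\cdot\text{stuff}$ collapses appropriately when $\abs{\underline a}=\abs{\underline b}$).

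For part (2), I want to show $K_{\underline a} = \bigcap \ker(T_{\underline a}\to T_{\underline b}) = \bigotimes_{i=1}^n \bfV_{(i)}^{\otimes a_i}$. One inclusion is easy: the composite $\bigotimes_i \bfV_{(i)}^{\otimes a_i} \hookrightarrow \bigotimes_i (\bfV/\bfV_{i-1})^{\otimes a_i} = T_{\underline a}$ (using $\bfV_{(i)} = \bfV_i/\bfV_{i-1}\subset \bfV/\bfV_{i-1}$ --- wait, this needs care, since $\bfV_i/\bfV_{i-1}$ is not naturally a subspace of $\bfV/\bfV_{i-1}$, only a quotient; I would instead realize $K_{\underline a}$ concretely). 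A cleaner route: pick a $\bfG$-stable splitting $\bfV = \bigoplus_i \bfV_{(i)}$ of graded pieces is \emph{not} available, but $\bfV/\bfV_{i-1} \cong \bigoplus_{j\ge i}\bfV_{(j)}$ as a representation of the Levi $\bfL$, with the non-diagonal part of $\bfG$ acting by ``strictly lowering'' the grading. Then $T_{\underline a}$ decomposes, as an $\bfL$-representation, into a sum of tensor products $\bigotimes_i \bfV_{(j_i)}^{\cdots}$ over all ways of choosing $j_i\ge i$ for each slot, and the map $T_{\underline a}\to T_{\underline b}$ for a non-isomorphism $\varphi$ kills exactly those summands that are ``too low,'' while the unipotent part of $\bfG$ moves mass from higher summands down. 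The summand where every weight-$i$ slot picks $j_i = i$ --- namely $\bigotimes_i \bfV_{(i)}^{\otimes a_i}$ --- is the unique one on which every relevant $\Hom_\bfG(T_{\underline a}, T_{\underline b})$ vanishes identically and which is itself a subrepresentation (not just an $\bfL$-summand); intersecting over all the finitely many non-isomorphisms $\varphi$ (those with $\abs{\underline a}=\abs{\underline b}$, $\underline a>\underline b$, which strictly raise the weight of at least one element) cuts $T_{\underline a}$ down to exactly this bottom piece. I would make this precise by testing against the maps $\Hom_\bfG(T_{\underline a}, T_{\underline b})$ computed in part (1): an element of $T_{\underline a}$ lies in $K_{\underline a}$ iff it is killed by all of them, and unwinding the basis of part (1) shows this selects precisely the $\bigotimes \bfV_{(i)}^{\otimes a_i}$ component.

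The main obstacle is the Schur--Weyl input in part (1): controlling $\Hom_\bfG$ between tensor powers of the \emph{quotient} representations $\bfV/\bfV_{i-1}$, since these are infinite-dimensional and the group $\bfG$ is a parabolic rather than all of $\GL$. I expect to handle this by restricting to the Levi $\bfL\cong\prod_i\GL$ (as in Lemma~\ref{lem:invar tensor}), using the classical statement that $\End_{\GL}(\bfW^{\otimes m}) = \C[\frakS_m]$ for the infinite standard representation $\bfW$ and that $\Hom_{\GL}(\bfW,\bfW) = \C$, and then checking that the extra (unipotent) generators of $\bfG$ over $\bfL$ do not enlarge the Hom space but do impose the weight-monotonicity constraint --- this is the one place where a genuine argument, rather than bookkeeping, is needed, and it is also exactly the mechanism that makes part (2) work.
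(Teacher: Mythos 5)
Your route for part (1) is genuinely different from the paper's, and it identifies the right answer and the right mechanism, but as written it defers exactly the step that constitutes the proof. You reduce everything to the claim that $\Hom_\bfG(T_{\underline{a}},T_{\underline{b}})$ has a basis of slot-permutation maps indexed by weight-non-decreasing bijections, citing the one-slot facts $\End_\bfG(\bfV/\bfV_{i-1})=\C$ and $\Hom_\bfG(\bfV/\bfV_{i-1},\bfV/\bfV_{j-1})=0$ for $j<i$. Those one-slot facts do not formally yield the multi-slot statement: the Hom space of a tensor product is not the sum over matchings of one-slot Homs without a Schur--Weyl-type theorem for the group in question, and $\bfG$ is a proper subgroup of each $\GL(\bfV/\bfV_{i-1})$, so classical Schur--Weyl does not apply directly. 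Your fallback --- decompose over the Levi $\bfL$, apply classical Schur--Weyl factor by factor to get a (much larger) basis of $\Hom_\bfL$, then impose equivariance under the unipotent radical --- is viable and in the style of \cite{SSstabpatterns}, but the computation cutting $\Hom_\bfL$ down to the claimed span is the entire content of the lemma and is only asserted. The paper sidesteps this: $T_{\underline{a}}$ is cyclic, generated by $x=e_{n,1}\otimes\cdots\otimes e_{n,\abs{\underline{a}}}$, whose $\bfL$-weight space in $T_{\underline{b}}$ is spanned by the $\abs{\underline{a}}!$ arrangements of these vectors; a $\bfG$-map is therefore determined by finitely many coefficients, and explicit unipotent elements $g$ with $gx=x$ kill every coefficient violating weight-monotonicity. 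If you complete your route, I would import that cyclic-vector argument rather than fight the full $\Hom_\bfL$ reduction. Your count of $\abs{\Hom_\scrU(\underline{a},\underline{b})}$ is correct (modulo a source/target slip in the prose).

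For part (2), your worry that $\bfV_{(i)}=\bfV_i/\bfV_{i-1}$ is ``not naturally a subspace of $\bfV/\bfV_{i-1}$'' is unfounded: since $\bfV_{i-1}\subset\bfV_i\subset\bfV$, it is canonically a subspace, and the inclusion $\bigotimes_i\bfV_{(i)}^{\otimes a_i}\subset K_{\underline{a}}$ is direct --- every non-isomorphism strictly raises the weight of some slot of weight $i$, and the corresponding factor map $\bfV/\bfV_{i-1}\to\bfV/\bfV_{j-1}$ with $j>i$ has kernel $\bfV_{j-1}/\bfV_{i-1}\supset\bfV_{(i)}$. For the reverse inclusion, intersecting the kernels of the maps that bump a single element of weight $j$ to weight $j+1$, one slot at a time, already yields $\bigotimes_i\bfV_{(i)}^{\otimes a_i}$; your test-against-part-(1) formulation works once part (1) is actually established. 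The paper instead argues the reverse inclusion via the $\bfL$-weight of a generator of $\bigotimes_i\bfV_{(i)}^{\otimes a_i}$, which is equivalent in substance.
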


\begin{proof}
(1) We first show that $\Hom_{\bfG}(T_{\underline{a}},T_{\underline{b}})=\C[\Hom_{\scrU}(\underline{a},\underline{b})]$. Let $\{e_{i,j}\}_{j\geq 1}$ be a basis for $\bfV_{(i)}$. Then $T_{\underline{a}}$ is generated as a $\bfG$-module by $x=e_{n,1}\otimes\cdots\otimes e_{n,\abs{\underline{a}}}$; this element has weight $(0^{n-1},1^{\abs{\underline{a}}})$ under the $\bfL$-action. If $\abs{\underline{a}}\neq\abs{\underline{b}}$, then there is no weight space in $T_{\underline{b}}$ of this weight, and so there are no nonzero maps $T_{\underline{a}}\to T_{\underline{b}}$.

        Now assume $\abs{\underline{a}}=\abs{\underline{b}}$ with $\underline{a}\lneq\underline{b}$. Suppose for contradiction $\varphi:T_{\underline{a}}\to T_{\underline{b}}$ is a nonzero map. Up to a scalar, the image of $x$ must be of the form $\varphi(x)=e_{n,\sigma^{-1}(1)}\otimes\cdots\otimes e_{n,\sigma^{-1}(\abs{\underline{a}})}$, where $\sigma\in\frakS_{\abs{\underline{a}}}$. Then for some $i,j$ with $1\leq i<j\leq n$, there exists $k$ with 
        \[b_1+\cdots+b_{i-1}<k\leq b_1+\cdots +b_{i},\qquad a_1+\cdots+a_{j-1}<\sigma^{-1}(k)\leq a_1+\cdots+a_j.\]
        Let $g\in\bfG$ be the element with $I$'s along the diagonal, and $e_{1,\sigma^{-1}(k)}$ in the $(i,n)$-block. Then $gx-x=0$ in $T_{\underline{a}}$, while $\varphi(gx-x)=g\varphi(x)-\varphi(x)\neq 0$ in $T_{\underline{b}}$, since $g(e_{n,\sigma^{-1}(k)})= e_{i,1}+e_{n,\sigma^{-1}(k)}$. Thus, such a nonzero $\varphi$ cannot exist.

        Now suppose $\abs{\underline{a}}=\abs{\underline{b}}$ with $\underline{a}\geq \underline{b}$. A nonzero map $\varphi:T_{\underline{a}}\to T_{\underline{b}}$ must send $x$ to an element of the form (up to scalar) $e_{n,\sigma^{-1}(1)}\otimes\cdots\otimes e_{n,\sigma^{-1}(\abs{\underline{a}})}$ for some $\sigma\in\frakS_{\abs{\underline{a}}}$. In addition, we must have that $\sigma^{-1}([b_1+\cdots+b_i])\subset[a_1+\cdots+a_i]$ for each $i=1,\ldots,n$. This is because there are no nonzero maps $\bfV/\bfV_{j-1}\to\bfV/\bfV_{i-1}$ for $i<j$ by the above argument. Thus, $\varphi$ is determined by the morphism $\sigma:\underline{a}\to\underline{b}$ in $\scrU$. Furthermore, if $\pi:\underline{b}\to\underline{c}$ is a morphism in $\scrU$, then $\pi_*\circ\varphi:T_{\underline{a}}\to T_{\underline{c}}$ corresponds to the morphism $\pi\circ \sigma$ in $\scrU$. This identifies the $\mathscr{U}$-module $\underline{b}\mapsto\Hom_\bfG(T_{\underline{a}},T_{\underline{b}})$ with the principal projective $\P_{\underline{a}}$.
        
(2) Let $j\in[n-1]$, and let $\underline{b}=(a_1,\ldots,a_{j-1},0,a_j+a_{j+1},a_{j+2},\ldots,a_n)$. Then the kernel of the surjection $\varphi_j:T_{\underline{a}}\to T_{\underline{b}}$ is
        \[\left(\bigotimes_{i=1}^{j-1}(\bfV/\bfV_{i-1})^{\otimes a_i}\right)\otimes \bfV_{(j)}^{a_j}\otimes\left(\bigotimes_{i=j+1}^n(\bfV/\bfV_{i-1})^{\otimes a_i}\right).\]
        Thus, $K_{\underline{a}}\subset\bigcap_j\ker\varphi_j=\bigotimes_{i=1}^n \bfV_{(i)}^{\otimes a_i}$.
        
        The vector
        \[v=e_{1,1}\otimes \cdots\otimes e_{1,a_1}\otimes\cdots\otimes e_{n,1}\otimes\cdots\otimes e_{n,a_n}\]
        generates $\bigotimes_{i=1}^n\bfV_{(i)}^{\otimes a_i}$ as a $\bfG$-module, and $v$ has weight $(1^{a_1},\ldots,1^{a_n})$. For any $T_{\underline{b}}$ with $\underline{a}>\underline{b}$, there is no weight space with this weight, so $\bigotimes_{i=1}^n\bfV_{(i)}^{\otimes a_i}\subset K_{\underline{a}}$.
\end{proof}

\begin{remark}\label{rmk:Tb kernel}
    If $\underline{a}>\underline{b}>\underline{c}$ with $\abs{\underline{a}}=\abs{\underline{b}}=\abs{\underline{c}}$, then any morphism $f:\underline{a}\to\underline{c}$ in $\scrU$ factors through a morphism $g:\underline{a}\to\underline{b}$. In particular, $\ker(T_{\underline{a}}\to T_{\underline{b}})\subset\ker(T_{\underline{a}}\to T_{\underline{c}})$, and so $K_{\underline{a}}$ can equivalently be given by $\bigcap\ker(T_{\underline{a}}\to T_{\underline{b}})$, where $\underline{a}>\underline{b}$ is a cover relation in the dominance order.
\end{remark}

\subsection{Simple representations}

Recall that $\bfL\cong\prod_{i=1}^n\GL$ denotes the Levi subgroup of $\bfG$, and let $\bfU=\bfG/\bfL$ denote the unipotent radical. There is a surjection of groups $\bfG\to\bfL$, so every simple $\bfL$-module pulls back to a simple $\bfG$-module. The simple $\bfL$-modules are indexed by tuples of partitions $\underline{\lambda}$, and we denote them by $S_{\underline{\lambda}}$:
\[S_{\underline{\lambda}}=\bfS_{\lambda^1}(\bfV_{(1)})\otimes\cdots\otimes\bfS_{\lambda^n}(\bfV_{(n)}),\]
where $\bfS_{\lambda^i}(-)$ denotes the Schur functor corresponding to the partition $\lambda^i$. We now show that these are precisely the simple polynomial representations of $\bfG$.

\begin{proposition}
    Every simple object of $\Rep(\bfG)$ is of the form $S_{\underline{\lambda}}$.
\end{proposition}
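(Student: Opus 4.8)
The plan is to show that any simple object $V$ of $\Rep(\bfG)$ is, as a $\bfG$-representation, pulled back from a simple $\bfL$-representation via the quotient $\bfG\to\bfL$; since the simple $\bfL$-modules (which are polynomial, hence of the form $\bfS_{\lambda^1}(\bfV_{(1)})\otimes\cdots\otimes\bfS_{\lambda^n}(\bfV_{(n)})$) are exactly the $S_{\underline{\lambda}}$, this will finish the argument. Concretely, it suffices to prove that the unipotent radical $\bfU$ acts trivially on $V$: then the $\bfG$-action factors through $\bfL$, and a simple $\bfG$-module that is $\bfU$-trivial is the same thing as a simple $\bfL$-module.

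First I would record that $V$, being polynomial, embeds as a subquotient of a finite sum of tensor powers $\bfV^{\otimes a}$, and each such tensor power has a finite filtration whose graded pieces are the $\bfG$-modules $K_{\underline{a}}=\bigotimes_i \bfV_{(i)}^{\otimes a_i}$ from Lemma~\ref{lem:homG Ta}(2): indeed $\bfV^{\otimes a}=T_{(0,\ldots,0,a)}$, and the maps $T_{\underline{a}}\to T_{\underline{b}}$ give a filtration of each $T_{\underline{a}}$ with graded pieces $K_{\underline{b}}$ for $\underline{b}\le\underline{a}$. Each $K_{\underline{b}}$ is pulled back from $\bfL$ (it is a tensor product of Schur-type functors of the graded pieces $\bfV_{(i)}$, on which $\bfU$ acts trivially), hence is a polynomial $\bfL$-module, hence is a finite direct sum of the $S_{\underline{\lambda}}$'s. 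So every polynomial $\bfG$-module has a finite filtration whose graded pieces are among the $S_{\underline{\lambda}}$, and in particular the simple constituents of any $\bfV^{\otimes a}$ all lie in this list. This already shows every simple object of $\Rep(\bfG)$ is some $S_{\underline{\lambda}}$, without even needing to argue $\bfU$-triviality abstractly.

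The one point requiring care is verifying that the $T_{\underline{a}}$ genuinely admit the claimed filtration with graded pieces $K_{\underline{b}}$: for this I would argue by descending induction on $\underline{a}$ in the dominance order at fixed $|\underline{a}|$. Pick a cover relation $\underline{a}>\underline{b}$; by Remark~\ref{rmk:Tb kernel} and Lemma~\ref{lem:homG Ta}(1) the maps $T_{\underline{a}}\to T_{\underline{b}}$ for all such covers fit together, $K_{\underline{a}}$ is their common kernel by definition, and the quotient $T_{\underline{a}}/K_{\underline{a}}$ embeds into a finite sum of $T_{\underline{b}}$'s with $\underline{b}<\underline{a}$, to which the inductive hypothesis applies. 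Combined with $K_{\underline{a}}=\bigotimes_i\bfV_{(i)}^{\otimes a_i}$ being pulled back from $\bfL$, this yields the filtration.

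The main obstacle is the bookkeeping in the filtration step: one must make sure that $T_{\underline{a}}/K_{\underline{a}}\hookrightarrow\bigoplus_{\underline{a}>\underline{b}\ \mathrm{cover}}T_{\underline{b}}$ really is injective (immediate from the definition of $K_{\underline{a}}$) and then that the $\bfL$-module $K_{\underline{b}}=\bigotimes_i\bfV_{(i)}^{\otimes a_i}$ decomposes into $S_{\underline{\lambda}}$'s (this is Schur--Weyl for each $\GL$ factor applied to the $\bfL\cong\prod_i\GL$ action). Once those two facts are in hand the induction closes and the proposition follows. An alternative, slightly slicker route avoiding the filtration: directly show $\bfU$ acts trivially on a simple $V$ by a weight/highest-weight argument — a highest weight vector for $\bfL$ generates $V$, is annihilated by $\bfU$ for weight reasons exactly as in the proof of Lemma~\ref{lem:homG Ta}, and hence $\bfU V = 0$ by simplicity — but I would present the filtration argument as the primary one since it reuses machinery already set up.
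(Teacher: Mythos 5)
Your primary argument is essentially the paper's proof: reduce to showing the simple constituents of each $T_{\underline{a}}$ are $S_{\underline{\lambda}}$'s, use the exact sequence $0\to K_{\underline{a}}\to T_{\underline{a}}\to\bigoplus_{\underline{a}>\underline{b}}T_{\underline{b}}$ together with the identification $K_{\underline{a}}=\bigotimes_i\bfV_{(i)}^{\otimes a_i}$ (an $\bfL$-module, hence semisimple with constituents $S_{\underline{\lambda}}$), and induct along the dominance order. The only slip is notational: $\bfV^{\otimes a}=T_{(a,0,\ldots,0)}$, not $T_{(0,\ldots,0,a)}$ (the latter is $\bfV_{(n)}^{\otimes a}$ and serves as the base case of the induction); this does not affect the argument.
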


\begin{proof}
    It suffices to show that every simple constituent of $T_{\underline{a}}$ is of the form $S_{\underline{\lambda}}$. We have an exact sequence
    \begin{equation}\label{eqn:ses inj}
        0\to K_{\underline{a}}\to T_{\underline{a}}\to\bigoplus_{\underline{a}\to\underline{b}}T_{\underline{b}},
    \end{equation}
    where the direct sum is over all non-isomorphisms in $\scrU$ with $\abs{\underline{a}}=\abs{\underline{b}}$ and $\underline{a}>\underline{b}$ a cover relation.
    By Lemma~\ref{lem:homG Ta}, the action of $\bfG$ on $K_{\underline{a}}$ is given by the action of $\bfL$. The simple constituents of $K_{\underline{a}}$ are therefore given by its decomposition as a $\bfL$-module, so they are of the form $S_{\underline{\lambda}}$.

    Among the set $\{\underline{b}:\abs{\underline{a}}=\abs{\underline{b}}\}$, the tuple $(0,\ldots,0,\abs{\underline{a}})$ is minimal in the dominance order, and $T_{(0,\ldots,0,\abs{\underline{a}})}=(\bfV_{(n)})^{\otimes \abs{\underline{a}}}$ has simple factors given by $\bfS_{\lambda}(\bfV_{(n)})$. Thus, by induction, the simple constituents of $T_{\underline{a}}$ are of the form $S_{\underline{\lambda}}$ as well.
\end{proof}

\begin{corollary}\label{cor:ss Gmod}
    A polynomial $\bfG$-representation $V$ is semisimple if and only if $\bfU$ acts trivially on $V$.
\end{corollary}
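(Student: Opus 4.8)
The plan is to prove both directions of the equivalence. For the forward direction, suppose $V$ is semisimple, so $V$ is a direct sum of simple objects $S_{\underline{\lambda}}$. Since each $S_{\underline{\lambda}}$ is pulled back from the Levi quotient $\bfL=\bfG/\bfU$, the unipotent radical $\bfU$ acts trivially on each simple summand, hence on $V$. The key input here is the preceding proposition classifying the simples as exactly the $S_{\underline{\lambda}}$, together with the observation that these factor through $\bfG\to\bfL$.

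For the reverse direction, suppose $\bfU$ acts trivially on $V$. Then the $\bfG$-action on $V$ factors through $\bfL\cong\prod_{i=1}^n\GL$. So it suffices to argue that any polynomial representation of $\bfL$ on which we regard the action as coming from $\bfG$ is automatically semisimple as a $\bfG$-representation. This reduces to two facts: first, that the category of polynomial representations of $\GL$ is semisimple (this is the classical statement, implicit in the $n=1$ case of Schur--Weyl duality referenced after Theorem~\ref{thm:main}), so polynomial $\bfL$-representations are semisimple as $\bfL$-modules; and second, that a decomposition $V=\bigoplus S_{\underline{\lambda}}^{\oplus m_{\underline{\lambda}}}$ as $\bfL$-modules is also a decomposition as $\bfG$-modules once $\bfU$ acts trivially, since then the $\bfG$-submodule structure coincides with the $\bfL$-submodule structure. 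Hence $V$ is semisimple in $\Rep(\bfG)$.

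The main subtlety to address carefully is that ``polynomial representation of $\bfG$'' and ``polynomial representation of $\bfL$'' need to be compatible: a polynomial $\bfG$-representation on which $\bfU$ acts trivially should be a polynomial $\bfL$-representation in the sense that its simple constituents are among the $S_{\underline{\lambda}}$, and conversely. This compatibility is exactly what the preceding proposition (simples of $\Rep(\bfG)$ are the $S_{\underline{\lambda}}$) and the restriction-to-$\bfL$ remarks in \S\ref{subsec:G invar} provide: restricting a polynomial $\bfG$-representation to $\bfL$ yields a polynomial $\prod_{i=1}^n\GL$-representation. So the only real work is assembling these already-established pieces; I expect no genuine obstacle, just the bookkeeping of checking that $\bfG$-subrepresentations of a $\bfU$-trivial module are the same as $\bfL$-subrepresentations.
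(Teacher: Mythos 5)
Your proposal is correct and follows essentially the same route as the paper: the forward direction uses that each simple $S_{\underline{\lambda}}$ is pulled back along $\bfG\to\bfL$ so $\bfU$ acts trivially on a direct sum of simples, and the reverse direction uses that a $\bfU$-trivial action factors through $\bfL$, whose polynomial representations are semisimple. The extra bookkeeping you flag (that $\bfG$-submodules coincide with $\bfL$-submodules once $\bfU$ acts trivially) is correct and is left implicit in the paper's one-line argument.
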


\begin{proof}
    If $V$ is semisimple, then since $\bfU$ acts trivially on each simple decomposition factor of $V$, $\bfU$ acts trivially on $V$. If $\bfU$ acts trivially on $V$, then the action of $\bfG$ is given by the action of $\bfL$, and so $V$ is semisimple.
\end{proof}

\begin{corollary}\label{cor:G socle Ta}
    The socle of $T_{\underline{a}}$ is $K_{\underline{a}}$.
\end{corollary}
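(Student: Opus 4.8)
The plan is to show the two inclusions $K_{\underline{a}}\subseteq\mathrm{soc}(T_{\underline{a}})$ and $\mathrm{soc}(T_{\underline{a}})\subseteq K_{\underline{a}}$. The second inclusion is the easy direction: by definition $K_{\underline{a}}=\bigcap\ker(T_{\underline{a}}\to T_{\underline{b}})$ over the finitely many non-isomorphisms $\underline{a}\to\underline{b}$ in $\scrU$ (Remark~\ref{rmk:Tb kernel} lets us restrict to cover relations), so if $W\subseteq T_{\underline{a}}$ is a simple submodule not contained in $K_{\underline{a}}$, then some composite $W\hookrightarrow T_{\underline{a}}\to T_{\underline{b}}$ is nonzero, hence injective by simplicity of $W$. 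Thus $W$ is a simple submodule of $T_{\underline{b}}$ with $\abs{\underline{b}}=\abs{\underline{a}}$ and $\underline{b}<\underline{a}$ strictly; iterating (the dominance order has finite descending chains among tuples of fixed sum) eventually lands a copy of $W$ inside $T_{(0,\ldots,0,\abs{\underline{a}})}=\bfV_{(n)}^{\otimes\abs{\underline{a}}}$, which is semisimple with $\bfU$ acting trivially. Pulling this backward, $\bfU$ must act trivially on $W$ itself. But then, using that the image of $W$ in the relevant $T_{\underline{b}}$ is a simple $\bfL$-submodule of the type $S_{\underline{\lambda}}$, one checks $W$ cannot actually embed in $T_{\underline{a}}$ avoiding $K_{\underline{a}}$: any $\bfG$-map from a $\bfU$-trivial module into $T_{\underline{a}}$ must land in the $\bfU$-invariants, and I will argue $(T_{\underline{a}})^{\bfU}=K_{\underline{a}}$ by the same highest-weight-vector computation as in Lemma~\ref{lem:homG Ta}(2) (the generating vector $v$ of $\bigotimes\bfV_{(i)}^{\otimes a_i}$ is $\bfU$-fixed, while any vector in $T_{\underline{a}}$ of weight not of the form $(1^{a_1},\ldots,1^{a_n})$ is moved by a suitable unipotent element). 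This gives $\mathrm{soc}(T_{\underline{a}})\subseteq(T_{\underline{a}})^{\bfU}=K_{\underline{a}}$.

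For the first inclusion, $K_{\underline{a}}$ is a $\bfG$-submodule on which $\bfU$ acts trivially by Lemma~\ref{lem:homG Ta}(2), hence it is semisimple by Corollary~\ref{cor:ss Gmod}, hence contained in the socle. Combining the two inclusions gives $\mathrm{soc}(T_{\underline{a}})=K_{\underline{a}}$.

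The cleanest route actually collapses both directions: it suffices to prove $(T_{\underline{a}})^{\bfU}=K_{\underline{a}}$, since then $K_{\underline{a}}$ is $\bfU$-trivial (so semisimple, so inside the socle by Corollary~\ref{cor:ss Gmod}) and conversely every simple submodule of $T_{\underline{a}}$ is $\bfU$-trivial (being of the form $S_{\underline{\lambda}}$, on which $\bfU$ acts trivially) hence contained in $(T_{\underline{a}})^{\bfU}=K_{\underline{a}}$. So the core of the proof is the identification $(T_{\underline{a}})^{\bfU}=K_{\underline{a}}$. The inclusion $K_{\underline{a}}=\bigotimes_i\bfV_{(i)}^{\otimes a_i}\subseteq(T_{\underline{a}})^{\bfU}$ is immediate since $\bfU$, being the unipotent radical, acts trivially on each graded piece $\bfV_{(i)}$ and hence on their tensor product. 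For the reverse inclusion one argues on weights: decompose $T_{\underline{a}}$ under the maximal torus of $\bfL$; the weight $(1^{a_1},\ldots,1^{a_n})$ component is exactly $K_{\underline{a}}$, and for any basis tensor $e_{i_1,j_1}\otimes\cdots\otimes e_{i_{\abs{\underline{a}}},j_{\abs{\underline{a}}}}$ whose multiset of first indices is not $\{1^{a_1},\ldots,n^{a_n}\}$, there is a factor $e_{i,j}$ with $i>$ the "correct" block index, and applying a root subgroup element $I+e_{i',j}$ (with $i'<i$) produces a nonzero term outside the span of $\bfU$-fixed vectors — so no nonzero $\bfU$-invariant can have support outside weight $(1^{a_1},\ldots,1^{a_n})$, forcing $(T_{\underline{a}})^{\bfU}\subseteq K_{\underline{a}}$.

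The main obstacle I anticipate is making the weight/unipotent computation fully rigorous in the infinite-rank setting: one must be careful that "applying a unipotent element kills nothing it shouldn't" genuinely holds for the specific block structure of $\bfG$ — in particular that the off-diagonal root subgroups sending $e_{i,j}\mapsto e_{i,j}+e_{i',j}$ (for $i'<i$ in the appropriate block) suffice to detect every non-top weight vector, which is the same bookkeeping already carried out in the proof of Lemma~\ref{lem:homG Ta} and should transfer essentially verbatim.
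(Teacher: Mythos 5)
Your ``cleanest route'' is exactly the paper's argument: $K_{\underline{a}}$ is semisimple because $\bfU$ acts trivially on it (Corollary~\ref{cor:ss Gmod}), and it is maximal among semisimple submodules because any larger submodule carries a nontrivial $\bfU$-action. The identification $(T_{\underline{a}})^{\bfU}=K_{\underline{a}}$ that you spell out via the weight/unipotent computation is a correct (and welcome) justification of the step the paper leaves implicit, so the proposal is correct and essentially the same proof.
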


\begin{proof}
    By Lemma~\ref{lem:homG Ta}, $K_{\underline{a}}$ is semisimple. Any submodule of $T_{\underline{a}}$ properly containing $K_{\underline{a}}$ has a nontrivial $\bfU$-action, and so by Corollary~\ref{cor:ss Gmod}, $K_{\underline{a}}$ is the maximal semisimple submodule of $T_{\underline{a}}$.
\end{proof}

\subsection{Injectives and projectives}

Let $\underline{\lambda}$ be an $n$-tuple of partitions and let $\underline{a}=(\abs{\lambda^1},\ldots,\abs{\lambda^n})$.
Define the $\bfG$-module $T_{\underline{\lambda}}$ by $T_{\underline{\lambda}}=\Hom_{\frakS_{\underline{a}}}(\bfM_{\underline{\lambda}},T_{\underline{a}}).$
This is therefore equal to
\[T_{\underline{\lambda}}=\bfS_{\lambda^1}(\bfV)\otimes\bfS_{\lambda^2}(\bfV/\bfV_1)\otimes\cdots\otimes\bfS_{\lambda^n}(\bfV/\bfV_{n-1}).\]
Similarly, define the $\bfG$-module $U_{\underline{\lambda}}$ by $U_{\underline{\lambda}}=\Hom_{\frakS_{\underline{a}}}(\bfM_{\underline{\lambda}},U_{\underline{a}})$, so it is equal to
\[U_{\underline{\lambda}}=\bfS_{\lambda^1}(\bfV_1)\otimes\bfS_{\lambda^2}(\bfV_2)\otimes\cdots\otimes\bfS_{\lambda^n}(\bfV_n).\]
It will follow from the equivalence of categories $\Rep(\bfG)\cong\Mod_{\scrU}$ that the $T_{\underline{\lambda}}$'s constitute a complete irredundant set of indecomposable injectives and that the $U_{\underline{\lambda}}$'s constitute a complete irredundant set of indecomposable projectives.

\section{Equivalence of the categories}\label{sec:equiv}

In this section, we prove the main theorem (Theorem~\ref{thm:main}) on the equivalence of tensor categories. We then give some consequences of the equivalence, including computing $\Ext^1$ groups among the simple objects and describing a universal property.

\subsection{Equivalence between $\Rep(\bfG)$ and $\Mod_{\scrU}$}\label{subsec:eq rep(G) Mod_U}

Recall the representations $T_{\underline{a}}\in\Rep(\bfG)$ defined in \S\ref{subsec:G prin obs}. They define a functor $\mathscr{T}:\scrU\to\Rep(\bfG)$ given by $\underline{a}\mapsto T_{\underline{a}}$.

\begin{proposition}
    We have a covariant equivalence of tensor categories $\Mod_{\scrU}\to\Rep(\bfG)$.
\end{proposition}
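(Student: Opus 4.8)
The plan is to use the structured Hom construction from \S\ref{subsec:structured hom} applied to the functor $\mathscr{T}:\scrU\to\Rep(\bfG)$, $\underline{a}\mapsto T_{\underline{a}}$. Concretely, define a functor $\Phi:\Mod_\scrU\to\Rep(\bfG)$ by $\Phi(M)=M^\vee\odot\mathscr{T}$ (equivalently, the structured tensor product against the $\scrD$-module $M^\vee$), and a functor $\Psi:\Rep(\bfG)\to\Mod_\scrU$ by $\Psi(V)=(\underline{a}\mapsto\Hom_\bfG(\mathscr{T}(\underline{a}),V)^*)$ or, dually, by taking $H_{\underline{a}}$-invariants followed by the appropriate $G_{\underline{a}}$-coinvariants; one checks these are well-defined, land in the correct categories (finite length on the $\scrU$ side, polynomial on the $\bfG$ side), and are exact or at least have the needed exactness. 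The main work is to show $\Phi$ and $\Psi$ are mutually quasi-inverse, and that $\Phi$ (or $\Psi$) is monoidal.

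First I would verify that $\Phi$ sends principal projectives to the expected objects: by Lemma~\ref{lem:structured hom prin proj} (in its covariant form) we get $\Phi(\P_{\underline{a}})\cong T_{\underline{a}}$, and more generally $\Phi(\P_{\underline{\lambda}})\cong T_{\underline{\lambda}}$ by passing to the $\bfM_{\underline{\lambda}}$-isotypic piece under the $\frakS_{\underline{a}}$-action, which commutes with everything in sight. Next, using Lemma~\ref{lem:homG Ta}(1), which identifies the $\scrU$-module $\underline{b}\mapsto\Hom_\bfG(T_{\underline{a}},T_{\underline{b}})$ with $\P_{\underline{a}}$, together with the adjunction $\Hom_\scrU(M,\Hom_\mathscr{C}(N,\mathcal{K}))=\Hom_\mathscr{C}(N,\Hom(M,\mathcal{K}))$, I would show $\Psi\circ\Phi(\P_{\underline{a}})\cong\P_{\underline{a}}$ and that the natural transformation $\mathrm{id}\to\Psi\circ\Phi$ is an isomorphism on principal projectives. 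Since every object of $\Mod_\scrU$ has a (finite) presentation by direct sums of principal projectives (Corollary~\ref{cor:classifyig indecomp injs}) and $\Phi,\Psi$ are suitably exact, a standard five-lemma/right-exactness argument propagates this to all of $\Mod_\scrU$, giving $\Psi\circ\Phi\cong\mathrm{id}$. For the other composite, I would show $\Phi$ is essentially surjective and fully faithful: essential surjectivity follows because $\Rep(\bfG)$ is generated under subquotients and finite direct sums by the $T_{\underline{a}}$'s (indeed by tensor powers of $\bfV$, and $\bfV^{\otimes a}=\bigoplus$ of $T_{\underline{b}}$'s with $|\underline{b}|=a$ — or at least each such tensor power is filtered by them, using the flag on $\bfV$), and full faithfulness is exactly the content of Lemma~\ref{lem:homG Ta}(1) extended from principal projectives to arbitrary modules via presentations.

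For the tensor structure, I would invoke the last displayed assertion of \S\ref{subsec:structured hom}: the functor $N\mapsto N\odot\mathcal{K}$ is a tensor functor for any $\mathcal{K}:\scrU\to\mathscr{C}$ (by \cite[Proposition 2.1.16]{SSstabpatterns}), provided $\mathcal{K}$ itself is symmetric monoidal. Here $\mathscr{T}$ is symmetric monoidal because $T_{\underline{a}}\otimes T_{\underline{b}}\cong T_{\underline{a+b}}$ canonically (tensoring the defining tensor-power descriptions) and this is compatible with the $\amalg$ structure on $\scrU$; combined with Lemma~\ref{lem:tensor prin proj} ($\P_{\underline{a}}\otimes\P_{\underline{b}}\cong\P_{\underline{a+b}}$) this shows $\Phi$ carries the Day convolution tensor product to the tensor product on $\Rep(\bfG)$, and one checks the associativity and symmetry constraints correspond.

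The step I expect to be the main obstacle is the exactness/propagation argument: $\Phi=(-)^\vee\odot\mathscr{T}$ is built from an inverse limit and is only left-exact a priori, so deducing $\Psi\circ\Phi\cong\mathrm{id}$ and full faithfulness of $\Phi$ on all objects — not just projectives — requires care. The clean way around this is to work on the projective side: present an arbitrary $M$ as the cokernel of a map $\bigoplus\P_{\underline{b}}\to\bigoplus\P_{\underline{a}}$, apply $\Phi$, and use that $\Phi$ is right-exact (being a left adjoint, or directly from the colimit description of $\odot$), so $\Phi(M)=\coker(\bigoplus T_{\underline{b}}\to\bigoplus T_{\underline{a}})$; the identification of $\Hom$-spaces in Lemma~\ref{lem:homG Ta}(1) then pins down both $\Phi$ and $\Psi\Phi$ on $M$. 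The other delicate point is checking that $\Phi$ actually lands in \emph{polynomial} $\bfG$-representations and $\Psi$ in \emph{finite length} $\scrU$-modules; both follow from $T_{\underline{a}}$ being a quotient of $\bfV^{\otimes|\underline{a}|}$ (hence polynomial) and from the finiteness built into the structured Hom (the inverse limit is finite since $\scrU$ decomposes into finite directed categories), respectively.
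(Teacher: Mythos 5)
Your basic construction is the paper's: the structured tensor/Hom against $\mathscr{T}:\underline{a}\mapsto T_{\underline{a}}$, with Lemma~\ref{lem:structured hom prin proj} giving $\Phi(\P_{\underline{a}})=T_{\underline{a}}$ and Lemma~\ref{lem:homG Ta}(1) identifying the Hom spaces. But your verification that this is an equivalence has two genuine problems. First, a variance error: $M\mapsto M^\vee\odot\mathscr{T}=\Hom(M,\mathscr{T})$ is \emph{contravariant} in $M$ (the adjunction in \S\ref{subsec:structured hom} is a mutual right adjunction, like vector-space duality), so it takes cokernels to kernels and is certainly not a left adjoint or right-exact; your claim that $\Phi(M)=\coker(\bigoplus T_{\underline{b}}\to\bigoplus T_{\underline{a}})$ contradicts your own earlier (correct) remark that it is an inverse limit. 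As written you would at best get an equivalence $(\Mod_\scrU)^{\mathrm{op}}\simeq\Rep(\bfG)$; to get the stated covariant equivalence one precomposes with $\tau_!:\Mod_\scrU\to\Mod_\scrD$ (as the paper does), not with $(-)^\vee$.

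Second, and more seriously, your propagation and essential-surjectivity arguments are circular. To extend full faithfulness from principal projectives to all of $\Mod_\scrU$ via presentations, you need $\Hom_\bfG(-,T_{\underline{a}})$ to be exact on the relevant sequences, i.e., you need the $T_{\underline{a}}$ to be injective in $\Rep(\bfG)$; likewise, deducing essential surjectivity from ``$\Rep(\bfG)$ is generated under subquotients by the $T_{\underline{a}}$'' needs the essential image to be closed under subquotients, i.e., exactness of $\Phi$. Neither fact is available at this stage --- injectivity of $T_{\underline{a}}$ is listed in the paper as a \emph{consequence} of the equivalence. The paper sidesteps this by invoking the criterion of \cite[Corollary 2.1.12]{SSstabpatterns}, which reduces the equivalence to two socle conditions: that $\Hom_{\frakS_{\underline{a}}}(\bfM_{\underline{\lambda}},K_{\underline{a}})$ is simple, and that each simple $S_{\underline{\lambda}}$ admits a nonzero map to $T_{\underline{a}}$ for a unique $\underline{a}$. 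These are exactly what Lemma~\ref{lem:homG Ta}(2) ($K_{\underline{a}}=\bigotimes_i\bfV_{(i)}^{\otimes a_i}$) and Corollary~\ref{cor:G socle Ta} (the socle of $T_{\underline{a}}$ is $K_{\underline{a}}$) provide. Your proposal never mentions $K_{\underline{a}}$ or the socle computation, so the essential input that makes the argument non-circular is missing.
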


\begin{proof}
    We first show that $\Mod_{\scrD}$ is covariantly equivalent to $\Rep(\bfG)$. Recall from \S\ref{subsec:structured hom} that the structured tensor product $M\mapsto M\odot\mathscr{T}$ defines a tensor functor $\Mod_{\scrD}\to\Rep(\bfG)$. By \cite[Corollary 2.1.12]{SSstabpatterns}, to show that this functor is an equivalence, it suffices to prove the following:
    \begin{enumerate}
        \item For any $\underline{\lambda}$ with $\underline{a}=(\abs{\lambda^i})$, the polynomial $\bfG$-representation $\Hom_{\frakS_{\underline{a}}}(\bfM_{\underline{\lambda}},K_{\underline{a}})$ is simple.
        \item For every simple object $S_{\underline{\lambda}}$ of $\Rep(\bfG)$, there exists a unique tuple $\underline{a}$ such that $\Hom_{\bfG}(S_{\underline{\lambda}},T_{\underline{a}})$ is nonzero.
    \end{enumerate}
    By Lemma~\ref{lem:homG Ta},
    \[\Hom_{\frakS_{\underline{a}}}(\bfM_{\underline{\lambda}},K_{\underline{a}})=\bfS_{\lambda^1}(\bfV_{(1)})\otimes\cdots\otimes\bfS_{\lambda^n}(\bfV_{(n)})=S_{\underline{\lambda}}.\]
    By Lemma~\ref{cor:G socle Ta}, $S_{\underline{\lambda}}$ is a submodule of $T_{\underline{a}}$ if and only if $\underline{a}=(\abs{\lambda^1},\ldots,\abs{\lambda^n})$.

    The covariant equivalence $\Mod_{\scrU}\cong\Rep(\bfG)$ now follows from Proposition~\ref{prop:ModFBn selfdual}.
\end{proof}

We explicitly describe the equivalence using the constructions from \S\ref{subsec:structured hom}. The equivalence with $\Mod_{\scrD}$ is given by the functors
\begin{align*}
    &\Mod_{\scrD}\to\Rep(\bfG):M\mapsto \Hom(M^\vee,\mathscr{T})=M\odot\mathscr{T},\\
    &\Rep(\bfG)\to\Mod_{\scrD}:V\mapsto \Hom_{\bfG}(V,\mathscr{T})^\vee.
\end{align*}
Let $\mathcal{F}:\Mod_{\scrU}\to\Rep(\bfG)$ and $\mathcal{G}:\Rep(\bfG)\to\Mod_{\scrU}$ be the composition of these functors with $\tau_!$ and $\tau^*$ respectively. Then $\mathcal{F},\mathcal{G}$ give the equivalence $\Mod_{\scrU}\cong\Rep(\bfG)$.

\begin{lemma}\label{lem:structured hom prin inj}
    For any $i=1,\ldots,n$, let $e_i$ denote the tuple with $1$ in the $i^{\text{th}}$ position and $0$'s elsewhere. Then there is a natural isomorphism $\Hom(\bfI_{e_i},\mathscr{T})=\bfV_i$.
\end{lemma}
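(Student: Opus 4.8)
The plan is to realize $\bfI_{e_i}$ explicitly via a short exact sequence of principal projectives and then apply the structured Hom functor, reducing everything to Lemma~\ref{lem:structured hom prin proj}.

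\textbf{Step 1: a resolution of $\bfI_{e_i}$.} From the formula $\bfI_{\underline{a}}(T)=\C[\Hom_\scrU(T,\underline{a})]^*$ one sees that $\bfI_{e_i}$ is supported on the one-element weighted sets: $\bfI_{e_i}(e_j)=\C$ for $j\le i$ and $0$ otherwise, and every structure map between nonzero values is an isomorphism; in particular $\bfI_{e_i}$ is generated by its value at $e_1$. Since $\Hom_\scrU(\P_{e_1},\bfI_{e_i})=\bfI_{e_i}(e_1)=\C$, there is a nonzero map $\P_{e_1}\to\bfI_{e_i}$, and because it is an isomorphism at $e_1$ and $\bfI_{e_i}$ is generated there, it is surjective. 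Computing the kernel degreewise, it is supported on the $e_j$ with $j>i$, where it equals $\C$ with identity structure maps, so it is isomorphic to $\P_{e_{i+1}}$, the inclusion $\P_{e_{i+1}}\hookrightarrow\P_{e_1}$ being $\P_u$ for the unique morphism $u\colon e_1\to e_{i+1}$ in $\scrU$ (one checks this via $\Hom_\scrU(\P_{e_{i+1}},\P_{e_1})=\P_{e_1}(e_{i+1})=\C[\Hom_\scrU(e_1,e_{i+1})]$, which is one-dimensional). Thus for $i<n$ we have a short exact sequence
\[0\to\P_{e_{i+1}}\xrightarrow{\ \P_u\ }\P_{e_1}\to\bfI_{e_i}\to 0,\]
and for $i=n$ the kernel vanishes (there is no object $e_{n+1}$), so $\bfI_{e_n}\cong\P_{e_1}$; we may set $\P_{e_{n+1}}=0$ to treat both cases uniformly.

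\textbf{Step 2: apply $\Hom(-,\mathscr{T})$.} The contravariant functor $M\mapsto\Hom(M,\mathscr{T})$ is left exact: it is a right adjoint (to $N\mapsto\Hom_{\Rep(\bfG)}(N,\mathscr{T})$, cf.\ \S\ref{subsec:structured hom}), hence it carries the presentation $\bfI_{e_i}=\coker(\P_u)$ to a kernel; alternatively this is immediate from the inverse-limit definition, since finite limits and $\Hom_\C(-,T_S)$ are left exact. Applying it to the sequence above yields
\[\Hom(\bfI_{e_i},\mathscr{T})=\ker\bigl(\Hom(\P_{e_1},\mathscr{T})\to\Hom(\P_{e_{i+1}},\mathscr{T})\bigr).\]
By Lemma~\ref{lem:structured hom prin proj}, $\Hom(\P_{e_1},\mathscr{T})=T_{e_1}=\bfV$ and $\Hom(\P_{e_{i+1}},\mathscr{T})=T_{e_{i+1}}=\bfV/\bfV_i$, and by naturality of that identification the map between them is $\mathscr{T}(u)\colon\bfV\to\bfV/\bfV_i$. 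By the description of $\mathscr{T}$ on morphisms in \S\ref{subsec:G prin obs}, $\mathscr{T}(u)$ is precisely the canonical $\bfG$-equivariant surjection, whose kernel is $\bfV_i$. Hence $\Hom(\bfI_{e_i},\mathscr{T})=\bfV_i$ as $\bfG$-representations (and for $i=n$ this reads $\Hom(\bfI_{e_n},\mathscr{T})=\Hom(\P_{e_1},\mathscr{T})=\bfV=\bfV_n$).

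\textbf{Main obstacle.} The delicate point is Step 1: identifying the kernel of $\P_{e_1}\to\bfI_{e_i}$ with $\P_{e_{i+1}}$ and, crucially, verifying that the connecting inclusion is $\P_u$ for $u\colon e_1\to e_{i+1}$, so that the induced map in Step 2 is exactly the quotient $\bfV\to\bfV/\bfV_i$ rather than some other surjection. If one prefers to avoid the resolution, the same conclusion follows by directly unwinding the inverse limit: an element of $\Hom(\bfI_{e_i},\mathscr{T})$ is a compatible system $(v_j\in\bfV/\bfV_{j-1})_{1\le j\le i}$ under the canonical surjections, and the morphisms $e_j\to e_{i+1}$ in $\scrU$ — whose targets have vanishing $\bfI_{e_i}$-value — force each $v_j$ to die in $\bfV/\bfV_i$; this identifies the system with its leading entry $v_1\in\bfV_i\subset\bfV$, and the identification is plainly $\bfG$-equivariant.
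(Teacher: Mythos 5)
Your argument is correct, and your main route differs from the paper's. The paper proves the lemma directly from the mapping property of the inverse limit: a map $V\to\Hom(\bfI_{e_i},\mathscr{T})$ is a compatible family $f_j:V\to\Hom(\bfI_{e_i}(e_j),\bfV/\bfV_{j-1})$, all determined by $f_1$, and the vanishing $\bfI_{e_i}(e_j)=0$ for $j>i$ forces $f_1$ to land in $\ker(\bfV\twoheadrightarrow\bfV/\bfV_i)=\bfV_i$; so $\bfV_i$ satisfies the same universal property. This is exactly the "direct unwinding" you sketch as an alternative at the end, so your fallback coincides with the paper. Your primary argument instead builds the presentation $0\to\P_{e_{i+1}}\xrightarrow{\P_u}\P_{e_1}\to\bfI_{e_i}\to 0$ (with $\bfI_{e_n}\cong\P_{e_1}$ when $i=n$) and applies the left exactness of $\Hom(-,\mathscr{T})$ together with Lemma~\ref{lem:structured hom prin proj}. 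All the steps check out: $\bfI_{e_i}$ is indeed supported on $e_1,\ldots,e_i$ with value $\C$ and invertible structure maps, the kernel computation is right, and $\Hom_\scrU(\P_{e_{i+1}},\P_{e_1})$ is one-dimensional so the inclusion is a nonzero multiple of $\P_u$ (the scalar is irrelevant to the kernel), whence the induced map is the canonical surjection $\bfV\to\bfV/\bfV_i$ by naturality of the Yoneda-type identification. What your approach buys is an explicit projective presentation of $\bfI_{e_i}$, which is of independent interest (it makes the finite projective dimension of these injectives visible); what it costs is the extra bookkeeping of pinning down the connecting map, which the paper's universal-property argument sidesteps entirely.
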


\begin{proof}
    Let $V$ be an object of $\Rep(\bfG)$. By the mapping property, giving a map $V\to\Hom(\bfI_{e_i},\mathscr{T})$ is the same as giving maps $f_j:V\to\Hom(\bfI_{e_i}(e_j),\bfV/\bfV_{j-1})$ for $j=1,\ldots,n$ such that the necessary relations hold. In particular, the maps $f_j$ are determined by $f_1$, and since $\bfI_{e_i}(e_j)=0$ for $j>i$, giving the map $f_1$ is the same as giving a map $V\to\ker(\bfV\twoheadrightarrow\bfV/\bfV_i)=\bfV_i$. Thus, $\bfV_i$ satisfies the same universal property as $\Hom(\bfI_{e_i},\mathscr{T})$, which gives the desired identification.
\end{proof}

\begin{lemma}
    For $\underline{a},\underline{\lambda}$, we have
    \[\mathcal{F}(\bfI_{\underline{a}})=T_{\tau(\underline{a})},\quad \mathcal{F}(\bfI_{\underline{\lambda}})=T_{\tau(\underline{\lambda})},\quad \mathcal{F}(\P_{\underline{a}})=U_{\tau(\underline{a})},\quad\mathcal{F}(\P_{\underline{\lambda}})=U_{\tau(\underline{\lambda})}.\]
\end{lemma}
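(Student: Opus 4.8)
The plan is to exploit the adjunction/structured-Hom calculus set up in \S\ref{subsec:structured hom} together with the explicit description of the equivalence $\mathcal{F},\mathcal{G}$. Recall that $\mathcal{F}=(M\mapsto \tau_! M\odot\mathscr{T})$ and, dually, $\mathcal{G}(V)=\tau^*(\Hom_\bfG(V,\mathscr{T})^\vee)$. First I would verify $\mathcal{F}(\bfI_{\underline{a}})=T_{\tau(\underline{a})}$ on the $\scrD$-side: under $\tau_!$ the module $\bfI_{\underline{a}}$ goes to $\bfJ_{\tau(\underline{a})}=(\P_{\tau(\underline{a})})^\vee$, so $\mathcal{F}(\bfI_{\underline{a}})=\Hom((\bfJ_{\tau(\underline{a})})^\vee,\mathscr{T})=\Hom(\P_{\tau(\underline{a})},\mathscr{T})$, and by Lemma~\ref{lem:structured hom prin proj} this is $\mathscr{T}(\tau(\underline{a}))=T_{\tau(\underline{a})}$. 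So the injective-principal case is essentially immediate once one tracks the $\tau$'s and the identity $\bfI_{\underline{a}}=\bfJ_{\tau(\underline{a})}$ and $(\P_{\underline{b}})^\vee=\bfJ_{\underline{b}}$ from the duality subsection.

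Next I would handle $\mathcal{F}(\bfI_{\underline{\lambda}})=T_{\tau(\underline{\lambda})}$ by descending to isotypic pieces. Since $\bfI_{\underline{\lambda}}=\Hom_{\frakS_{\underline{a}}}(\bfM_{\underline{\lambda}},\bfI_{\underline{a}})$ and $\frakS_{\underline{a}}$ acts on $\bfI_{\underline{a}}$ by $\scrU$-module automorphisms, and $\mathcal{F}$ is an (additive, $\C$-linear) equivalence, $\mathcal{F}$ commutes with taking $\bfM_{\underline{\lambda}}$-isotypic components; hence $\mathcal{F}(\bfI_{\underline{\lambda}})=\Hom_{\frakS_{\underline{a}}}(\bfM_{\underline{\lambda}},\mathcal{F}(\bfI_{\underline{a}}))=\Hom_{\frakS_{\tau(\underline{a})}}(\bfM_{\tau(\underline{\lambda})},T_{\tau(\underline{a})})=T_{\tau(\underline{\lambda})}$, where the middle identification uses that $\tau$ reverses the tuple and the $\frakS_{\underline{a}}$-action on $T_{\underline{a}}$ matches the one on $\bfI_{\tau(\underline{a})}$ under $\mathcal{F}$ (this is the definition of $T_{\underline{\lambda}}$ in \S\ref{subsec:G prin obs}).

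For the projective statements, the cleanest route is to invoke uniqueness of adjoints. The functor $\mathcal{F}$ being an equivalence, it has a quasi-inverse $\mathcal{G}$ which is simultaneously its left and right adjoint; since $\P_{\underline{a}}$ corepresents evaluation at $\underline{a}$, i.e. $\Hom_\scrU(\P_{\underline{a}},M)=M(\underline{a})$, the object $\mathcal{F}(\P_{\underline{a}})$ corepresents $V\mapsto \mathcal{G}(V)(\underline{a})=(\tau^*\Hom_\bfG(V,\mathscr{T})^\vee)(\underline{a})=\Hom_\bfG(V,T_{\tau(\underline{a})})^*$, i.e. it represents $V\mapsto\Hom_\bfG(V,T_{\tau(\underline{a})})$. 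One then checks directly — using $T_{\underline{b}}=\bigotimes(\bfV/\bfV_{i-1})^{\otimes b_i}$ and the $\End(\bfV)$-action, or more slickly via Lemma~\ref{lem:structured hom prin inj} and the tensor-functoriality of $M\mapsto\Hom(M,\mathscr{T})$ applied to $\P_{\underline{a}}=\bigotimes_i\bfI_{e_i}^{\otimes a_i}$-type identities on the $\scrD$ side — that $U_{\tau(\underline{a})}=\bigotimes\bfV_i^{\otimes(\tau(\underline{a}))_i}$ has exactly this universal property: $\Hom_\bfG(\bigotimes\bfV_i^{\otimes c_i},T_{\underline{b}})$ has the same dimension and functorial behavior as $\P_{\tau(\underline{c})}(\underline{b})^{*}$-duality predicts, matching the count in Lemma~\ref{lem:homG Ta}(1). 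The isotypic case $\mathcal{F}(\P_{\underline{\lambda}})=U_{\tau(\underline{\lambda})}$ then follows exactly as in the injective case by passing to $\bfM_{\underline{\lambda}}$-isotypic pieces.

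The main obstacle I anticipate is bookkeeping rather than conceptual: making sure the $\tau$-twists, the op-duality $(-)^\vee$, and the $\frakS_{\underline{a}}$-equivariance are all threaded consistently, so that the reversal of tuples appears in the right place and no spurious dualization of partitions creeps in (harmless over $\C$ since Specht modules are self-dual, but worth noting). The projective half is mildly more delicate than the injective half because $U_{\underline{a}}=\bigotimes\bfV_i^{\otimes a_i}$ is not literally $\Hom(\text{something},\mathscr{T})$ on the nose, so one must either argue via the adjunction/representability as above or establish $\mathcal{G}(U_{\tau(\underline{a})})=\P_{\underline{a}}$ by computing $\Hom_\bfG(U_{\tau(\underline{a})},T_{\underline{b}})$ directly and recognizing it as the principal projective $\P_{\underline{a}}$; I would present whichever of these is shorter given what has already been proved.
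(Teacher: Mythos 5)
Your treatment of the injective half and of the passage to $\bfM_{\underline{\lambda}}$-isotypic pieces is exactly the paper's argument: unwind $\mathcal{F}(\bfI_{\underline{a}})=\Hom(\P_{\tau(\underline{a})},\mathscr{T})$ via $\tau_!\bfI_{\underline{a}}=\bfJ_{\tau(\underline{a})}=(\P_{\tau(\underline{a})})^\vee$ and apply Lemma~\ref{lem:structured hom prin proj}. For the projective half, however, your preferred route (representability) is weaker than you suggest. You correctly identify that $\mathcal{F}(\P_{\underline{a}})$ corepresents $V\mapsto\mathcal{G}(V)(\underline{a})=\Hom_\bfG(V,T_{\tau(\underline{a})})^*$, but to conclude you must show that $U_{\tau(\underline{a})}$ corepresents the \emph{same} functor, i.e.\ that $\Hom_\bfG(U_{\tau(\underline{a})},V)\cong\Hom_\bfG(V,T_{\tau(\underline{a})})^*$ naturally in $V$; that is essentially the self-duality of $\Rep(\bfG)$ exchanging the $U$'s and $T$'s, which at this point in the paper is a \emph{consequence} of the lemma being proved, not an available tool. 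Matching against Lemma~\ref{lem:homG Ta}(1) only tests $V=T_{\underline{b}}$, which does not determine a projective object (you would at least need to test against all simples and know the cosocle of $U_{\tau(\underline{a})}$). Your ``slicker'' alternative is the paper's actual proof and is the one to use: $\mathcal{F}(\P_{e_i})=\Hom(\bfI_{e_{n-i+1}},\mathscr{T})=\bfV_{n-i+1}$ by Lemma~\ref{lem:structured hom prin inj}, then $\P_{\underline{a}}=\bigotimes_j\P_{e_j}^{\otimes a_j}$ by Lemma~\ref{lem:tensor prin proj} and tensor-functoriality of $\mathcal{F}$ give $\mathcal{F}(\P_{\underline{a}})=U_{\tau(\underline{a})}$. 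One correction there: the identity you want is $\P_{\underline{a}}=\bigotimes_i\P_{e_i}^{\otimes a_i}$, not ``$\bigotimes_i\bfI_{e_i}^{\otimes a_i}$'' --- $\P_{e_i}$ and $\bfI_{e_i}$ are different $\scrU$-modules (supported on $e_j$ for $j\ge i$ versus $j\le i$); the injective $\bfI_{e_{n-i+1}}$ only appears after applying $\tau_!$ and $(-)^\vee$ inside the structured tensor product.
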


\begin{proof}
We have that $\mathcal{F}(\bfI_{\underline{a}})=\Hom(\P_{\tau(\underline{a})},\mathscr{T})=\mathscr{T}(\tau(\underline{a}))=T_{\tau(\underline{a})}$ by Lemma~\ref{lem:structured hom prin proj}. We also have that $\mathcal{F}(\P_{e_i})=\Hom(\bfI_{e_{n-i+1}},\mathscr{T})=\bfV_{n-i+1}$ by Lemma~\ref{lem:structured hom prin inj}. Then since $\P_{\underline{a}}=\bigotimes_{j=1}^n\P_{e_j}^{\otimes a_j}$
and $\mathcal{F}$ is a tensor functor, we see that $\mathcal{F}(\P_{\underline{a}})=U_{\tau(\underline{a})}$.

The other equalities follow from taking $\bfM_{\underline{\lambda}}$-isotypic pieces of $\bfI_{\underline{a}}$ and $\mathcal{F}(\bfM_{\underline{\lambda}})=S_{\tau(\underline{\lambda})}$-isotypic pieces of $T_{\tau(\underline{\lambda})}$, and similarly for the projective objects.
\end{proof}

\subsection{Category of polynomial functors on flags}\label{subsec:cat A}
Let $\mathscr{A}$ denote the category of polynomial functors $\Flag_n\to\Vec$. In this subsection, we show that objects of $\mathscr{A}$ correspond to sequences of representations of parabolic subgroups of general linear groups that stabilize. This will be used to show the equivalence $\mathscr{A}\cong\Rep(\bfG)$.

Suppose that $F\in\mathscr{A}$. For any tuple $\underline{a}$ of nonnegative integers, let $\C^{\underline{a}}$ denote the following flag:
\[0\subset \C^{a_1}\subset\C^{a_1+a_2}\subset\cdots\subset\C^{\abs{\underline{a}}}.\]
Note that every object in $\Flag_n$ is isomorphic to a unique such $\C^{\underline{a}}$. We have that $F(\C^{\underline{a}})$ is a representation of the parabolic subgroup $G_{\underline{a}}\subset\GL(\C^{\abs{\underline{a}}})$ of $n\times n$ block upper triangular matrices, where the $(i,j)$-block has dimension $a_i\times a_j$. Note that this group is isomorphic to the group $G_{\underline{a}}$ defined in \S\ref{subsec:G invar}. Furthermore, if $F\to F'$ is a morphism in $\mathscr{A}$, then the linear map $F(\C^{\underline{a}})\to F'(\C^{\underline{a}})$ is $G_{\underline{a}}$-equivariant.

If $\underline{a},\underline{b}\in\N^n$ with $a_i\leq b_i$ for all $i$, there is a natural morphism $\iota:\C^{\underline{a}}\to\C^{\underline{b}}$ in $\Flag_n$ given by inclusion on each graded piece of the flags. Notice that this morphism splits, and so if $F\in\mathscr{A}$, then $F(\iota):F(\C^{\underline{a}})\to F(\C^{\underline{b}})$ is an injective $G_{\underline{a}}$-equivariant linear map, where $G_{\underline{a}}\subset G_{\underline{b}}$ is naturally a subgroup via $\iota$. Such maps form a directed system, and so one can consider the direct limit $\varinjlim F(\C^{\underline{a}})$. For any $j\gg 0$, there exists one of these natural injections $\C^{\underline{a}}\to\C^{(j^n)}$, where $(j^n)=(j,\ldots,j)$; therefore, one can equivalently consider the direct limit $\varinjlim F(\C^{(j^n)})$.

\begin{lemma}\label{lem:dirlim poly rep}
    If $F\in\mathscr{A}$, then $\varinjlim F(\C^{(j^n)})$ is a polynomial representation of $\bfG$.
\end{lemma}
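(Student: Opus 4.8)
The plan is to reduce to the representable functors $F_a$ and then exploit exactness of filtered colimits. Since $F$ is polynomial, there is a short exact sequence $0\to F''\to F'\to F\to 0$ of functors $\Flag_n\to\Vec$ (sub- and quotient functors being computed objectwise) with $F'$ a subfunctor of a finite direct sum $\bigoplus_{k=1}^m F_{a_k}$. Evaluation of functors is exact, and filtered colimits of vector spaces are exact, so evaluating at $\C^{(j^n)}$ and passing to the colimit over $j$ yields a short exact sequence
\[0\to\varinjlim F''(\C^{(j^n)})\to\varinjlim F'(\C^{(j^n)})\to\varinjlim F(\C^{(j^n)})\to 0\]
together with an inclusion $\varinjlim F'(\C^{(j^n)})\hookrightarrow\bigoplus_{k=1}^m\varinjlim F_{a_k}(\C^{(j^n)})$. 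It therefore suffices to equip all of these colimits with compatible $\bfG$-module structures making the displayed maps $\bfG$-equivariant, and to identify $\varinjlim F_a(\C^{(j^n)})$ with $\bfV^{\otimes a}$ as a $\bfG$-module; granting this, $\varinjlim F(\C^{(j^n)})$ is a $\bfG$-subquotient of the polynomial representation $\bigoplus_k\bfV^{\otimes a_k}$, and hence polynomial.

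For the $\bfG$-action: the natural inclusions $\iota\colon\C^{(j^n)}\hookrightarrow\C^{((j+1)^n)}$ realize $G_{(j^n)}$ as a subgroup of $G_{((j+1)^n)}$ (extend a flag automorphism by the identity on a complement, i.e.\ pad each block with the identity), and $\bfG=\bigcup_j G_{(j^n)}$, since any element of $\bfG$ has all of its blocks supported in a common finite corner and so lies in $G_{(j^n)}$ for $j\gg 0$. Because $F$ is a functor, each transition map $F(\iota)\colon F(\C^{(j^n)})\to F(\C^{((j+1)^n)})$ is $G_{(j^n)}$-equivariant, so the colimit acquires compatible actions of all the $G_{(j^n)}$, hence of $\bfG$; and since a natural transformation induces $G_{(j^n)}$-equivariant maps on values, all of the structure maps above are $\bfG$-equivariant.

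Finally, $F_a(\C^{(j^n)})=(\C^{nj})^{\otimes a}$ with transition maps $\iota^{\otimes a}$; since $-\otimes-$ commutes with filtered colimits in each variable and the diagonal $\N\to\N^a$ is cofinal, $\varinjlim_j(\C^{nj})^{\otimes a}=\bigl(\varinjlim_j\C^{nj}\bigr)^{\otimes a}$. The colimit $\varinjlim_j\C^{nj}$, formed along the natural inclusions, carries the flag whose $i$-th piece is $\bigcup_j\C^{ij}$; its graded pieces are infinite-dimensional, and the glued $G_{(j^n)}$-actions are the tautological $\bfG$-action, so this is precisely the standard representation $\bfV$. Hence $\varinjlim F_a(\C^{(j^n)})=\bfV^{\otimes a}$ with the diagonal $\bfG$-action, which is polynomial by definition. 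The ingredients here --- exactness of evaluation and of filtered colimits, commutation of $\otimes$ with colimits, cofinality of the diagonal --- are all standard; the only point that requires a little care is checking that $\bfG=\bigcup_j G_{(j^n)}$ and $\varinjlim_j\C^{nj}=\bfV$ so that the various $\bfG$-module structures match up, and I do not expect a real obstacle.
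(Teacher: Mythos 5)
Your proof is correct and follows essentially the same route as the paper: reduce to the functors $F_a$, identify $\varinjlim F_a(\C^{(j^n)})$ with $\bfV^{\otimes a}$, and conclude by exactness of direct limits. The extra details you supply (the $\bfG$-module structure on the colimit via $\bfG=\bigcup_j G_{(j^n)}$, and the cofinality/tensor-commutation argument) are points the paper treats as immediate from the setup in the preceding subsection, so there is no substantive difference.
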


\begin{proof}
    Recall that $F_a\in\mathscr{A}$ is the functor $\{V_i\}\mapsto V^{\otimes a}$. We have that $\varinjlim F_a(\C^{(j^n)})\cong\bfV^{\otimes a}$. The result then follows from direct limits being exact.
\end{proof}

\subsection{Equivalence between $\Rep(\bfG)$ and $\mathscr{A}$}

We now show that $\Rep(\bfG)\cong\mathscr{A}$. By Lemma~\ref{lem:dirlim poly rep}, there is a functor $\Phi:\mathscr{A}\to\Rep(\bfG)$ given by $F\mapsto\varinjlim F(\C^{(j^n)})$. Note that $\Phi$ is a tensor functor since it is defined using a direct limit.

We now define a functor $\Psi:\Rep(\bfG)\to\mathscr{A}$. Recall the subgroups $H_{\underline{a}}$ and $G_{\underline{a}}$ of $\bfG$ defined in \S\ref{subsec:G invar}.
For a polynomial $\bfG$-representation $V$, define $\Psi(V)(\C^{\underline{a}})=V^{H_{\underline{a}}}$ to be the $H_{\underline{a}}$-invariants of $V$. This is a representation of $G_{\underline{a}}$, since $G_{\underline{a}}$ commutes with $H_{\underline{a}}$. As with $\Phi$, this functor takes polynomial $\bfG$-representations to polynomial functors. Suppose $f:\C^{\underline{a}}\to\C^{\underline{b}}$ is a morphism in $\Flag_n$. There is an element $g_f$ in $\End(\bfV)$ extending $f$ such that $g_f$ restricts to $f$ on $\C^{\underline{a}}$ and such that for any $h\in H_{\underline{b}}$, there exists an $h'\in H_{\underline{a}}$ such that the action of $h\circ g_f$ on the standard representation $\bfV$ is equal to the action of $g_f\circ h'$. This shows that if $V$ is a polynomial representation of $\bfG$, the image of $V^{H_{\underline{a}}}$ from the induced action of $g_f$ on $V$ is contained in $V^{H_{\underline{b}}}$. Define the morphism $\Psi(V)(f)$ in $\Vec$ by the action of $g_f$ on $V$. By Lemma~\ref{lem:invar tensor}, $\Psi$ is a tensor functor.

\begin{lemma}
    $(\Phi,\Psi)$ is an adjoint pair and the counit $\Phi\Psi\to\id$ is an equality.
\end{lemma}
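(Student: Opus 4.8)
The plan is to exhibit a unit and counit for the adjunction and check the triangle identities, with the main work being the verification that the counit $\Phi\Psi \to \id$ is an isomorphism (in fact an equality under the natural identifications). First I would unwind both functors on a polynomial $\bfG$-representation $V$: by definition $\Psi(V)(\C^{\underline{a}}) = V^{H_{\underline{a}}}$, and then $\Phi(\Psi(V)) = \varinjlim_j V^{H_{(j^n)}}$, where the transition maps are induced by the elements $g_f \in \End(\bfV)$ extending the natural inclusions $\C^{(j^n)} \hookrightarrow \C^{((j+1)^n)}$. The key point is that $\bigcup_j H_{(j^n)}$-fixed vectors exhaust $V$: since $V$ is a subquotient of a finite sum of tensor powers $\bfV^{\otimes a}$ and every vector in such a tensor power involves only finitely many basis vectors of $\bfV$, every vector of $V$ is fixed by $H_{(j^n)}$ for $j \gg 0$. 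Thus the natural maps $V^{H_{(j^n)}} \to V$ assemble into an isomorphism $\varinjlim_j V^{H_{(j^n)}} \xrightarrow{\sim} V$, and one checks this identification is $\bfG$-equivariant because the $\bfG$-action on $\bfV^{\otimes a}$ (hence on $V$) is compatible with the directed system of $\End(\bfV)$-actions used to define the transition maps. This gives the counit $\varepsilon_V: \Phi\Psi(V) \to V$ as an equality.

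Next I would construct the unit $\eta_F: F \to \Psi\Phi(F)$ for $F \in \mathscr{A}$. Here $\Psi\Phi(F)(\C^{\underline{a}}) = \big(\varinjlim_j F(\C^{(j^n)})\big)^{H_{\underline{a}}}$. Since each natural inclusion $\C^{\underline{a}} \hookrightarrow \C^{(j^n)}$ splits in $\Flag_n$, the map $F(\C^{\underline{a}}) \to F(\C^{(j^n)})$ is a split injection, and its image lands in the $H_{\underline{a}}$-invariants of $F(\C^{(j^n)})$ because the subgroup $H_{\underline{a}} \subset G_{(j^n)}$ acts trivially on the image of the standard split embedding. Passing to the limit produces a natural map $\eta_F(\C^{\underline{a}}): F(\C^{\underline{a}}) \to \Psi\Phi(F)(\C^{\underline{a}})$, natural in both $F$ and $\C^{\underline{a}}$; I would verify naturality in $\C^{\underline{a}}$ against the morphisms $f$ of $\Flag_n$ using the compatibility of $g_f$ with the $H_{\underline{a}}$-actions that was established when $\Psi$ was defined.

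The triangle identities then reduce to routine bookkeeping: $\varepsilon_{\Phi(F)} \circ \Phi(\eta_F) = \id_{\Phi(F)}$ holds because $\Phi(\eta_F)$ is a map of direct limits whose composite with the canonical projection recovers the identity, and $\Psi(\varepsilon_V) \circ \eta_{\Psi(V)} = \id_{\Psi(V)}$ holds levelwise at each $\C^{\underline{a}}$ after applying $(-)^{H_{\underline{a}}}$ to the counit equality. The main obstacle I anticipate is the equivariance check for the counit: one must verify that the $\bfG$-action on $V$ obtained by transporting through the direct limit of the $\End(\bfV)$-actions on the $F(\C^{(j^n)})$ genuinely agrees with the original $\bfG$-action, which relies on the fact (established in the subsection on the action of the endomorphism monoid) that the $\End(\bfV)$-action on a polynomial representation is computed by the same polynomial entries as the $\bfG$-action and is functorial. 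Once this is in place, the adjunction together with the counit being an equality is immediate.
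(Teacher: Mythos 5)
Your proposal is correct and follows essentially the same route as the paper: both arguments rest on the two key facts that every vector of a polynomial representation $V$ is fixed by $H_{(j^n)}$ for $j\gg 0$ (giving $\Phi\Psi=\id$) and that the canonical map $F(\C^{\underline{a}})\to\Phi(F)$ lands in the $H_{\underline{a}}$-invariants. The paper packages the adjunction as a direct bijection of Hom-sets rather than via unit/counit and triangle identities, but the content is the same (and your justification of why $H_{(j^n)}$-invariants exhaust $V$ is slightly more explicit than the paper's).
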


\begin{proof}
    Let $F$ be an object of $\mathscr{A}$, and $V$ an object of $\Rep(\bfG)$. Suppose $f:\Phi(F)\to V$ is a $\bfG$-equivariant map. For any $\underline{a}$, there is a canonical $G_{\underline{a}}$-equivariant map $F(\C^{\underline{a}})\to \Phi(F)$, and composing with $f$ and then taking $H_{\underline{a}}$-invariants induces a $G_{\underline{a}}$-equivariant map $f_{\underline{a}}:F(\C^{\underline{a}})\to V^{H_{\underline{a}}}$. These maps are compatible with all maps induced by morphisms in $\Flag_n$ since $f$ is equivariant under the action of $\End(\bfV)$. We therefore obtain a morphism $F\to\Psi(V)$ in $\mathscr{A}$.

    Conversely, suppose $g:F\to\Psi(V)$ is a map in $\mathscr{A}$, and apply $\Phi$ to obtain $\Phi(g):\Phi(F)\to\Phi(\Psi(V))$. Note that $\Phi(\Psi(V))=V$, since every element of $V$ is invariant under $H_{(j^n)}$ for $j\gg 0$. This gives a map $\Phi(F)\to V$, and one can check that this construction is inverse to the one above. The claim that the counit is an equality also follows.
\end{proof}

\begin{proposition}
    The functors $\Phi,\Psi$ give an equivalence of tensor categories $\mathscr{A}\cong\Rep(\bfG)$.
\end{proposition}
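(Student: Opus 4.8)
The plan is to show that the adjoint pair $(\Phi,\Psi)$ from the previous lemma is an equivalence by verifying that the unit $\eta\colon \id_{\mathscr{A}}\to\Psi\Phi$ is also an isomorphism; since we already know the counit $\Phi\Psi\to\id_{\Rep(\bfG)}$ is an equality, this suffices and automatically upgrades to an equivalence of tensor categories because both $\Phi$ and $\Psi$ are tensor functors. So the task reduces to: for $F\in\mathscr{A}$ and every tuple $\underline{a}$, the natural map $F(\C^{\underline{a}})\to(\varinjlim_j F(\C^{(j^n)}))^{H_{\underline{a}}}$ is an isomorphism.

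First I would reduce to the generating objects $F_a$, using that $\mathscr{A}$ is generated under subquotients and finite direct sums by the $F_a$'s and that both $\Phi$ and $\Psi=(-)^{H_{\underline{a}}}$ composed with $\Phi$ are exact (direct limits are exact, and taking $H_{\underline{a}}$-invariants is exact on polynomial representations since these are semisimple as $\bfL$-representations, hence as $H_{\underline{a}}\cong\bfL$-representations, by the discussion preceding Lemma~\ref{lem:invar tensor}). Thus if $\eta_{F_a}$ is an isomorphism for all $a$, then $\eta_F$ is an isomorphism for all $F$: pass to subobjects, quotients, and direct sums, using that the unit is a natural transformation between exact functors agreeing on a generating class. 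For $F=F_a$ we have $\Phi(F_a)=\bfV^{\otimes a}$ by Lemma~\ref{lem:dirlim poly rep}, so the claim becomes $F_a(\C^{\underline{a}})=(\C^{\abs{\underline{a}}})^{\otimes a}\xrightarrow{\ \sim\ }(\bfV^{\otimes a})^{H_{\underline{a}}}$. This is a direct computation: $H_{\underline{a}}$ acts on $\bfV$ fixing the subspace spanned by the first $a_i$ basis vectors of each graded piece $\bfV_{(i)}$ and acting with no invariants on the complementary (infinite-dimensional) part of each graded piece, so $(\bfV)^{H_{\underline{a}}}=\C^{\abs{\underline{a}}}=\C^{\underline{a}}$ as the underlying space, and taking $H_{\underline{a}}$-invariants of a tensor power commutes with the tensor power here because $\bfV=\C^{\abs{\underline{a}}}\oplus W$ as an $H_{\underline{a}}$-representation with $W^{H_{\underline{a}}}=0$ and $W$ semisimple with all isotypic components nontrivial, forcing $(\bfV^{\otimes a})^{H_{\underline{a}}}=(\C^{\abs{\underline{a}}})^{\otimes a}$; the flag structure matches by inspection. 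Naturality of this identification in morphisms of $\Flag_n$ follows because the extension $g_f\in\End(\bfV)$ used to define $\Psi(V)(f)$ restricts to $f$ on $\C^{\underline{a}}$, so on $F_a$ it literally acts as $f^{\otimes a}$.

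I expect the main obstacle to be the bookkeeping in the reduction step: one must check that the unit $\eta$ restricted to subquotients and finite direct sums of the $F_a$'s is compatibly an isomorphism, which requires knowing that $\Psi\Phi$ is exact and that $\eta$ is a map of functors so that the five lemma applies on short exact sequences $0\to F'\to F\to F''\to 0$. The exactness of $\Psi\Phi=(\varinjlim_j(-)(\C^{(j^n)}))^{H_{(j^n)}}$ follows from exactness of $\Phi$ (direct limits) together with exactness of $H_{\underline{a}}$-invariants on polynomial representations, the latter being where semisimplicity over the Levi is used. Once exactness is in hand, the argument is formal. Finally, I would remark that the composite $\mathscr{A}\cong\Rep(\bfG)\cong\Mod_{\scrU}$ with the equivalence of the previous subsection, together with the $n=1$ specialization, completes the proof of Theorem~\ref{thm:main}.
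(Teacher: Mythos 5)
Your argument is correct in substance but takes a genuinely different route from the paper. The paper does not touch the unit directly: it shows $\Psi$ is fully faithful from the counit being an equality, shows $\Phi$ is faithful by observing that the split inclusions $\C^{\underline{a}}\to\C^{(j^n)}$ force $F(\C^{\underline{a}})\to\Phi(F)$ to be injective, and deduces fullness of $\Phi$ from $\Phi\Psi=\id$. You instead prove the unit $\eta\colon\id\to\Psi\Phi$ is an isomorphism by reducing to the generators $F_a$ and computing $(\bfV^{\otimes a})^{H_{\underline{a}}}=(\C^{\abs{\underline{a}}})^{\otimes a}$ via the decomposition $\bfV=\C^{\abs{\underline{a}}}\oplus W$ with $(W^{\otimes k})^{H_{\underline{a}}}=0$ for $k\geq 1$; that computation is correct, and exactness of $\Psi\Phi$ (direct limits plus semisimplicity over $H_{\underline{a}}\cong\bfL$) is also correctly justified. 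Your version is more computational but arguably more self-contained, since the paper's fullness step quietly presupposes essential surjectivity of $\Psi$; your approach buys an explicit identification $F(\C^{\underline{a}})\cong\Phi(F)^{H_{\underline{a}}}$ for all polynomial $F$, which the paper never states.

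One point in your reduction needs to be made explicit. For a short exact sequence $0\to F'\to F\to F''\to 0$ with $\eta_F$ an isomorphism, the five lemma only gives that $\eta_{F'}$ is a monomorphism and $\eta_{F''}$ is an epimorphism; to conclude both are isomorphisms you need the additional input that $\eta$ is a monomorphism on \emph{every} object of $\mathscr{A}$, not just on the generating class. This is exactly supplied by the paper's faithfulness observation: the morphisms $\iota_j\colon\C^{\underline{a}}\to\C^{(j^n)}$ split in $\Flag_n$, so $F(\C^{\underline{a}})\to\Phi(F)$ is injective for all $F$, and its image lies in the $H_{\underline{a}}$-invariants. With that inserted, your formal reduction closes and the argument is complete.
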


\begin{proof}
    It suffices to show that both $\Phi,\Psi$ are fully faithful. The isomorphism $\Phi\Psi\to\id$ shows that $\Psi$ is fully faithful.

    We claim $\Phi$ is faithful. Fix $\underline{a}$, and let $F\in\mathscr{A}$. Then for any $j\gg 0$, there is a natural morphism $\iota_j:\C^{\underline{a}}\to\C^{(j^n)}$ in $\Flag_n$ that splits, and so $F(\iota_j)$ is injective. Furthermore, the $\iota_j$ maps are equivariant with the inclusions $\C^{(j^n)}\to\C^{((j+1)^n)}$. We therefore have an injection $F(\C^{\underline{a}})\to\Phi(F)=\varinjlim F(\C^{(j^n)})$.
    Now, suppose $f:F\to G$ is a morphism in $\mathscr{A}$. We then obtain the following commutative square with vertical maps that are injections:
    \[
 \xymatrix@C+0.25em@R+0.25em{ 
 	 F(\C^{\underline{a}}) \ar^{f(\C^{\underline{a}})}[r] \ar[d] & G(\C^{\underline{a}}) \ar^{}[d]  \\
 	\Phi(F) \ar^{\Phi(f)}[r] & \Phi(G)
 }
 \]
Thus, if $\Phi(f)=0$, then $f=0$ and so $\Phi$ is faithful.

We now show $\Phi$ is full. Suppose $f:M\to N$ is a map in $\Rep(\bfG)$, and let $g=\Psi(f)$. Then $\Phi(g)=\Phi\Psi(f)=f$, since $\Phi\Psi=\id$. 
\end{proof}

\subsection{Consequences of the equivalence}

We now list some immediate consequences of the equivalence of categories.

\begin{proposition}
Consider the category $\mathscr{A}$ of polynomial functors $\Flag_n\to\Vec$. Let $\{V_i\}$ denote the object $V$ in $\Flag_n$ with flag $0=V_0\subset V_1\subset\cdots \subset V_n=V$.
    \begin{enumerate}
        \item The functor $F_{\underline{\lambda}}:\{V_i\}\mapsto\bigotimes_{i=1}^n \bfS_{\lambda^i}(V_i/V_{i-1})$ is simple, and such functors constitute a complete irredundant set of simple objects in $\mathscr{A}$.
        \item The injective envelope and projective cover of $F_{\underline{\lambda}}$ are
        \[\{V_i\}\mapsto \bigotimes_{i=1}^n\bfS_{\lambda^i}(V_n/V_{i-1}),\qquad\{V_i\}\mapsto\bigotimes_{i=1}^n\bfS_{\lambda^i}(V_i).\]
        \item Polynomial functors $\Flag_n\to \Vec$ are of finite length, and have finite injective and projective dimensions.
        \item $\mathscr{A}$ is self-dual.
        \end{enumerate}
\end{proposition}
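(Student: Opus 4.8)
The plan is to transport everything through the chain of tensor equivalences $\mathscr{A}\cong\Rep(\bfG)\cong\Mod_{\scrU}$ established in the preceding propositions, together with the self-duality of $\Mod_{\scrU}$ from Proposition~\ref{prop:ModFBn selfdual} and the explicit dictionary relating the objects $\P_{\underline\lambda},\bfI_{\underline\lambda},\bfM_{\underline\lambda}$ to $U_{\underline\lambda},T_{\underline\lambda},S_{\underline\lambda}$ and in turn to functors on $\Flag_n$. Concretely, under the composite equivalence $\mathcal{F}\circ\Psi^{-1}$ (or equivalently $\Phi\circ\mathcal{F}$), I expect the simple $\scrU$-module $\bfM_{\underline\lambda}$ to correspond to $S_{\tau(\underline\lambda)}$ on the $\Rep(\bfG)$ side, and then $S_{\underline\mu}$ on $\Rep(\bfG)$ to correspond on $\mathscr{A}$ to the functor $\{V_i\}\mapsto\bigotimes_i\bfS_{\mu^i}(V_i/V_{i-1})$ by taking $H_{\underline a}$-invariants of $S_{\underline\mu}=\bigotimes_i\bfS_{\mu^i}(\bfV_{(i)})$ and noting $(\bfV_{(i)})^{H_{\underline a}}=\C^{a_i}=V_i/V_{i-1}$. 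Chasing the reversal $\tau$ carefully identifies $F_{\underline\lambda}$ (possibly after reindexing $\underline\lambda\mapsto\tau(\underline\lambda)$, which only permutes the indexing set and so does not affect the assertion that the $F_{\underline\lambda}$ form a complete irredundant list).

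First I would make items (1)--(3) precise in this way. For (1): since equivalences of abelian categories preserve simplicity and send a complete irredundant set of simples to one, and since the simples of $\Mod_{\scrU}$ are exactly the $\bfM_{\underline\lambda}$, it remains only to compute the image of $\bfM_{\underline\lambda}$ in $\mathscr{A}$ and check it equals $F_{\underline\lambda}$; this is the invariants computation above applied to $S_{\tau(\underline\lambda)}$, using that $\Phi(S_{\underline\mu})=S_{\underline\mu}$ and $\Psi(S_{\underline\mu})(\C^{\underline a})=S_{\underline\mu}^{H_{\underline a}}$, together with $\bfV_{(i)}^{H_{\underline a}}=\C^{a_i}$ and the fact that Schur functors commute with taking these invariants (Lemma~\ref{lem:invar tensor}). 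For (2): equivalences preserve injective envelopes and projective covers, and we already know $\bfI_{\underline\lambda},\P_{\underline\lambda}$ are the injective hull and projective cover of $\bfM_{\underline\lambda}$ in $\Mod_{\scrU}$; transporting via the identifications $\mathcal{F}(\bfI_{\underline\lambda})=T_{\tau(\underline\lambda)}$, $\mathcal{F}(\P_{\underline\lambda})=U_{\tau(\underline\lambda)}$ and then $\Psi$ applied to $T_{\underline\mu}=\bigotimes_i\bfS_{\mu^i}(\bfV/\bfV_{i-1})$, $U_{\underline\mu}=\bigotimes_i\bfS_{\mu^i}(\bfV_i)$ gives $T_{\underline\mu}^{H_{\underline a}}=\bigotimes_i\bfS_{\mu^i}((\bfV/\bfV_{i-1})^{H_{\underline a}})=\bigotimes_i\bfS_{\mu^i}(\C^{a_i+\cdots+a_n})$, i.e.\ $\{V_i\}\mapsto\bigotimes_i\bfS_{\mu^i}(V_n/V_{i-1})$, and similarly $U_{\underline\mu}^{H_{\underline a}}$ yields $\{V_i\}\mapsto\bigotimes_i\bfS_{\mu^i}(V_i)$; again the reversal $\tau$ is absorbed into a reindexing of $\underline\lambda$. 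For (3): this is immediate from Corollary~\ref{cor:classifyig indecomp injs}(2) and the fact that $\mathscr{A}\cong\Mod_{\scrU}$, since having finite length and finite injective/projective dimension are categorical properties preserved under equivalence.

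For (4), self-duality of $\mathscr{A}$ follows directly from self-duality of $\Mod_{\scrU}$ (Proposition~\ref{prop:ModFBn selfdual}), again transported across $\mathscr{A}\cong\Mod_{\scrU}$; one may additionally remark that the composite of the equivalence $\mathscr{A}\cong\Mod_{\scrU}$ with the contravariant self-equivalence $M\mapsto(\tau_!M)^\vee$ of $\Mod_{\scrU}$ gives an explicit contravariant self-equivalence of $\mathscr{A}$, and since all the equivalences in sight are tensor equivalences this is moreover a duality of tensor categories.

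The main obstacle is bookkeeping rather than conceptual: I must track the two independent sources of index-reversal --- the $\tau$ built into $\mathcal{F},\mathcal{G}$ between $\Mod_{\scrU}$ and $\Rep(\bfG)$, and the order in which $\bfV_1\subset\cdots\subset\bfV_n$ versus the quotients $\bfV/\bfV_{i-1}$ appear --- and verify that the stated formulas for $F_{\underline\lambda}$ and its hull/cover come out with the graded piece $V_i/V_{i-1}$ (not $V_{n-i+1}/V_{n-i}$) attached to $\lambda^i$. The clean way to handle this is to check the computation on $\Flag_1$ (where it reduces to classical Schur--Weyl), then on the principal objects $U_{\underline a},T_{\underline a}$ directly via the explicit $H_{\underline a}$-invariants, and finally invoke naturality so that the general statement for $F_{\underline\lambda}$ follows by extracting $\bfM_{\underline\lambda}$-isotypic components, exactly as in the proof of the lemma identifying $\mathcal{F}$ on $\bfI_{\underline\lambda}$ and $\P_{\underline\lambda}$.
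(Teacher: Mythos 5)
Your argument is correct and is exactly the route the paper takes: the paper states this proposition without proof as an immediate consequence of the equivalences $\mathscr{A}\cong\Rep(\bfG)\cong\Mod_{\scrU}$, and your invariants computations ($\bfV_{(i)}^{H_{\underline{a}}}=\C^{a_i}$, $(\bfV/\bfV_{i-1})^{H_{\underline{a}}}=V_n/V_{i-1}$, $\bfV_i^{H_{\underline{a}}}=V_i$, with Schur functors commuting with invariants since $\Psi$ is a tensor functor) correctly supply the dictionary, with the $\tau$-reversal absorbed into a reindexing of $\underline{\lambda}$ as you say. The only quibble is notational: the composite equivalence $\Mod_{\scrU}\to\mathscr{A}$ should be written $\Psi\circ\mathcal{F}$ (with inverse $\mathcal{G}\circ\Phi$) rather than $\mathcal{F}\circ\Psi^{-1}$ or $\Phi\circ\mathcal{F}$, whose arrows do not compose.
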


\begin{proposition}
Consider the category $\Rep(\bfG)$ of polynomial representations of $\bfG$.
\begin{enumerate}
    \item The $\bfG$-modules $T_{\underline{a}}$ and $T_{\underline{\lambda}}$ are injective.
    \item $T_{\underline{\lambda}}$ is the injective envelope of $S_{\underline{\lambda}}$, and the $T_{\underline{\lambda}}$'s constitute a complete irredundant set of indecomposable injective objects in $\Rep(\bfG)$.
    \item The $\bfG$-modules $U_{\underline{a}}$ and $U_{\underline{\lambda}}$ are projective.
    \item $U_{\underline{\lambda}}$ is the projective cover of $S_{\underline{\lambda}}$, and the $U_{\underline{\lambda}}$'s constitute a complete irredundant set of indecomposable projective objects in $\Rep(\bfG)$.
    \item All objects are of finite length, and have finite injective and projective dimensions.
    \item Tensor powers of the standard representation $\bfV^{\otimes a}$ are both projective and injective.
    \item The category $\Rep(\bfG)$ is self-dual.
\end{enumerate}
\end{proposition}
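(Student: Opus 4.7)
The plan is to transfer every claim from the $\scrU$-module side along the tensor equivalence $\mathcal{F}:\Mod_\scrU\to\Rep(\bfG)$ of \S\ref{subsec:eq rep(G) Mod_U}, using the explicit identifications
\[\mathcal{F}(\bfI_{\underline{a}})=T_{\tau(\underline{a})},\quad \mathcal{F}(\bfI_{\underline{\lambda}})=T_{\tau(\underline{\lambda})},\quad \mathcal{F}(\P_{\underline{a}})=U_{\tau(\underline{a})},\quad \mathcal{F}(\P_{\underline{\lambda}})=U_{\tau(\underline{\lambda})},\]
together with $\mathcal{F}(\bfM_{\underline{\lambda}})=S_{\tau(\underline{\lambda})}$ (which follows from $\mathcal{F}$ restricting to a bijection on simples and the identification of socles).

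For items (1)--(4) I would invoke the fact that any equivalence of abelian categories preserves injectivity, projectivity, indecomposability, socles, and projective covers. Proposition~\ref{prop:fb indecomposable injs} and its projective analog, combined with Corollary~\ref{cor:classifyig indecomp injs}(1), then transfer verbatim to give every assertion about $T_{\underline{a}},T_{\underline{\lambda}},U_{\underline{a}},U_{\underline{\lambda}}$. For item (5), finite injective and projective dimensions transfer from Corollary~\ref{cor:classifyig indecomp injs}(2), and finite length is essentially built into the definition of a $\scrU$-module: the support condition together with finite-dimensional values forces a finite composition series (every morphism in $\scrU$ is an iso, so submodules are pointwise subrepresentations of the finite-dimensional $\Aut(S)$-modules $M(S)$).

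Item (6) follows from the observation that $\bfV$ equals both $\bfV_n$ and $\bfV/\bfV_0$, hence
\[\bfV^{\otimes a}=U_{(0,\ldots,0,a)}=T_{(a,0,\ldots,0)},\]
and items (1) and (3) then yield projectivity and injectivity simultaneously (one can alternatively check this on the $\scrU$-module side: $\mathcal{F}(\P_{e_1}^{\otimes a})=\bfV^{\otimes a}$ by Lemma~\ref{lem:tensor prin proj} and $\mathcal{F}(\P_{e_1})=\bfV_n=\bfV$). Item (7) is immediate from Proposition~\ref{prop:ModFBn selfdual} transported along $\mathcal{F}$.

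Since every assertion is a dictionary translation of a result already established for $\Mod_\scrU$, I do not anticipate a serious obstacle; the only verification needed is that the identifications of principal objects are compatible with the relevant categorical properties, and this is automatic because $\mathcal{F}$ is a tensor equivalence.
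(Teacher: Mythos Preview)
Your approach is correct and matches the paper's: the proposition is stated under ``Consequences of the equivalence'' with no explicit proof, precisely because every item is meant to be transported from $\Mod_\scrU$ along $\mathcal{F}$ exactly as you describe. One small slip: your parenthetical claim that ``every morphism in $\scrU$ is an iso'' is false (a bijection that strictly raises a weight is not invertible in $\scrU$), but the finite length argument does not need this---it follows from the total dimension $\sum_{[S]}\dim M(S)$ being finite, so any strictly increasing chain of submodules terminates.
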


\subsection{Extensions of simples}

In this subsection, we compute the first Ext groups between simples objects.

\begin{lemma}\label{lem:ext nonzero necc}
    Let $\underline{\lambda},\underline{\mu}$ be tuples of partitions with $\underline{a}=(\abs{\mu^i})$ and $\underline{b}=(\abs{\lambda^i})$.
    If $\Ext^1_\bfG(S_{\underline{\lambda}},S_{\underline{\mu}})\neq 0$, then it must be that $\abs{\underline{a}}=\abs{\underline{b}}$ and $\underline{a}>\underline{b}$ is a cover relation.
\end{lemma}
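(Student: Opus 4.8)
The plan is to reduce the computation of $\Ext^1_\bfG(S_{\underline{\lambda}},S_{\underline{\mu}})$ to a statement about short exact sequences of polynomial $\bfG$-representations and to extract the two necessary conditions from the weight decomposition under the Levi $\bfL$. First I would fix a nonsplit extension
\[
0\to S_{\underline{\mu}}\to E\to S_{\underline{\lambda}}\to 0
\]
in $\Rep(\bfG)$ and analyze it. Since $E$ is a polynomial $\bfG$-representation, it is in particular a polynomial $\bfL$-representation, hence semisimple as an $\bfL$-module, and its $\bfL$-weights are the multisets obtained from those of $S_{\underline{\lambda}}$ and $S_{\underline{\mu}}$. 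If $\abs{\underline{a}}\neq\abs{\underline{b}}$, then $S_{\underline{\lambda}}$ and $S_{\underline{\mu}}$ have disjoint total degrees (the sum of the weight is $\abs{\underline b}$ resp.\ $\abs{\underline a}$), so $E$ splits off the two graded pieces by degree, contradicting nonsplitness. This gives $\abs{\underline{a}}=\abs{\underline{b}}$.

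Next I would show $\underline{a}>\underline{b}$ (strictly, in the dominance order). Here the key input is Corollary~\ref{cor:ss Gmod}: $E$ is not semisimple (the extension is nonsplit), so $\bfU=\bfG/\bfL$ acts nontrivially on $E$. Pick $u$ in the unipotent radical acting nontrivially; since $u$ acts trivially on both $S_{\underline{\lambda}}$ and $S_{\underline{\mu}}$, the operator $u-1$ on $E$ factors as a nonzero $\bfL$-map $S_{\underline{\lambda}}\to S_{\underline{\mu}}$ composed with the appropriate inclusions, and moreover it shifts $\bfL$-weights by a root of $\bfU$. The roots appearing in $\bfU$ are exactly those that, on the level of $\bfL$-weights, move "mass" from a lower graded piece $\bfV_{(j)}$ to a higher one $\bfV_{(i)}$ with $i<j$ — equivalently, from partition $\lambda^j$ toward partition $\mu^i$ — which at the level of the tuples $(\abs{\lambda^1},\dots)$ and $(\abs{\mu^1},\dots)$ is precisely the statement that $\underline{a}$ dominates $\underline{b}$. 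Nontriviality of $u-1$ forces this shift to be genuinely nonzero, giving $\underline{a}\neq\underline{b}$, hence $\underline{a}>\underline{b}$.

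Finally, to upgrade "$\underline a > \underline b$" to "$\underline a > \underline b$ is a cover relation," I would argue that a single root of $\bfU$ changes the tuple $\underline b$ to $\underline a$ by moving exactly one unit of weight from position $j$ to position $i<j$; but moving a single unit between two coordinates, keeping $\abs{\underline a}=\abs{\underline b}$, corresponds to an elementary step in the dominance order, i.e.\ a cover. Concretely: since $u-1$ is nilpotent and $\bfU$ is generated by its root subgroups, if $\underline a$ were not a cover of $\underline b$ one could find an intermediate $\underline c$ with $\underline a>\underline c>\underline b$ through which any weight-shift of a root of $\bfU$ on $E$ would have to factor — but the image of $u - 1$ lands in a single weight-isotypic piece matching $S_{\underline\mu}$ exactly, so only the cover case is possible. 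Alternatively, one can package this using the filtration of $T_{\underline a}$ with graded pieces controlled by $\scrU$: by Corollary~\ref{cor:G socle Ta} and Lemma~\ref{lem:homG Ta}, $T_{\underline a}/\mathrm{soc}$ injects into $\bigoplus_{\underline a\to\underline b\text{ cover}} T_{\underline b}$, so $\Ext^1(S_{\underline\lambda}, -)$ computed against the injective resolution forces $\underline b$ to be an immediate cover of $\underline a$.

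\textbf{Main obstacle.} The delicate point is the last step: ruling out non-cover relations. Showing $\underline a > \underline b$ is essentially a clean root-combinatorics argument, but pinning down that the extension "uses" only a single covering step requires either a careful analysis of which $\bfU$-roots can act on a length-two module with those particular socle and cosocle, or setting up the first terms of the injective resolution of $S_{\underline\lambda}$ (via the $T_{\underline\lambda}$'s and the exact sequence~(\ref{eqn:ses inj})) and reading off $\Ext^1$ from it. I expect the cleanest writeup to go through the injective resolution: $0\to S_{\underline\lambda}\to T_{\underline\lambda}\to (\text{next term})$, where the next term is a sum of $T_{\underline\mu}$'s with $\underline b \to (\abs{\mu^i})$ a cover, so that $\Hom_\bfG(S_{\underline\mu},-)$ applied to this complex immediately yields the cover condition.
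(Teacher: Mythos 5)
Your ``alternative'' packaging at the end is essentially the paper's proof: the paper takes the $\bfM_{\underline{\mu}}$-isotypic piece of the exact sequence (\ref{eqn:ses inj}) for $T_{\underline{a}}$ to get $0\to S_{\underline{\mu}}\to T_{\underline{\mu}}\to T$ with $T$ a summand of $\bigoplus T_{\underline{c}}$ over cover relations $\underline{a}>\underline{c}$, and then applies $\Hom_\bfG(S_{\underline{\lambda}},-)$, which is nonzero on $T_{\underline{c}}$ only for $\underline{c}=\underline{b}$ by Corollary~\ref{cor:G socle Ta}. Note one direction slip in your closing sentence: you must resolve $S_{\underline{\mu}}$ (the second argument of $\Ext^1$) injectively and apply $\Hom_\bfG(S_{\underline{\lambda}},-)$; resolving $S_{\underline{\lambda}}$ and applying $\Hom_\bfG(S_{\underline{\mu}},-)$ computes $\Ext^1_\bfG(S_{\underline{\mu}},S_{\underline{\lambda}})$ instead.

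Your primary route, analyzing a nonsplit extension $0\to S_{\underline{\mu}}\to E\to S_{\underline{\lambda}}\to 0$ directly via the $\bfL$-grading and the $\bfU$-action, is a genuinely different and more hands-on argument; the degree and dominance steps are fine. But the cover step as you wrote it is not a proof: a root subgroup in a block $(i,j)$ with $j>i+1$ shifts the multidegree by $e_i-e_j$ in a single application, so there is no a priori reason its action ``factors through an intermediate $\underline{c}$.'' The missing ingredient is that on a length-two module whose sub and quotient are both $\bfU$-trivial, one has $(u-1)(v-1)=0$ for all $u,v\in\bfU$, so the action of $\fraku=\mathrm{Lie}(\bfU)$ factors through the abelianization $\fraku/[\fraku,\fraku]\cong\bigoplus_{i}\Hom(\bfV_{(i+1)},\bfV_{(i)})$; each summand shifts the multidegree by exactly $e_i-e_{i+1}$, which for fixed total degree is precisely a cover in dominance order, and if all these act trivially then $E$ is semisimple by Corollary~\ref{cor:ss Gmod}. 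With that commutator observation inserted, your first argument closes; without it, the step ``only the cover case is possible'' is unsupported.
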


\begin{proof}
    Recall from (\ref{eqn:ses inj}) that we have the exact sequence
\[0\to K_{\underline{a}}\to T_{\underline{a}}\to\bigoplus_{f:\underline{a}\to\underline{c}}T_{\underline{c}},\]
where $\abs{\underline{a}}=\abs{\underline{c}}$ with $\underline{a}>\underline{c}$ a cover relation. Then, take the $\bfM_{\underline{\mu}}$-isotypic piece of this sequence to obtain the exact sequence $0\to S_{\underline{\mu}}\to T_{\underline{\mu}}\to T$, where $T$ is a summand of $\bigoplus T_{\underline{c}}$. In particular, $T$ is injective and if $T_{\underline{\lambda}}$ is a summand of $T$, then it must be that $\abs{\underline{a}}=\abs{\underline{b}}$ and $\underline{a}>\underline{b}$ is a cover relation. 
\end{proof}

We now give the formula for the first Ext groups by examining projective $\scrD$-modules. For partitions $\lambda,\mu$, we say that $\lambda/\mu\in\text{HS}_1$ if $\mu\subset\lambda$ as Young diagrams and their difference is a single box.

\begin{proposition}
 Let $\underline{\lambda},\underline{\mu}$ be tuples of partitions with $\underline{a}=(\abs{\mu^i})$ and $\underline{b}=(\abs{\lambda^i})$.
    We have $\Ext^1_\bfG(S_{\underline{\lambda}},S_{\underline{\mu}})\neq 0$ if and only if 
    \begin{enumerate}
        \item $\abs{\underline{a}}=\abs{\underline{b}}$ and $\underline{a}>\underline{b}$ is a cover relation where $a_i-1=b_i$,
        \item $\mu^j=\lambda^j$ for $j\neq i,i+1$, and
        \item $\mu^i/\lambda^i$ and $\lambda^{i+1}/\mu^{i+1}$ are both in $\text{HS}_1$.
    \end{enumerate}
    In this case, its dimension is $1$.
\end{proposition}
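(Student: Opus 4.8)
The plan is to transport the computation into the category $\Mod_\scrD$, where explicit projective objects are available. Under the covariant equivalence $\Mod_\scrD\cong\Rep(\bfG)$ of \S\ref{subsec:eq rep(G) Mod_U}, the simple module $\bfM_{\underline\lambda}$ corresponds to $S_{\underline\lambda}$ (both are characterized as the socle of the relevant principal object) and $\bfQ_{\underline\lambda}$ corresponds to the projective cover $U_{\underline\lambda}$, so it suffices to compute $\Ext^1_{\Mod_\scrD}(\bfM_{\underline\lambda},\bfM_{\underline\mu})$. Write $\underline b=(\abs{\lambda^i})$ and $\underline a=(\abs{\mu^i})$, so that $\bfM_{\underline\lambda}$ is supported at $\underline b$ and $\bfM_{\underline\mu}$ at $\underline a$; we may assume $\underline\lambda\neq\underline\mu$, since otherwise both sides of the asserted equivalence are zero (the $\Ext$ group by Lemma~\ref{lem:ext nonzero necc}, and condition (1) because it forces $\underline a>\underline b$). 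Applying $\Hom_\scrD(-,\bfM_{\underline\mu})$ to the exact sequence $0\to\operatorname{rad}\bfQ_{\underline\lambda}\to\bfQ_{\underline\lambda}\to\bfM_{\underline\lambda}\to 0$, in which $\operatorname{rad}\bfQ_{\underline\lambda}=\ker(\bfQ_{\underline\lambda}\twoheadrightarrow\bfM_{\underline\lambda})$ is supported strictly above $\underline b$ in the dominance order, and using $\Hom_\scrD(\bfQ_{\underline\lambda},N)=\Hom_{\frakS_{\underline b}}(\bfM_{\underline\lambda},N(\underline b))$ (the $\scrD$-analog of the identities in \S\ref{sec:rep FBn}), one gets $\Ext^1_{\Mod_\scrD}(\bfM_{\underline\lambda},\bfM_{\underline\mu})=\Hom_\scrD(\operatorname{rad}\bfQ_{\underline\lambda},\bfM_{\underline\mu})$.

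By Lemma~\ref{lem:ext nonzero necc} this vanishes unless $\abs{\underline a}=\abs{\underline b}$ and $\underline a>\underline b$ is a cover relation; spelled out for compositions, this means there is a single index $i\in[n-1]$ with $a_i=b_i+1$, $a_{i+1}=b_{i+1}-1$, and $a_j=b_j$ for $j\neq i,i+1$, which is exactly condition (1). Assume this. Since $\operatorname{rad}\bfQ_{\underline\lambda}$ is supported strictly above $\underline b$ and $\underline a$ covers $\underline b$, no object in the support of $\operatorname{rad}\bfQ_{\underline\lambda}$ admits a morphism into $\underline a$ in $\scrD$ other than $\underline a$ itself, so a natural transformation $\operatorname{rad}\bfQ_{\underline\lambda}\to\bfM_{\underline\mu}$ is the same datum as an $\frakS_{\underline a}$-equivariant linear map $(\operatorname{rad}\bfQ_{\underline\lambda})(\underline a)\to\bfM_{\underline\mu}$, and $(\operatorname{rad}\bfQ_{\underline\lambda})(\underline a)=\bfQ_{\underline\lambda}(\underline a)$ as $\underline a\neq\underline b$. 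Thus the task reduces to decomposing the $\frakS_{\underline a}$-representation
\[\bfQ_{\underline\lambda}(\underline a)=\Hom_{\frakS_{\underline b}}\big(\bfM_{\underline\lambda},\,\C[\Hom_\scrD(\underline b,\underline a)]\big)\]
and reading off the multiplicity of $\bfM_{\underline\mu}=M_{\mu^1}\boxtimes\cdots\boxtimes M_{\mu^n}$.

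The combinatorial heart is the structure of $\Hom_\scrD(\underline b,\underline a)$ for such a cover. A partial-sum count shows that every downward bijection $\underline b\to\underline a$ is ``stable'' on all weights other than $i$ and $i+1$, and on those two weights it carries all of $[b_i]$ into $[a_i]$ and exactly one element $w$ of $[b_{i+1}]$ into the remaining slot of $[a_i]$, the other $b_{i+1}-1=a_{i+1}$ elements bijecting onto $[a_{i+1}]$. Hence, as an $\frakS_{\underline a}\times\frakS_{\underline b}$-biset, $\Hom_\scrD(\underline b,\underline a)$ is a product of regular bisets on the stable weights together with a piece on weights $i,i+1$ induced from the stabilizer of $w$; feeding this into $\Hom_{\frakS_{\underline b}}(\bfM_{\underline\lambda},-)$, applying Frobenius reciprocity for that induction together with the branching rule $M_\nu|_{\frakS_{m-1}}=\bigoplus_{\nu/\nu'\in\text{HS}_1}M_{\nu'}$ (for the passages $\frakS_{a_i}\leftrightarrow\frakS_{b_i}$ and $\frakS_{b_{i+1}}\leftrightarrow\frakS_{b_{i+1}-1}=\frakS_{a_{i+1}}$) and the self-duality of Specht modules over $\C$, one obtains
\[\bfQ_{\underline\lambda}(\underline a)\ \cong\ \Big(\bigotimes_{j<i}M_{\lambda^j}\Big)\otimes\Big(\bigoplus_{\nu:\ \nu/\lambda^i\in\text{HS}_1}M_\nu\Big)\otimes\Big(\bigoplus_{\rho:\ \lambda^{i+1}/\rho\in\text{HS}_1}M_\rho\Big)\otimes\Big(\bigotimes_{j>i+1}M_{\lambda^j}\Big)\]
as $\frakS_{\underline a}$-representations. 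Taking the multiplicity of $\bfM_{\underline\mu}$ in this direct sum gives exactly conditions (2) and (3), and this multiplicity is always $0$ or $1$; combined with the previous paragraphs, this proves the ``if and only if'' together with the final dimension statement.

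I expect the main obstacle to be precisely the bookkeeping in the last step: correctly tracking which symmetric group acts on which tensor factor of $\C[\Hom_\scrD(\underline b,\underline a)]$ — in particular the point-stabilizer subgroups $\frakS_{b_i}\hookrightarrow\frakS_{a_i}$ and $\frakS_{b_{i+1}-1}\hookrightarrow\frakS_{b_{i+1}}$ coming from the chosen element $w$ — and applying Frobenius reciprocity on the correct side. Once the biset is written in induced form, the rest is a routine application of the branching rule.
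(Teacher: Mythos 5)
Your proposal is correct and follows essentially the same route as the paper: both pass to $\Mod_\scrD$, reduce the computation to the projective cover $\bfQ_{\underline{\lambda}}$, and identify $\bfQ_{\underline{\lambda}}(\underline{a})$ as a restriction-then-induction of $\bfM_{\underline{\lambda}}$ whose decomposition via the branching/Pieri rules yields conditions (2) and (3) with multiplicity one. The only (cosmetic) difference is that you extract $\Ext^1$ as $\Hom_\scrD(\operatorname{rad}\bfQ_{\underline{\lambda}},\bfM_{\underline{\mu}})$ using the support/cover-relation argument, whereas the paper reads it off as the multiplicity of $\bfQ_{\underline{\mu}}$ in the first step of the minimal projective resolution; these agree for exactly the reason your support argument supplies.
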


\begin{proof}
   First suppose $\underline{\lambda},\underline{\mu}$ satisfy the three conditions. Consider the simple $\scrD$-module $\bfM_{\underline{\lambda}}$, which has projective cover $\bfQ_{\underline{\lambda}}$. Then, if $Q_\bullet\to \bfM_{\underline{\lambda}}\to 0$ is the projective resolution of $\bfM_{\underline{\lambda}}$, we have that $Q_0=\bfQ_{\underline{\lambda}}$. We now determine the multiplicity of $\bfQ_{\underline{\mu}}$ in $Q_1$.
    
    We have that the kernel of $\bfQ_{\underline{\lambda}}\twoheadrightarrow\bfM_{\underline{\lambda}}$ evaluated on the weighted set $\underline{a}$ is given by first restricting $\bfM_{\underline{\lambda}}$ as a representation of $\frakS_{\underline{b}}$ to a representation of the subgroup
    \[\frakS_{b_1}\times\cdots\times\frakS_{b_i}\times\frakS_1\times\frakS_{b_{i+1}-1}\times\frakS_{b_{i+2}}\times\cdots\times\frakS_{b_n},\]
    via $\frakS_{b_{i+1}-1}\times\frakS_1\subset\frakS_{b_{i+1}}$,
    and then inducting to $\frakS_{\underline{a}}$ via $\frakS_{b_{i}}\times\frakS_1\subset\frakS_{a_{i}}$. By Pieri's rule, $\bfM_{\underline{\mu}}$ appears with multiplicity one. Since $\underline{a}>\underline{b}$ is a cover relation, the projective cover of $\bfM_{\underline{\mu}}$ must appear in $Q_1$, and it appears with multiplicity one. Therefore, $\Ext^1_\scrD(\bfM_{\underline{\lambda}},\bfM_{\underline{\mu}})=\Ext^1_\bfG(S_{\underline{\lambda}},S_{\underline{\mu}})$
    has dimension $1$.

    By Lemma~\ref{lem:ext nonzero necc}, if condition (1) is not satisfied, then $\Ext^1_\bfG(S_{\underline{\lambda}},S_{\underline{\mu}})=0$. Now, if condition (1) holds but conditions (2) and (3) are not satisfied, then $\bfM_{\underline{\mu}}$ does not appear in the decomposition of $\bfQ_{\underline{\lambda}}(\underline{a})$, and so there is no map from $\bfQ_{\underline{\mu}}\to\bfQ_{\underline{\lambda}}$. Therefore, $\Ext^1_\bfG(S_{\underline{\lambda}},S_{\underline{\mu}})=0$ in this case as well.
\end{proof}

An interesting problem is to construct the minimal projective or injective resolution for simple objects in our category, and to then compute all higher Ext groups among simple objects. We expect any formulas for such groups will involve Littlewood--Richardson coefficients.

\subsection{Universal property}
Using arguments similar to those in \cite[\S 3.4]{SSstabpatterns}, we now describe left-exact tensor functors from the category $\Mod_\scrD$ to an arbitrary abelian tensor category $\mathscr{C}$. By pre-composing with the pushforward $\tau_!:\Mod_\scrU\to\Mod_\scrD$, this also describes left-exact tensor functors from $\Mod_\scrU$. The model for these functors is the one $\Mod_\scrD\to\Rep(\bfG)$ described in \S\ref{subsec:eq rep(G) Mod_U} given by the structured tensor product with the functor $\mathscr{T}$.

Let $T(\mathscr{C})$ be the category whose objects consist of an object $A$ of $\mathscr{C}$ along with a sequence of morphisms
\[A=A_1\to A_2\to\cdots\to A_n.\]
We denote this object by $\{A_i\}$. Given $\{A_i\}\in T(\mathscr{C})$, define $\mathcal{K}(\{A_i\})$ to be the functor $\scrU\to\mathscr{C}$ given by
\[\underline{a}\mapsto\bigotimes_{i=1}^nA_i^{\otimes a_i}.\]
A morphism $\varphi:\underline{a}\to\underline{b}$ in $\scrU$ defines a morphism in $\mathscr{C}$ given by mapping the tensor factors according to $\varphi$ and using the morphisms specified by $\{A_i\}$.
One should view this as a generalization of the functor $\mathscr{T}:\scrU\to\Rep(\bfG)$ giving injective objects in $\Rep(\bfG)$, where the sequence of morphisms are the surjections
\[\bfV\to\bfV/\bfV_1\to\cdots\to\bfV/\bfV_{n-1}.\]

\begin{proposition}
    To give a left-exact tensor functor $\Mod_\scrD\to\mathscr{C}$ is equivalent to giving an object of $T(\mathscr{C})$.
\end{proposition}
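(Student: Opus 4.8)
The plan is to set up a pair of mutually inverse assignments between left-exact tensor functors $\Phi\colon\Mod_\scrD\to\mathscr{C}$ and objects $\{A_i\}$ of $T(\mathscr{C})$, modeled exactly on the case $\mathscr{C}=\Rep(\bfG)$ treated in \S\ref{subsec:eq rep(G) Mod_U}. In one direction, given a left-exact tensor functor $\Phi$, I would produce an object of $T(\mathscr{C})$ as follows. Recall $\bfQ_{\tau(e_i)}=\bfI_{e_i}^\vee$ is the principal projective $\scrD$-module corresponding to the weighted set with one element of weight $n-i+1$; set $A_i=\Phi(\bfQ_{\tau(e_i)})$. The non-isomorphisms $\bfI_{e_{i}}\to\bfI_{e_{i+1}}$ in $\Mod_\scrU$ (equivalently the maps $\bfQ_{\tau(e_{i+1})}\to\bfQ_{\tau(e_i)}$ in $\Mod_\scrD$, dual to the weight-raising bijection) induce a sequence $A_1\to A_2\to\cdots\to A_n$, giving an object of $T(\mathscr{C})$. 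This mirrors the fact that in the model functor $M\mapsto M\odot\mathscr{T}$ one has $\bfQ_{\tau(e_i)}\odot\mathscr{T}\cong\bfV_i$ and the sequence is $\bfV_1\hookrightarrow\cdots\hookrightarrow\bfV_n$ — note the surjections $\bfV\to\bfV/\bfV_{i-1}$ mentioned in the statement are the $\scrU$-side picture, and dualizing gives the $\scrD$-side inclusions.

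In the other direction, given $\{A_i\}\in T(\mathscr{C})$, form the functor $\mathcal{K}=\mathcal{K}(\{A_i\})\colon\scrU\to\mathscr{C}$ as defined just above the statement, and set $\Phi(M)=M\odot\mathcal{K}=\Hom(M^\vee,\mathcal{K})$ for $M\in\Mod_\scrD$. By the discussion at the end of \S\ref{subsec:structured hom} (citing \cite[Proposition 2.1.16]{SSstabpatterns}), $M\mapsto M\odot\mathcal{K}$ is a tensor functor for any choice of $\mathcal{K}$, and it is left-exact because the structured tensor product is defined by a finite inverse limit (again as in \S\ref{subsec:structured hom}), hence commutes with finite limits and in particular with kernels. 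So $\Phi$ is indeed a left-exact tensor functor. The two main verifications are then: (i) starting from $\{A_i\}$, building $\mathcal{K}$, forming $\Phi$, and extracting the new $T(\mathscr{C})$-object recovers $\{A_i\}$ — this reduces, via Lemma~\ref{lem:structured hom prin proj} (so $\bfQ_{\underline{a}}\odot\mathcal{K}=\Hom(\P_{\tau(\underline{a})}^{\phantom\vee},\mathcal{K})$, wait: $\bfQ_{\underline{a}}^\vee=\bfI_{\tau(\underline{a})}$... I would instead use $\bfQ_{\underline{a}}=\tau_!\P_{\tau(\underline{a})}$ so that $\bfQ_{\underline{a}}\odot\mathcal{K}=\Hom((\tau_!\P_{\tau(\underline a)})^\vee,\mathcal K)$, identified with $\mathcal{K}(\underline a)=\bigotimes A_i^{\otimes a_i}$), so that $\bfQ_{\tau(e_i)}\odot\mathcal{K}=A_i$ and the connecting maps match by naturality; and (ii) starting from a left-exact tensor functor $\Phi$, showing $\Phi\cong(-)\odot\mathcal{K}(\{A_i\})$ where $\{A_i\}$ is extracted as above. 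For (ii) I would use that every $\scrD$-module has a finite presentation by principal projectives $\bigoplus\bfQ_{\underline{b}}\to\bigoplus\bfQ_{\underline{a}}\to M\to 0$ (Corollary~\ref{cor:classifyig indecomp injs} on the $\scrD$ side); left-exactness of $\Phi$ lets us instead use the dual statement — $M$ embeds into a two-term complex of principal injectives — or directly: a left-exact tensor functor is determined by its values on principal projectives together with a coreflexive equalizer presentation. Since $\bfQ_{\underline{a}}=\bigamalg_i\bfQ_{\tau(e_i)}^{\amalg a_i}$ and $\Phi$ is monoidal, $\Phi(\bfQ_{\underline{a}})=\bigotimes_i A_i^{\otimes a_i}=\mathcal{K}(\underline a)$ canonically, and naturality in $\underline a$ pins down the comparison on all of $\Mod_\scrD$ by the finite-limit presentation.

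The main obstacle I anticipate is step (ii): showing that an \emph{arbitrary} left-exact tensor functor is reconstructed from its restriction to principal projectives via the structured-tensor-product formula, i.e. that the natural comparison map $\Phi(M)\to M\odot\mathcal{K}(\{A_i\})$ built from the isomorphisms on $\bfQ_{\underline a}$'s is an isomorphism for all $M$. The clean way is to invoke the relevant abstract result of Sam–Snowden: by \cite[Corollary 2.1.12]{SSstabpatterns} (used already in \S\ref{subsec:eq rep(G) Mod_U}) or the companion statement on universal properties of structured Hom/tensor, a left-exact tensor functor out of $\Mod_\scrD$ corresponds to a functor $\scrU\to\mathscr C$ of the form $\mathcal K$ subject to checking that it sends the generating bijections compatibly — and the data of such a $\mathcal K$ that is "multiplicative" (i.e.\ $\mathcal K(\underline a)=\bigotimes\mathcal K(e_i)^{\otimes a_i}$, forced by monoidality) is exactly the data of the objects $\mathcal K(e_1),\dots,\mathcal K(e_n)$ together with the morphisms $\mathcal K(e_i)\to\mathcal K(e_{i+1})$ induced by the weight-raising bijections, which is precisely an object of $T(\mathscr C)$. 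Assembling these citations, the proposition follows; the only genuinely new bookkeeping is matching the $\tau$-twist between the $\scrU$-side functor $\mathscr{T}$ (with surjections) and the $\scrD$-side object $\{A_i\}$ (with the dualized/transposed arrows), which I would handle exactly as in the paragraph following the proof of the $\Rep(\bfG)$-equivalence.
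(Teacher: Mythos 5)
Your overall architecture matches the paper's (structured tensor product $\{A_i\}\mapsto -\odot\mathcal{K}(\{A_i\})$ in one direction, evaluation on distinguished modules in the other), but the extraction step uses the wrong distinguished modules, and this is not a cosmetic indexing issue. You set $A_i=\Phi(\bfQ_{\tau(e_i)})$, a value on a principal \emph{projective} of $\Mod_\scrD$, and then claim $\bfQ_{\tau(e_i)}\odot\mathcal{K}=\mathcal{K}(e_i)=A_i$. This is false: $(\bfQ_{\underline{a}})^\vee=\bfI_{\underline{a}}$, so $\bfQ_{\underline{a}}\odot\mathcal{K}=\Hom(\bfI_{\underline{a}},\mathcal{K})$, and Lemma~\ref{lem:structured hom prin proj} does not apply --- it is the principal \emph{injectives} $\bfJ_{\underline{a}}=(\P_{\underline{a}})^\vee$ of $\Mod_\scrD$ that satisfy $\bfJ_{\underline{a}}\odot\mathcal{K}=\Hom(\P_{\underline{a}},\mathcal{K})=\mathcal{K}(\underline{a})$. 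Concretely, by the argument of Lemma~\ref{lem:structured hom prin inj}, $\bfQ_{e_i}\odot\mathcal{K}(\{A_j\})$ computes a kernel of a map out of $A_1$ (in the model, $\bfV_i$ rather than $\mathscr{T}(e_i)=\bfV/\bfV_{i-1}$), not $A_i$. With your extraction the round trip $\{A_i\}\mapsto\mathcal{K}(\{A_i\})\mapsto\{\text{extracted values}\}$ does not return $\{A_i\}$, so one of the two triangle identities fails. The paper instead defines the extraction by $A_i=F(\bfJ_{e_i})$ with the natural maps $\bfJ_{e_1}\to\cdots\to\bfJ_{e_n}$, for which the round trip is immediate.

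Second, the direction you yourself flag as the main obstacle --- showing that an arbitrary left-exact tensor functor $F$ is recovered as $-\odot\mathcal{K}(\{A_i\})$ from its extracted data --- is left as a list of candidate strategies (presentations by principal projectives, coreflexive equalizers, citing Corollary 2.1.12 of Sam--Snowden) rather than a proof, and none of these is the route the paper takes. The paper's argument is short but has specific content: monoidality of $F$ gives $\mathcal{K}(\{F(\bfJ_{e_i})\})=F(\mathcal{K}(\{\bfJ_{e_i}\}))$ termwise; left-exactness lets $F$ pass through the finite inverse limit defining $\odot$, yielding $N\odot\mathcal{K}(\{F(\bfJ_{e_i})\})=F\bigl(N\odot\mathcal{K}(\{\bfJ_{e_i}\})\bigr)$; and a Yoneda computation, $\Hom_\scrD(M,\mathcal{K}(\{\bfJ_{e_i}\})(\underline{a}))=\Hom_\scrD(M,\bfJ_{\underline{a}})=M^\vee(\underline{a})$, shows $N\odot\mathcal{K}(\{\bfJ_{e_i}\})=N$. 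Both this argument and the corrected choice of distinguished objects need to be supplied for the proof to go through.
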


\begin{proof}
    Let $\text{LEx}(\Mod_\scrD,\mathscr{C})$ denote the category of left-exact tensor functors $\Mod_\scrD\to\mathscr{C}$. We have a functor $\Phi:\text{LEx}(\Mod_\scrD,\mathscr{C})\to T(\mathscr{C})$ defined as follows. Let $F\in\text{LEx}(\Mod_\scrD,\mathscr{C})$. Recall that there are natural maps $\bfJ_{e_1}\to\bfJ_{e_2}\to\cdots\to\bfJ_{e_n}$, where $e_i$ denotes the $n$-tuple with $1$ in the $i^{\text{th}}$ position and $0$'s elsewhere.
    Then $\{F(\bfJ_{e_{i}})\}$ gives an object in $T(\mathscr{C})$. We also have a functor $\Psi:T(\mathscr{C})\to\text{LEx}(\Mod_\scrD,\mathscr{C})$ given by
    \[\{A_i\}\mapsto (N\mapsto N\odot \mathcal{K}(\{A_i\}).\]
    By the construction of the structured tensor product, it gives a left-exact tensor functor. We show that $\Phi$ and $\Psi$ are mutually quasi-inverse equivalences.

    Let $\{A_i\}$ be an object of $T(\mathscr{C})$. Then applying the functor $\Psi(\{A_i\})$ to $\bfJ_{e_i}$, we obtain 
    \[\bfJ_{e_i}\mapsto\bfJ_{e_i}\odot\mathcal{K}(\{A_i\})=\Hom(\P_{e_i},\mathcal{K}(\{A_i\}))=A_i.\]
    Thus, $\Phi\Psi(\{A_i\})=\{A_i\}$, and so the natural morphism $\id\to\Phi\Psi$ is an equality.

    Now suppose $F\in\text{LEx}(\Mod_\scrD,\mathscr{C})$. Applying $\Psi\Phi$, we obtain the functor $\Mod_{\scrD}\to\mathscr{C}$ given by
    \[N\mapsto N\odot\mathcal{K}(\{F(\bfJ_{e_{i}})\})=N\odot F(\mathcal{K}(\{\bfJ_{e_{i}}\}))=F(N\odot\mathcal{K}(\{\bfJ_{e_{i}}\})),\]
    where the first equality follows from $F$ being a tensor functor, and the second equality comes from $F$ being left-exact and $\odot$ being an inverse limit. It therefore suffices to show that $N\odot\mathcal{K}(\{\bfJ_{e_i}\})=N$.

    Let $M$ be a $\scrD$-module. Then
    \[\Hom_\scrD(M,N\odot\mathcal{K}(\{\bfJ_{e_{i}}\})=\Hom_\scrD(M,\Hom(N^\vee,\mathcal{K}(\{\bfJ_{e_{i}}\})))=\Hom_\scrU(N^\vee,\Hom_\scrD(M,\mathcal{K}(\{\bfJ_{e_{i}}\})));\]
    the second equality comes from the structured Hom being an adjoint functor, as described in \S\ref{subsec:structured hom}.
    We have that
    \[\Hom_\scrD(M,\mathcal{K}(\{\bfJ_{e_i}\})(\underline{a}))=\Hom_\scrD(M,\bfJ_{\underline{a}})=M^\vee(\underline{a}).\]
    In particular, $\Hom_\scrD(M,\mathcal{K}(\{\bfJ_{e_i}\}))=M^\vee$ as $\scrU$-modules, and so
    \[\Hom_\scrD(M,N\odot\mathcal{K}(\{\bfJ_{e_i}\}))=\Hom_\scrU(N^\vee,M^\vee)=\Hom_\scrD(M,N).\]
    Thus, we see that $N\odot\mathcal{K}(\{\bfJ_{e_i}\})=N$, and this completes the proof.
\end{proof}

\bibliographystyle{alpha}
\bibliography{references}

\end{document}